\newcommand{\nl}{\hfil\break}
\newcommand{\co}{\colon\thinspace}
\DeclareMathOperator{\tr}{Tr}
\DeclareMathOperator{\Iso}{Iso}
\DeclareMathOperator{\Cob}{Cob}
\DeclareMathOperator{\PCob}{Pre-Cob}
\DeclareMathOperator{\Kom}{Kom}
\DeclareMathOperator{\Mat}{Mat}
\DeclareMathOperator{\Morph}{Hom}
\DeclareMathOperator{\Cone}{Cone}
\DeclareMathOperator{\Inv}{Inv}
\DeclareMathOperator{\Obj}{Ob}
\DeclareMathOperator{\Univ}{U}
\DeclareMathOperator{\TL}{TL}
\DeclareMathOperator{\KF}{\FK}
\DeclareMathOperator{\CFK}{CFK}
\DeclareMathOperator{\FK}{FK}
\DeclareMathOperator{\SU}{SU}
\DeclareMathOperator{\Tot}{Tot}
\DeclareMathOperator{\Ktheory}{K}
\DeclareMathOperator{\Homology}{H}
\newcommand{\inp}[1]{\ensuremath{\langle #1 \rangle}}
\newcommand{\Mb}[1]{\ensuremath{\mathbb{#1}}}
\newcommand{\normaltext}[1]{\textnormal{#1}}
\newcommand{\quantsu}[0]{\Univ_q\mathfrak{sl}(2)}
\newcommand{\CPic}[1]{
\begin{minipage}{.45in}
\includegraphics[scale=.75]{#1}
\end{minipage}
}
\newcommand{\MPic}[1]{
\begin{minipage}{.35in}
\includegraphics[scale=.45]{#1}
\end{minipage}
}
\newcommand{\BPic}[1]{
\begin{minipage}{1in}
\includegraphics[scale=1.5]{#1}
\end{minipage}
}
\newcommand{\SPic}[1]{
\begin{minipage}{.75in}
\includegraphics[scale=1.15]{#1}
\end{minipage}
}
\theoremstyle{plain}
\newtheorem{theorem}[subsection]{Theorem}
\newtheorem{proposition}[subsection]{Proposition}
\newtheorem{corollary}[subsection]{Corollary}
\newtheorem{lemma}[subsection]{Lemma}
\theoremstyle{remark}
\newtheorem{example}[subsection]{Example}
\newtheorem*{remark}{Remark}
\theoremstyle{definition}
\newtheorem{definition}[subsection]{Definition}
\begin{document}
\title{Categorification of the Jones-Wenzl Projectors}

\author[Benjamin Cooper and Vyacheslav Krushkal]{Benjamin Cooper and Vyacheslav Krushkal$^{^*}$}

\thanks{$^{^*}$Partially supported by the NSF and by the I.H.E.S}

\address{Department of Mathematics, University of Virginia, Charlottesville, VA 22904}
\email{bjc4n\char 64 virginia.edu, krushkal\char 64 virginia.edu}

\begin{abstract}
  The Jones-Wenzl projectors $p_n$ play a central role in quantum topology,
  underlying the construction of $\SU(2)$ topological quantum field theories
  and quantum spin networks.  We construct chain complexes ${P}_n$, whose
  graded Euler characteristic is the ``classical'' projector $p_n$ in the
  Temperley-Lieb algebra. We show that the ${P}_n$ are idempotents and
  uniquely defined up to homotopy.  Our results fit within the general
  framework of Khovanov's categorification of the Jones polynomial.
  Consequences of our construction include families of knot invariants
  corresponding to higher representations of $\quantsu$ and a
  categorification of quantum spin networks. We introduce $6j$-symbols in
  this context.
\end{abstract}

\maketitle

\section{Introduction}

In \cite{MR1740682} Mikhail Khovanov introduced a categorification of the
Jones polynomial, giving rise to a new conceptual framework for quantum
invariants of links in the $3$-sphere.  The results in \cite{MR1740682} fit
in the context of categorification of the Temperley-Lieb algebra
\cite{MR1928174}, \cite{MR2174270} (also \cite{MR2726291},
\cite{MR2120117}).  Roughly speaking, categorification associates to an
algebra $A$ a category ${\mathcal C}$ whose Grothendieck group
$\Ktheory_0({\mathcal C})$ is isomorphic to $A$. Moreover, multiplication by
generators of $A$ gives rise to functors acting on ${\mathcal C}$ and
satisfying natural properties \cite{MR2559652}.  An extension from planar
Temperley-Lieb diagrams to tangles is achieved by passing from additive to
triangulated categories. The resulting link homology theory satisfies
functoriality under surface cobordisms in $4$-space, an important feature
that was not apparent at the level of its graded Euler characteristic, the
Jones polynomial.

An important open problem in the subject is to extend known
categorifications from links in the $3$-sphere to quantum invariants of
$3$-manifolds.  The constructions of the $\SU(2)$ quantum invariants by
Reshetikhin-Turaev \cite{MR1292673} and Turaev-Viro \cite{MR1191386} rely on
the Jones-Wenzl projectors $p_n$ \cite{MR1622916, MR873400}, certain special
elements of the Temperley-Lieb algebra. In the Reshetikhin-Turaev theory,
one uses the Jones-Wenzl projectors to label the components of the link in a
surgery presentation of the $3$-manifold. In the Turaev-Viro approach, a
triangulation of the $3$-manifold is assigned a state sum involving the
$6j$-symbols, an important ingredient in the theory of quantum spin
networks.  (An additional key feature of the $3$-manifold invariants,
closely related to the properties of the Jones-Wenzl projectors, is that the
``quantum'' parameter $q$ has to be specialized to a root of unity in order
to get a semisimple theory).

The main goal of this paper is to introduce a categorification of the
Jones-Wenzl projectors.  The Temperley-Lieb algebra $\TL_n$ is an algebra
over ${\mathbb Z}[q,q^{-1}]$, additively generated by planar diagrams
connecting $n$ points at the top and at the bottom of a rectangle and the
multiplication is defined on generators by vertical stacking of such
diagrams (see section \ref{TL section} below for more details).
The Jones-Wenzl projector $p_n$ is an idempotent element of $\TL_n$, uniquely
characterized by the following two properties: (1) the coefficient of the
unit element, corresponding to $n$ vertical strands, in the expression for
$p_n$ is $1$ and (2) $p_n$ is ``killed by turnbacks'', that is $p_n D=D
p_n=0$ where $D$ is any planar diagram generator of $\TL_n$ other than the
unit element.

Note that unlike the Jones polynomial and various other link invariants that
have been previously categorified, the coefficients in the expansion of
$p_n$ in terms of the generators of $\TL_n$ are {\em rational}, rather than
polynomial, functions of $q, q^{-1}$. This suggests that categorification of
the projectors cannot be achieved by chain complexes of finite length.

We use Bar-Natan's formulation of Khovanov's theory: the objects in this
category are the Temperley-Lieb diagrams and morphisms are surface
cobordisms in $3$-space between such diagrams, see \cite{MR2174270} and
section \ref{TL section} below.  In this framework, for each $n$ we
construct a chain complex ${P}_n$ whose graded Euler characteristic is the
formal power series corresponding to $p_n$. For example, the power series
for $n=2$ is
\begin{equation} \label{p2}
\MPic{p2box} = \MPic{n2-1} \hspace{-.14in}- \frac{1}{q+q^{-1}}\MPic{n2-s}\hspace{-.05in} = \MPic{n2-1} \hspace{-.14in}+ \sum_{i=1}^{\infty}(-1)^i q^{2i-1}\MPic{n2-s}
\end{equation}
We show that the chain complexes ${P}_n$ are uniquely characterized up to
homotopy by properties analogous to those of the Jones-Wenzl projectors
$p_n\in \TL_n$: (1) the identity diagram appears in the chain complex
${P}_n$ only once, in degree zero and (2) ${P}_n$ is contractible ``under
turnbacks'', see the definition of a {\em universal projector} and theorem
\ref{main theorem} in section \ref{main theorem section}. It follows from
these properties that ${P}_n$ is a ``homotopy idempotent'':
${P}_n\otimes{P}_n\simeq {P}_n$. We write down the chain complexes
explicitly for $n=2,3$, see section \ref{formulasec}.  The main technical
part of the paper is the inductive construction of the chain complex ${P}_n$
for larger $n$ in section \ref{proof section}, modeled on the
Frenkel-Khovanov recursion \cite{MR1446615} for the Jones-Wenzl projectors.
The universality properties satisfied by ${P}_n$ and the invariance under
Reidemeister moves, discussed further below, suggest the naturality of the
construction proposed in this paper. We summarize the discussion so far with
the table below.

{\center{\begin{tabular}   {@{\extracolsep{20pt}}ll}
\underline{Algebra}. & \underline{Category}. \\
Temperley - Lieb algebra: $\TL_n$ & Bar-Natan - Khovanov Category: $\Kom(n)$ \\
$p_n \in \TL_n$ & $P_n \in \Kom(n)$, $K_0(P_n)=p_n$ \\
$p_n \cdot p_n = p_n$ &  $P_n\otimes P_n \simeq P_n$ \\
$p_n$ is unique & $P_n$ is unique up to homotopy \\
\end{tabular}}}

An immediate consequence of our construction is a categorification of
quantum spin networks. That is, to a spin network $G$ we associate a chain
complex whose graded Euler characteristic is a Laurent series in $q$
corresponding to the quantum evaluation of $G$.  Some interesting phenomena
are observed here. In the simplest example the rational homology of the
trace of the second projector, $\tr({P}_2)$ has the expected graded Euler
characteristic $[3]=q^{-2}+1+q^2$, but the homology itself has infinite rank
(with extra generators canceling in pairs in the Euler characteristic).
Further, there is $2$-torsion when the homology is taken with integer
coefficients, see \ref{homology subsection}. In section \ref{6j section} we
formulate a categorified analogue of the $6j$-symbols. It takes the form of
an iterated cone construction, giving rise to a ``homotopy change of basis''
in the category of chain complexes.

Our construction also gives rise to an invariant of tangles, leading to a
categorification of the colored Jones polynomial, see section
\ref{Reidemeister section}. Note that the included computations imply that
our work is different from the previously defined categorification of the
colored Jones polynomial \cite{MR2124557} (see also \cite{MR2462446}).  See
\ref{second projector section} for further discussion.

We would like to mention that while preparing this manuscript for
publication, during the MSRI workshop ``Homology Theories of Knots and
Links'' in March 2010 we learned that an alternative,
representation-theoretic, approach to categorifying the Jones-Wenzl
projectors has been pursued by Igor Frenkel, Catharina Stroppel and Joshua
Sussan \cite{FSS}. In light of the universality properties of our
construction (see section 3), it is plausible that the two approaches are
equivalent, although the methods are quite different.  One advantage of
working in Khovanov's and Bar-Natan's framework for categorification of the
Temperley-Lieb algebra is that our construction of the categorified
projectors is explicit and it is readily available for topological
applications. The interested reader may want to compare our construction in
section \ref{6j section} to the discussion of the $6j$-symbols in
\cite[Section 17]{FSS}.

We would like to add that more recently Lev Rozansky \cite{Roz} has proposed
an elegant idea on categorification of the Jones-Wenzl projectors, based on
the properties of the infinite torus braid. Our construction is based on the
Frenkel-Khovanov recursive formula, however it seems reasonable to believe
that the two approaches may be related (and more generally the universality
properties satisfied by the projectors imply that the different
constructions are homotopy equivalent).

{\bf Acknowledgements}. The authors would like to thank the referee for a
number of suggestions which have led to a better exposition.

\section{The Temperley-Lieb Algebra and the Jones-Wenzl Projectors} \label{TL section}

This section summarizes the relevant background on the definition and
categorification of the Temperley-Lieb algebra. Section \ref{homotopy
  lemmas} states a version of the Gaussian elimination lemma which will be
used throughout the paper.

\subsection{Temperley-Lieb Algebra} The Temperley-Lieb algebra is the unital
$\mathbb{Z}[q,q^{-1}]$-algebra of $\quantsu$-equivariant maps between
$n$-fold tensor powers of the fundamental representation $V$,
$$ \TL_n = \Morph_{\quantsu}(V^{\otimes n},V^{\otimes n}). $$

There is an explicit presentation given by the standard generators $1$ and
$e_i$, $0 < i < n$, satisfying the relations:

\begin{enumerate}
\item $e_i e_j = e_j e_i$ if $|i-j| \geq 2$.
\item $e_i e_{i\pm 1} e_i = e_i$
\item $e_i^2 = [2] e_i$
\end{enumerate}

where the quantum integer $[n]$ is defined to be

$$[n] = \frac{q^n - q^{-n}}{q-q^{-1}} = q^{-(n-1)} + q^{-(n-3)} +
\cdots +  q^{n-3} + q^{n-1}$$

Each generator $e_i$ can be pictured as a diagram consisting of $n$ chords
between two collections of $n$ points on two horizontal lines in the plane.
All strands are vertical except for two, connecting the $i$th and the
$(i+1)$-st points in each collection. For instance, when $n = 3$ we have the
following diagrams,

$$1 = \CPic{n3-1}\,\,,\quad e_1 = \CPic{n3-e1} \textnormal{\quad  and\quad   } e_2 = \CPic{n3-e2}. $$

The multiplication is given by vertical composition of diagrams, and planar
isotopy induces relations (1) and (2) between the generators above. The third
relation says that any circles which are created may be removed at the cost
of multiplication by $[2]= q+q^{-1}$.

This algebra is well-known in low-dimensional topology in particular due to
its natural extension from planar diagrams to tangles, captured by the
Kauffman bracket relations:

$$\begin{diagram} \CPic{n2-orcross} \hspace{-.3in} & = & q \CPic{n2-1}\hspace{-.15in} & - & \hspace{.2in} q^{2} \CPic{n2-s}\hspace{-.15in} & \normaltext{\quad\,\, and } &
\CPic{n2-orcross-2}\hspace{-.3in} & = & q^{-2} \CPic{n2-s} & - & \hspace{.2in} q^{-1}\CPic{n2-1}
\end{diagram}
$$

which yield the Jones Polynomial up to normalization \cite{MR1280463, MR1622916}.

\subsection{Jones-Wenzl Projectors} \label{JW projectors} The Jones-Wenzl Projectors $p_n
\in \TL_n$ are idempotent elements of the Temperley-Lieb algebra which have
proven to be fundamental to its study and applications. The projectors
appear in the study of spin networks or the graphical calculus of higher
$\quantsu$ representations, the colored Jones polynomial and many
constructions of Chern-Simons theory
\cite{MR1280463,MR1191386,MR1292673,MR1362791,MR1797619, Walker91}.

The projectors were originally \cite{MR873400} defined by the recurrence
relation,
\begin{align*}
p_1 &= 1\\
p_n &= p_{n-1} - \frac{[n-1]}{[n]} p_{n-1} e_{n-1} p_{n-1}
\end{align*}

If we depict $p_n$ graphically by a box with $n$ incoming and outgoing
chords:
$$p_n = \BPic{pn}$$
then the formula may be illustrated as follows:
$$\BPic{pn} = \BPic{pnm1parjw} -\quad \frac{[n-1]}{[n]}\! \BPic{pnm1sad}$$

It can be shown that the Jones-Wenzl projectors are uniquely characterized by
the following properties:

\begin{enumerate}
\item $p_n \in \TL_n$ considered as a $\mathbb{Z}[q^{-1}]\llbracket q \rrbracket$-algebra.
\item $p_n - 1$ belongs to the subalgebra generated by
  $\{e_1,e_2,\ldots,e_{n-1}\}$
\item $e_i p_n = p_n e_i = 0$ for all $i = 1, \ldots, n-1$.
\end{enumerate}

See for instance \cite{MR1280463, MR1472978}.
The coefficients of Temperley-Lieb diagrams in the expression for $p_n$
uniquely determine power series with positive powers of $q$. The equations
above then define $p_n$ as a power series in Temperley-Lieb elements, for
example the power series for $p_2$ is given in (\ref{p2}) in the
introduction. (Alternatively, one could expand rational functions as series
in $q^{-1}$ producing a dual projector, see discussion following definition
\ref{universal projector def}.)

\subsection{Categorification of the Temperley-Lieb algebra} \label{categorified TL section}

Work by a number of authors on the existence of integral bases in Lie theory
led to a categorification of the Temperley-Lieb algebra by Mikhail Khovanov
in which integer coefficients were interpreted as the dimensions of graded
vector spaces and polynomials as graded Euler characteristics
\cite{MR1446615, MR1714141, MR1740682, MR1928174, MR2120117, MR2305608}. This
construction extends to tangles and there is a corresponding functoriality
with respect to cobordisms between these tangles \cite{MR2113903,MR2174270}.

In this section we recall Dror Bar-Natan's graphical formulation
\cite{MR2174270} of the Khovanov categorification. It will be used
throughout the remainder of this paper. Using the Bar-Natan formulation has
the advantage of allowing our constructions to apply to a number of variant
categorifications which exist in the literature.

\begin{definition}\label{groth}
If $\mathcal{C}$ is an additive category then the \emph{split Grothendieck
  group} of $\mathcal{C}$ is
$$K_0(\mathcal{C}) = \mathbb{Z}\inp{\Iso(\mathcal{C})} / ([A \oplus B] = [A] + [B]),$$

the free abelian group generated by isomorphism classes of objects in
$\mathcal{C}$ modulo the relation above. If $\mathcal{C}$ is a monoidal
category then the map $\mathcal{C} \otimes \mathcal{C} \to \mathcal{C}$
induces $K_0(\mathcal{C})\otimes K_0(\mathcal{C}) \to K_0(\mathcal{C})$,
endowing $K_0(\mathcal{C})$ with an algebra structure. See \cite{WeibelK, MR2559652}.
\end{definition}

Our goal is to define an additive monoidal category $\Cob(n)$ such that
$K_0(\Cob(n)) \cong \TL_n$. There is an additive category $\PCob(n)$ whose
objects are isotopy classes of formally $q$-graded Temperley-Lieb diagrams
with $2n$ boundary points. The morphisms are given by the free
$\mathbb{Z}$-module spanned by isotopy classes of orientable cobordisms
bounded in $\mathbb{R}^3$ between any two planes containing such diagrams.

The \emph{degree} of a cobordism $C : q^i A \to q^j B$ is given by

$$\deg(C) = \deg_t(C) + \deg_q(C)$$

where the topological degree $\deg_t(C) = \chi(C) - n$ is given by the Euler
characteristic of $C$ and the $q$-degree $\deg_q(C) = j - i$ is given by the
relative difference in $q$-gradings.  The maps $C$ used throughout the paper
will satisfy $\deg(C) = 0$. The formal $q$-grading will be chosen to cancel
the topological grading. When working with chain complexes every object will
also contain a homological grading and every map will have an associated
homological degree. Homological degree is not part of the definition
$\deg(C)$.

It has become a common notational shorthand to represent a handle by a dot
and a saddle by a flattened diagram containing a dark line.
$$\BPic{sheetwithhandle}\, =\! 2\!\! \BPic{sheetwithdot} = 2\!\! \BPic{dottedline} \textnormal{ and } \BPic{saddle}\,\, =\!\! \BPic{n2-1-sad}$$

(The topological degrees of the cobordisms above are $-2$, $-1$ respectively.)
We would like a category $\mathcal{C}$ such that $\Ktheory_0(\mathcal{C})
\cong \TL_n$ so we require that the object represented by a closed circle be
isomorphic to sum of two empty objects in degrees $\pm 1$ respectively. If
such maps are to be degree preserving then the most natural choice for these
maps is given below.
$$\begin{diagram}
\varphi\!: \hspace{-.15in}
&\CPic{circle}\hspace{.1in} & \pile{\rTo^{\left( \CPic{cap-circ-to-null} \CPic{torus-circ-to-null} \right)  }\\
\lTo_{\left( \CPic{torus-null-to-circ} \CPic{cap-null-to-circ} \right)  }}
& \hspace{.1in} q^{-1}\, \emptyset\,\, \oplus\,\, q\, \emptyset &\hspace{.15in} : \! \psi
\end{diagram}
$$

In order to obtain $\varphi \circ \psi = 1$ and $\psi \circ \varphi = 1$ we
form a new category $\Cob(n) = \Cob^3_{\cdot/l}(n)$ obtained as a quotient
of the category $\PCob(n)$ by the relations given below.
 $$\CPic{sphere} = 0 \hspace{.75in} \CPic{spheredot} = 1 \hspace{.75in} \CPic{spheredotdot} = 0 \hspace{.75in} \CPic{spheredotdotdot} = \alpha$$
 $$\CPic{cyl} = \CPic{cut1} + \CPic{cut2}$$

The cylinder or neck cutting relation implies that closed surfaces
$\Sigma_g$ of genus $g > 3$ evaluates to $0$. In what follows we will
let $\alpha$ be a free variable and absorb it into our base ring ($\Sigma_3=
8\alpha$). One can think of $\alpha$ as a deformation parameter, see \cite{MR2174270} for further details.

\begin{example}{(The Circle)}\label{thecircle}
In both $\PCob(0)$ and $\Cob(0)$ there are objects associated to the circle
and the empty set. Consider
$$A = \Morph_{\PCob(0)}(\emptyset, \MPic{circle}\!\!) \normaltext{\quad and \quad} B = \Morph_{\Cob(0)}(\emptyset, \MPic{circle}\!\!).$$

Both $A$ and $B$ are abelian groups. An element of $A$ consists of a linear
combination of isotopy classes of orientable surfaces with a single fixed
boundary circle. An element of $B$ consists of a linear combination of such
surfaces subject to the relations above. In particular, the last relation
allows us to cut any surface along a closed curve which bounds a disk in
3-space. The reader can check that every element $x \in B$ is of the form:
$$ x = m \CPic{torus-null-to-circ}\! \! + n \CPic{cap-null-to-circ}$$

for some $m, n$.
$\Morph_{\Cob(0)}(\emptyset, \MPic{circle}\!\!)$ can be endowed with a Frobenius algebra structure using maps induced by cobordisms.
When ${\alpha}=0$ this is the Frobenius algebra which appears in Khovanov's original construction, see \cite{MR2232858}.
\end{example}

Given two objects $C,D \in \Cob(n)$ we will use $C\otimes D$ to denote the
map $\Cob(n) \otimes
\Cob(n) \to \Cob(n)$ obtained by gluing all diagrams and morphisms along the
$n$ boundary points and $n$ boundary intervals respectively. Pictorially,
$$C\otimes D = \BPic{compsquareCD}$$

\begin{lemma}
There is an isomorphism of $\mathbb{Z}[q,q^{-1}]$-algebras,

$$K_0(\Cob(n)) \cong \TL_n$$
\end{lemma}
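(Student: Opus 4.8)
The plan is to exhibit an explicit inverse map and verify it is a well-defined algebra isomorphism. First I would define a homomorphism $\phi\co \TL_n \to K_0(\Cob(n))$ on generators by sending the unit $1$ to the class of the identity diagram (with trivial $q$-shift) and $e_i$ to the class of the Temperley-Lieb diagram that realizes $e_i$. To see $\phi$ is well-defined one must check that the three defining relations of $\TL_n$ hold in $K_0(\Cob(n))$: the far-commutativity $e_ie_j = e_je_i$ for $|i-j|\ge 2$ and the relation $e_ie_{i\pm1}e_i = e_i$ hold already at the level of diagrams up to planar isotopy, hence in $\PCob(n)$ and a fortiori in $\Cob(n)$; and the relation $e_i^2 = [2]e_i$ follows because stacking $e_i$ on itself produces the diagram $e_i$ together with a disjoint closed circle, and the isomorphism $\varphi\co \MPic{circle} \xrightarrow{\sim} q^{-1}\emptyset \oplus q\,\emptyset$ constructed above (valid in $\Cob(n)$ after imposing the sphere and neck-cutting relations, which force $\varphi\circ\psi = 1$ and $\psi\circ\varphi = 1$) shows that in $K_0$ the class of a diagram-with-extra-circle equals $(q+q^{-1})=[2]$ times the class of the diagram without it. Since $\mathbb{Z}[q,q^{-1}]$ acts by $q$-degree shifts and $K_0$ is $\mathbb{Z}[q,q^{-1}]$-linear by definition, $\phi$ is an algebra map.

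Next I would construct the inverse. Every object of $\Cob(n)$ is, up to $q$-shift, a finite direct sum of Temperley-Lieb diagrams, and every Temperley-Lieb diagram with $2n$ boundary points is—after removing any closed circles at the cost of factors of $[2]$, i.e. identifying $\MPic{circle}$ with $q^{-1}\emptyset\oplus q\,\emptyset$—planar isotopic to one of the standard crossingless matchings, which in turn is a word in the $e_i$. Thus sending the class $[q^i D]$ of a diagram $D$ to $q^i$ times the corresponding element of $\TL_n$ and extending additively gives a well-defined map $\psi\co K_0(\Cob(n))\to \TL_n$: one must check it respects the relation $[A\oplus B]=[A]+[B]$ (immediate) and is independent of the presentation of an object as a sum of diagrams. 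This last point is where the real content lies and is the main obstacle.

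The crux is to show that $\Cob(n)$ is a Krull-Schmidt category whose indecomposable objects, up to isomorphism and $q$-shift, are exactly the crossingless matchings, so that $K_0(\Cob(n))$ is freely generated over $\mathbb{Z}[q,q^{-1}]$ by these matchings—matching the free $\mathbb{Z}[q,q^{-1}]$-basis of $\TL_n$ given by crossingless matchings. This is precisely the content of Bar-Natan's and Khovanov's analysis of the cobordism category: one computes the graded endomorphism ring of each crossingless matching diagram $D$ (using neck-cutting to reduce any cobordism $D\to D$ to a $\mathbb{Z}[\alpha]$-linear combination of products of dotted tubes and disks capping off the circles of $D\sqcup D$), finds that $\Morph_{\Cob(n)}^0(D,D)$ has no idempotents other than $0,1$, and that $\Morph_{\Cob(n)}^0(D,D')=0$ in degree zero for non-isotopic $D\ne D'$; this forces indecomposability and rigidity of the matchings. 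Then any two decompositions of an object into indecomposables agree, so $\psi$ is well-defined. Finally $\phi$ and $\psi$ are mutually inverse by construction: $\psi\circ\phi$ is the identity on generators, and $\phi\circ\psi$ sends each generating class $[D]$ back to itself. I would cite \cite{MR2174270} and \cite{MR1740682} for the endomorphism-ring computations rather than reproving them, noting that the only new feature, the deformation parameter $\alpha$, does not affect the Krull-Schmidt structure since it lives in degree zero and is central.
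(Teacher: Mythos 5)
Your proof is correct and fills in precisely the content that the paper leaves implicit: the published ``proof'' of this lemma is a one-line deferral to ``the construction above'' (and, implicitly, to Bar-Natan's and Khovanov's analysis of the cobordism category), whereas you spell out the standard argument. You correctly identify the real crux as the Krull--Schmidt structure of $\Cob(n)$ --- the fact that the degree-zero endomorphism ring of each crossingless matching has no nontrivial idempotents and that there are no degree-zero morphisms between distinct matchings, which is what makes the split Grothendieck group free over $\mathbb{Z}[q,q^{-1}]$ on the matchings and hence makes your inverse $\psi$ well-defined. Your generator-relation check for $\phi$, including the delooping identification giving $[e_i^2] = [2][e_i]$, is also exactly right. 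The one presentational caveat is that, as stated in the paper, the objects of $\Cob(n)$ are single ($q$-shifted) diagrams rather than formal direct sums, so the Krull--Schmidt argument should strictly be phrased in the additive closure $\Mat(\Cob(n))$ (which is what $K_0$ of this sort of category implicitly measures) --- but this is a harmless convention that the paper itself glosses over, and your proof is otherwise complete and in line with the intended one.
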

\begin{proof}
Note that the $q$-degree shifting functor determines an endomorphism
$K_0(q) : K_0(\Cob(n)) \to K_0(\Cob(n))$ making $K_0(\Cob(n))$ into a
$\mathbb{Z}[q,q^{-1}]$-algebra. The proof follows directly from the
construction above.
\end{proof}

The categorifications $\Cob(n)$ fit together in much the same way as the
Temperley-Lieb algebras $\TL_n$. There is an inclusion $ - \sqcup 1^{m-n} :
\Cob(n) \to \Cob(m)$ whenever $n\leq m$ obtained by unioning each diagram
with $m-n$ disjoint vertical line segments on the right to each object and $m-n$ disjoint
disks to each morphism. If $m=n$ then the empty set is used instead of
either intervals or disks.

\begin{definition}\label{kom def}
  Let $\Kom(n) = \Kom(\Mat(\Cob^3_{\cdot/l}(n)))$ be the category of chain
  complexes of finite direct sums of objects in $\Cob^3_{\cdot/l}(n)$.  We
  allow chain complexes $K_*$ of unbounded positive homological degree and
  require that for each $K_*$ there exists an $N \in \mathbb{Z}_-$ such that $K_n = 0$ for
  $n < N$.
\end{definition}

Note that $\otimes$ extends to $\Kom(n)$, see \cite{gelfandmanin}. The
functor $- \sqcup 1^{m-n}$ extends to $\Kom(n)$ in the obvious way. The
skein relation becomes
$$\begin{diagram} \CPic{n2-orcross} \hspace{-.3in} & = & q \CPic{n2-1-u}\hspace{-.15in} &\rTo^{\!\MPic{n2-1-sad}} & \hspace{.2in} q^{2} \CPic{n2-s}\hspace{-.15in} & \normaltext{\quad\,\, and } &
\CPic{n2-orcross-2}\hspace{-.3in} & = & q^{-2} \CPic{n2-s} & \rTo^{\!\MPic{n2-1-sad}} & \hspace{.2in} q^{-1}\CPic{n2-1-u}
\end{diagram}
$$

where the underlined diagram represents homological degree 0.

The skein relation allows us to associate to any tangle diagram $D$ with
$2n$ boundary points an object in $\Kom(n)$.

\subsubsection{Grothendieck group of $\Kom(n)$}\label{catdiscuss}

We have included this section in order to explain how the word
categorification pertains to the categories of partially unbounded chain
complexes appearing in this paper. What appears here represents only a minor
variation of the theory, see \cite{WeibelK}.

\begin{definition}
If $K$ is a finite sum of $q$-graded objects then define $\max_q(K)$ to be
the maximum $q$-degree of a summand of $K$. Define $\min_q(K)$ to be the
minimal $q$-degree of a summand of $K$.

Let $\Kom^0(n) \subset \Kom(n)$ be the subcategory consisting of chain
complexes $K_*$ such that the functions $f(i) = \max_q K_i$ and $g(j) =
\min_q K_j$ are both monotonically increasing.
\end{definition}

All of the constructions in this paper reside within this subcategory of
$\Kom(n)$. For example, the diagram in section \ref{fk sequence section},
which is important in the proof of the main theorem, has monotonically
increasing $q$-degree.  It follows from the definition of $\Kom^0(n)$ that
the $q$-graded Euler characteristic of any object $C \in \Kom^0(n)$ is
well-defined as a power series in $\mathbb{Z}[q^{-1}]\llbracket q
\rrbracket$.

\begin{definition}
The \emph{Grothendieck group} of $\Kom$ is
$$K_0(\Kom) = \mathbb{Z}\inp{\Iso(\Kom^0)} \Big/ \left([K_*] = \sum_{i=-\infty}^{\infty} (-1)^i [K_i]\right),$$

the free abelian group generated by isomorphism classes of objects in
$\Kom^0$ modulo the relation above. We again identify $[q^j K_i] = q^j
[K_i]$, and the monotonicity implies that these relations are well-defined
power series. Since the objects of $\Kom$ are bounded in negative
homological degree the sum above contains only finitely many terms with $i <
0$.  $K_0(\Kom)$ inherits an algebra structure as in definition \ref{groth}.
\end{definition}

\begin{lemma}
There is an isomorphism of $\mathbb{Z}[q^{-1}]\llbracket q \rrbracket$-algebras
$$K_0(\Kom(n)) \cong \TL_n.$$
\end{lemma}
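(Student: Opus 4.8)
The plan is to bootstrap from the already-established isomorphism $K_0(\Cob(n)) \cong \TL_n$ of $\mathbb{Z}[q,q^{-1}]$-algebras (the previous lemma), together with the general principle that passing to a (suitably bounded) homotopy category of chain complexes over an additive category does not change the Grothendieck group, only the coefficient ring in which Euler characteristics are recorded. Concretely, I would define a group homomorphism
$$
\chi \co K_0(\Kom(n)) \longrightarrow \TL_n \otimes_{\mathbb{Z}[q,q^{-1}]} \mathbb{Z}[q^{-1}]\llbracket q \rrbracket
$$
by sending the class of a chain complex $K_*$ to $\sum_i (-1)^i [K_i]$, where each $[K_i] \in K_0(\Cob(n))\cong \TL_n$. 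The monotonicity condition defining $\Kom^0(n)$ was arranged precisely so that this alternating sum converges as a power series in $\mathbb{Z}[q^{-1}]\llbracket q\rrbracket$, and the boundedness below in homological degree (Definition \ref{kom def}) ensures only finitely many negative terms, so $\chi$ is well-defined. One checks it respects the relation $[A\oplus B]=[A]+[B]$ and the grading-shift identification $[q^jK_i]=q^j[K_i]$ by definition, and that it is a ring map because $\otimes$ of chain complexes induces the tensor product of the underlying alternating sums (Künneth-type bookkeeping of signs), matching the algebra structure on $K_0(\Cob(n))$.

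First I would show $\chi$ is \emph{surjective}: every element of $\TL_n$, hence every element of the power-series completion with coefficients that are actual power series, is hit. An object of $\Cob(n)$ viewed as a chain complex concentrated in degree zero already realizes every class in $\TL_n\subset\TL_n\otimes\mathbb{Z}[q^{-1}]\llbracket q\rrbracket$; to get genuine infinite power series one uses unbounded-above complexes such as the one appearing in $(\ref{p2})$, whose Euler characteristic is $\MPic{n2-1} + \sum_{i\ge 1}(-1)^iq^{2i-1}\MPic{n2-s}$. Since $\TL_n$ is a free $\mathbb{Z}[q,q^{-1}]$-module on the Temperley-Lieb diagrams, and each diagram lifts to an object of $\Cob(n)$, one can build a complex with prescribed power-series coefficient in front of each diagram by stacking shifted copies of that diagram's object, yielding surjectivity.

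Next, \emph{injectivity}. Here the key is that $K_0$ only sees the graded dimensions of the underlying objects, so a complex $K_*$ and the ``formal direct sum'' $\bigoplus_i K_i[i]$ of its terms (with homological shifts turned into signs) have the same class; equivalently, the differential is invisible to $K_0$. Thus $K_0(\Kom(n))$ is generated by classes of objects-in-a-single-degree, i.e.\ by the image of $K_0(\Cob(n))$ together with the grading shift $q$ and the sign $-1$ coming from odd homological degree. One then argues that the only relations among these are the ones already present, using the freeness of $\TL_n$ as a $\mathbb{Z}[q,q^{-1}]$-module and the fact that a power series vanishes iff all its coefficients vanish; if $\chi(x)=0$ then, writing $x$ as a formal combination of diagram-objects with power-series coefficients, each coefficient must be zero, forcing $x=0$. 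Combining, $\chi$ is an isomorphism of $\mathbb{Z}[q^{-1}]\llbracket q\rrbracket$-algebras, and since $\TL_n\otimes_{\mathbb{Z}[q,q^{-1}]}\mathbb{Z}[q^{-1}]\llbracket q\rrbracket$ is what the statement means by "$\TL_n$ as a $\mathbb{Z}[q^{-1}]\llbracket q\rrbracket$-algebra" (cf.\ the convention in \ref{JW projectors}), we are done.

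The step I expect to be the genuine obstacle is injectivity, specifically making rigorous the claim that the differential contributes nothing to $K_0$ while simultaneously handling the \emph{infinite} complexes: one must be careful that the defining relation $[K_*]=\sum_i(-1)^i[K_i]$ in $K_0(\Kom)$ together with the monotonicity hypotheses of $\Kom^0(n)$ genuinely force the power-series identification to be an honest bijection rather than merely a surjection with some hidden kernel coming from rearrangement of infinite sums. I would address this by exploiting monotonicity to observe that, in any fixed $q$-degree, only finitely many homological degrees contribute, reducing each graded piece to the finite case where the result is standard; this "degreewise finiteness" is the technical heart and is exactly what the definition of $\Kom^0(n)$ buys us.
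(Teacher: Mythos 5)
The paper supplies no argument here --- it simply asserts ``The proof is standard'' --- so there is no written proof to compare against, but your outline is exactly the kind of bookkeeping the authors have in mind: transport the previous lemma's isomorphism $K_0(\Cob(n))\cong\TL_n$ through the alternating-sum map $\chi([K_*])=\sum_i(-1)^i[K_i]$ into the completed coefficient ring $\mathbb{Z}[q^{-1}]\llbracket q\rrbracket$, with surjectivity by explicit stacking and injectivity by degreewise reduction to the finite case. You have also correctly isolated the point that actually needs care, namely that the infinite alternating sum must be a genuine power series and that the defining relation cannot create a hidden kernel.

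One caveat worth tightening: the degreewise-finiteness claim (``in any fixed $q$-degree only finitely many homological degrees contribute'') does not literally follow from $g(j)=\min_q K_j$ being monotone nondecreasing, since a nondecreasing integer sequence can stabilize. What one actually needs is $g(j)\to\infty$. If one reads ``monotonically increasing'' as strictly increasing (an integer-valued strictly increasing sequence is unbounded), you get this for free. Otherwise one should appeal to the paper's earlier remark in section \ref{catdiscuss}: nonzero morphisms in $\Cob(n)$ have $\deg_t\le 0$, with $\deg_t=0$ only for scalar multiples of identities, so a degree-zero differential that is not essentially trivial forces the $q$-degree to \emph{strictly} increase between consecutive nonzero terms. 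Either way the gap is minor and closes along lines already signaled in the paper; your proposal is otherwise sound and, in my reading, coincides with the ``standard'' argument the authors decline to write out.
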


The proof is standard.

\begin{remark} The monotonicity condition imposed on
$\Kom(n)$ is very natural in our context.
Let $f : A \to B$ be a morphism in $\Cob(n)$, then one of the following holds:
\begin{enumerate}
\item $\deg_t(f) > 0$ and $f = 0$
\item $\deg_t(f) = 0$, $A=B$, and  $f = m 1_A$, for some $m \in \mathbb{Z}$.
\item $\deg_t(f) < 0$.
\end{enumerate}
\end{remark}

This follows from the classification of surfaces and the relations imposed
on surfaces in $\Cob(n)$. For instance, the only surface of positive Euler
characteristic is the sphere which evaluates to 0. We conclude that the
graded components of non-trivial chain complexes $K_*$ in $\Kom(n)$ with
degree 0 differential, as in definition \ref{universal projector def} below,
are monotonically increasing in the sense defined above.

\subsection{Chain Homotopy Lemmas} \label{homotopy lemmas}

We will make frequent use of the following standard lemma in this paper, see \cite{MR2174270,MR2457839}. For the definition of homotopy see \cite{gelfandmanin}.

\newcommand{\clxx}[2]{\ensuremath{\begin{array}{c} #1 \\ \oplus \\ #2 \end{array}}}
\newcommand{\matot}[2]{\ensuremath{\left(\begin{array}{c} #1 \\ #2 \end{array}\right)}}
\newcommand{\matto}[2]{\ensuremath{\left(\begin{array}{cc} #1 & #2 \end{array}\right)}}
\newcommand{\mattt}[4]{\ensuremath{\left(\begin{array}{cc} #1 & #2\\ #3 & #4 \end{array}\right)}}

\newcommand{\clxxx}[3]{\ensuremath{
\begin{array}{c} #1 \\ \oplus \\ #2 \\ \oplus \\ #3 \end{array}}}
\newcommand{\clxy}[2]{\ensuremath{
\begin{array}{c} #1 \\ \oplus \\ #2  \end{array}}}

\newcommand{\clxxABC}[1]{ \clxxx{A_#1}{B_#1}{C_#1}}
\newcommand{\clxyAC}[1]{ \clxy{A_#1}{C_#1}}

\begin{lemma}{(Gaussian Elimination)} \label{gaussian elimination}
  Let $K_*$ be a chain complex in an additive category $\mathcal A$
  containing a subcomplex isomorphic to the top row below. If $\varphi : B
  \to D$ is an isomorphism there is a homotopy equivalence from $K_*$ to a
  smaller complex containing the bottom row below.

$$\begin{diagram}
A &\rTo^{\matot{\cdot}{\alpha}} & \clxx{B}{C} & \rTo^{\mattt{\varphi}{\lambda}{\mu}{\eta}} & \clxx{D}{E} & \rTo^{\matto{\cdot}{\epsilon}} & F\\
\dTo & & \dTo & & \dTo & & \dTo   \\
A &\rTo^\alpha & C & \rTo^{\eta - \mu\varphi^{-1}\lambda} & E & \rTo^{\epsilon} & F
\end{diagram}$$
\end{lemma}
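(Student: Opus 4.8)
The plan is to construct the homotopy equivalence explicitly by writing down an isomorphism of chain complexes from $K_*$ onto a direct sum of the displayed smaller complex with a contractible ``acyclic'' piece $B \xrightarrow{\varphi} D$, and then discarding the contractible summand. First I would reduce the problem to the situation where the only nonzero entry of the map $\clxx{B}{C} \to \clxx{D}{E}$ in the $B$-to-$D$ slot is $\varphi$ and all of $\lambda, \mu$ are absorbed; this is achieved by a change of basis. Concretely, consider the automorphisms of the middle terms given in block form: on $\clxx{B}{C}$ use $\mattt{1}{0}{-\mu\varphi^{-1}}{1}$ and on $\clxx{D}{E}$ use $\mattt{1}{-\varphi^{-1}\lambda}{0}{1}$. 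Conjugating the differential $\mattt{\varphi}{\lambda}{\mu}{\eta}$ by these (and checking that the incoming map $\matot{\cdot}{\alpha}$ and outgoing map $\matto{\cdot}{\epsilon}$ are only modified in their already-irrelevant components, since $A$ maps to $B$ with the indicated unlabeled entry ``$\cdot$'' and $\epsilon$ lands away from $D$) replaces the middle map by the block-diagonal-up-to-lower-triangle form $\mattt{\varphi}{0}{0}{\eta - \mu\varphi^{-1}\lambda}$. One must verify that the two chosen matrices are genuine chain maps on the respective subquotients, i.e. that $d^2 = 0$ forces the requisite compatibility among $\alpha$, the ``$\cdot$'' entries, $\epsilon$, and the block entries; this is the only place the hypothesis that the top row is a genuine subcomplex (with those specific zero entries) gets used.

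After this change of basis the complex visibly splits as a direct sum: one summand is the two-term complex $\cdots \to 0 \to B \xrightarrow{\varphi} D \to 0 \to \cdots$ sitting in the two relevant homological degrees, and the complementary summand is exactly the bottom row $A \xrightarrow{\alpha} C \xrightarrow{\eta - \mu\varphi^{-1}\lambda} E \xrightarrow{\epsilon} F$ with everything else of $K_*$ unchanged. Since $\varphi$ is an isomorphism, the first summand is contractible via the homotopy $h = \varphi^{-1}$ in the appropriate degree. Then I would invoke the standard fact that a chain complex isomorphic to $X \oplus Y$ with $Y$ contractible is homotopy equivalent to $X$, to conclude that $K_*$ is homotopy equivalent to the smaller complex. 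The vertical maps in the displayed diagram are simply the composite of the base-change isomorphism with the projection onto the non-contractible summand (and, in the reverse direction, the inclusion), which I would record explicitly so the equivalence is usable in later sections.

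The one genuinely delicate point — the ``main obstacle'' — is bookkeeping rather than conceptual: one must be careful that in passing to the subcomplex containing the displayed top row, the terms $A$, $C$, $E$, $F$ may each be proper summands of the corresponding terms of the ambient $K_*$, and there may be further differential components into and out of the suppressed summands. I would handle this by noting that the change-of-basis automorphisms act as the identity on all suppressed summands, so they extend to automorphisms of all of $K_*$, and the splitting argument then goes through verbatim on the nose; the only interactions that matter are those with $B$ and $D$, and by hypothesis $B$ receives a map only from $A$ (through the component labelled ``$\cdot$'') and $D$ maps forward only to $E$, so the contractible summand $B \xrightarrow{\varphi} D$ really does split off. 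I do not expect to need the monotonicity hypotheses on $\Kom^0(n)$ here, since the lemma is purely about additive categories and finite local manipulations; it is stated in that generality precisely so it can be applied degreewise throughout the paper.
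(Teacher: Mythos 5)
The paper states this lemma without proof, citing \cite{MR2174270, MR2457839}, so there is no in-text argument to compare against; your proposal is the standard proof appearing in those sources. You block-diagonalize the middle differential by a change of basis, split off the acyclic two-term summand $B\xrightarrow{\varphi}D$, use $d^2=0$ to see that the splitting is a genuine direct-sum decomposition of chain complexes, and discard the contractible piece (null-homotopy $\varphi^{-1}$). Your handling of the ambient complex $K_*$ by extending the automorphisms by the identity on suppressed summands, and your observation that no monotonicity hypothesis is needed, are both exactly right.

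One small slip: the two change-of-basis matrices are placed on the wrong objects. Killing the $B\to E$ entry $\mu$ of $\mattt{\varphi}{\lambda}{\mu}{\eta}$ is a row operation, i.e.\ post-composition, so $\mattt{1}{0}{-\mu\varphi^{-1}}{1}$ should act on $D\oplus E$; killing the $C\to D$ entry $\lambda$ is a column operation, i.e.\ pre-composition, so $\mattt{1}{-\varphi^{-1}\lambda}{0}{1}$ should act on $B\oplus C$. You have them reversed. With the correction one has
$$\mattt{1}{0}{-\mu\varphi^{-1}}{1}\mattt{\varphi}{\lambda}{\mu}{\eta}\mattt{1}{-\varphi^{-1}\lambda}{0}{1} = \mattt{\varphi}{0}{0}{\eta-\mu\varphi^{-1}\lambda},$$
and, writing $\beta\colon A\to B$ and $\delta\colon D\to F$ for the unlabeled components, the identities $\varphi\beta + \lambda\alpha = 0$ and $\delta\varphi + \epsilon\mu = 0$ coming from $d^2 = 0$ kill the new $A\to B$ and $D\to F$ entries in the changed basis, so the splitting is direct and the rest of your argument closes as you describe.
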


\smallskip

\begin{remark}\label{deloop}
Often a preliminary step before an application of Gaussian Elimination will
be delooping.  This consists of using the isomorphisms $\varphi$ and $\psi$
defined in section \ref{categorified TL section} to remove a disjoint circle
from a diagram and adjust any maps to and from the diagram accordingly. In
applications this results in an isomorphism which can be removed using lemma
\ref{gaussian elimination}. For example, see the proof of theorem
\ref{secondproj thm}.
\end{remark}

The following result is a direct generalization which will be very useful in our context.

\begin{lemma}{(Simultaneous Gaussian Elimination)} \label{sim gaussian elimination}
Let $K_*$ be a chain complex in an additive category $\mathcal A$ of the form

$$K_* = \begin{diagram}
\clxyAC{0}  & \rTo^{M_0}  &  \clxxABC{1} & \rTo^{M_1} & \clxxABC{2} & \rTo^{M_2} & \clxxABC{3} & \rTo^{M_3} & \cdots
\end{diagram}$$
where

$$M_0 =
\left(\begin{array}{cc}
a_0 & c_0\\
d_0 & f_0\\
g_0 & j_0
\end{array}\right) \quad \textnormal{   and   } \quad M_i =
\left(\begin{array}{ccc}
a_i & b_i & c_i\\
d_i & e_i & f_i\\
g_i & h_i & j_i
\end{array}\right) \textnormal{ for all $i > 0$ }
$$

If $a_{2i} : A_{2i} \to A_{2i+1}$ and $e_{2i+1} : B_{2i+1} \to B_{2i+2}$ are
isomorphisms for $i \geq 0$ then the chain complex $K_*$ is homotopy
equivalent to the smaller chain complex $D_*$ obtained by removing all $A_i$
and $B_i$ terms via the isomorphisms $a_{2i}$ and $e_{2i+1}$:

$$D_* = \begin{diagram}
C_0 & \rTo^{q_0} & C_1 & \rTo^{q_1} & C_2 &\rTo^{q_2} & C_3 & \rTo^{q_3} & \cdots
\end{diagram}
$$

where $q_{2i} = j_{2i} - g_{2i} a_{2i}^{-1} c_{2i}$ and $q_{2i+1} = j_{2i+1} - h_{2i+1} e_{2i+1}^{-1} f_{2i+1}$.

\end{lemma}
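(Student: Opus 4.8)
The plan is to bootstrap the Simultaneous Gaussian Elimination lemma directly from the ordinary Gaussian Elimination lemma (Lemma \ref{gaussian elimination}), applying it infinitely many times in a way that is ``locally finite'' so that the resulting homotopy equivalences compose. First I would observe that the $A_i$ and $B_i$ cancellations do not interfere with one another: the isomorphism $a_{2i}\colon A_{2i}\to A_{2i+1}$ sits in homological degrees $2i, 2i+1$, while $e_{2i+1}\colon B_{2i+1}\to B_{2i+2}$ sits in degrees $2i+1, 2i+2$. So although degree $2i+1$ carries \emph{both} an outgoing $A$-term and an incoming $B$-term, in any single homological degree at most one of each type of cancellation touches a given summand, and a summand $C_k$ is touched only by the two eliminations in degrees $k-1,k$ and $k,k+1$. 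This local finiteness is what makes the infinite iteration legitimate even though $K_*$ is unbounded above (it is bounded below, which is all we need, since each $C_k$ stabilizes after finitely many steps of the process).

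The key steps, in order: (1) Fix $k\geq 0$ and perform the two relevant eliminations that affect homological degree $k$ --- cancelling $a_{2i}$ with $i$ ranging so that $2i$ or $2i+1$ equals $k$, and cancelling $e_{2i+1}$ similarly. Each application is a literal instance of Lemma \ref{gaussian elimination}, where the roles of $(B,C,D,E)$ in that lemma are played by the $A$-summand being cancelled, the complementary $B\oplus C$ summand, the target $A$-summand, and the target $B\oplus C$ summand respectively (and symmetrically for the $B$-eliminations). (2) Track the effect on the differential. After cancelling $a_{2i}$, the new differential out of $C_{2i}$ acquires the correction term $-\,g_{2i}a_{2i}^{-1}c_{2i}$ coming from the off-diagonal entries in $M_{2i}$, exactly as in the statement of Lemma \ref{gaussian elimination}; since the entry of $M_{2i}$ hitting $A_{2i+1}$ from $A_{2i}$ is the isomorphism $a_{2i}$ itself and there are no other $A_{2i+1}$-valued maps (the $B$ and $C$ rows of $M_{2i}$ land in $B_{2i+1}$ and $C_{2i+1}$), no further cross-terms appear. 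Similarly cancelling $e_{2i+1}$ produces the correction $-\,h_{2i+1}e_{2i+1}^{-1}f_{2i+1}$. One checks that these two corrections, applied in either order, combine additively on $C_{2i}\to C_{2i+1}$ so that the resulting map is $q_{2i}=j_{2i}-g_{2i}a_{2i}^{-1}c_{2i}$ with the $B$-correction invisible there (it affects the map \emph{into} $C_{2i}$, namely $q_{2i-1}$, instead), and symmetrically at odd degrees. (3) Assemble: the composite of all these homotopy equivalences, performed degree-by-degree, is a well-defined homotopy equivalence $K_*\to D_*$ because in each fixed homological degree only finitely many eliminations act nontrivially, so the infinite composite is eventually constant on each $C_k$ and the homotopies can be summed. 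Finally verify $d^2=0$ in $D_*$, i.e. $q_{i+1}q_i=0$; this follows formally from $M_{i+1}M_i=0$ in $K_*$ after substituting the block forms, or more cleanly from the fact that homotopy equivalence preserves being a chain complex.

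The main obstacle I expect is the bookkeeping in step (3): justifying that an \emph{infinite} sequence of Gaussian eliminations assembles into a single honest homotopy equivalence rather than just a ``formal'' limit. The clean way to handle this is to note that $\Kom(n)$ is bounded below, fix a cutoff $N$, and observe that the eliminations affecting homological degrees $\leq N$ form a finite set whose composite is a genuine homotopy equivalence by iterating Lemma \ref{gaussian elimination} finitely many times; then let $N\to\infty$ and check that the maps, their inverses, and the homotopies all stabilize on each summand $C_k$ (they do, since $C_k$ is only altered by the $\leq 4$ eliminations in degrees $k-1,k,k+1$). A secondary, purely computational nuisance is confirming that the two types of correction term land on \emph{different} arrows (the $a$-correction on $q_{2i}\colon C_{2i}\to C_{2i+1}$, the $e$-correction on $q_{2i+1}\colon C_{2i+1}\to C_{2i+2}$) so that the final formulas come out as stated without mixed terms; this is immediate once one writes the $3\times 3$ block matrices with the rows and columns labeled by $A,B,C$ and notes which off-diagonal entries feed back through the inverted isomorphism.
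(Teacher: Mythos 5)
Your proof is correct and is essentially the same as the paper's: both reduce the lemma to repeated application of ordinary Gaussian Elimination (Lemma~\ref{gaussian elimination}), exploiting the fact that the $a_{2i}$ and $e_{2i+1}$ eliminations live in disjoint pairs of homological degrees and so do not interfere. The paper simply performs the eliminations in two infinite rounds (all $a_{2i}$, then all $e_{2i+1}$) and records the resulting matrices $Y_i$, while you organize the same iterations degree-by-degree and add an explicit stabilization/cutoff argument to justify the infinite composite; this extra care is sound but not a different route.
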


\begin{proof}

  First apply Gaussian elimination to each isomorphism $a_{2i}$ in order to
  obtain the chain complex

$$\begin{diagram}
C_0 & \rTo^{X} & \clxy{B_1}{C_1} & \rTo^{Y_1} & \clxy{B_2}{C_2} & \rTo^{Y_2} & \clxy{B_3}{C_3} & \rTo^{Y_3} & \clxy{B_4}{C_4} & \rTo & \cdots
\end{diagram}$$

where $X = \matot{f_0 - c_0 a_0^{-1} d_0}{j_0 - c_0 a_0^{-1} g_0}$\quad and

\begin{align*}
Y_{2i} &= \mattt{e_{2i}-d_{2i} a_{2i}^{-1} b_{2i}}{f_{2i}-d_{2i}a_{2i}^{-1} c_{2i}}
{h_{2i}-g_{2i}a_{2i}^{-1} b_{2i}}{j_{2i}-g_{2i}a_{2i}^{-1} c_{2i}} &
Y_{2i+1} &= \mattt{e_{2i+1}}{f_{2i+1}}{h_{2i+1}}{j_{2i+1}}
\end{align*}

Now apply Gaussian elimination to each isomorphism $e_{2i+1}$ in order to
obtain the chain complex $D_*$ above.
\end{proof}

A chain complex $K_*$ is contractible if $K_* \simeq 0$, see
\cite{gelfandmanin}.

\begin{lemma}{(Big Collapse)} \label{big collapse}
  A chain complex $K_*$ of contractible chain complexes $K_i$ is
  contractible.
\end{lemma}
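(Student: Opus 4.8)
The plan is to set up a filtration of the total complex $K_*$ by the "column index" coming from the fact that each $K_i$ is itself a chain complex. Concretely, write $K_i = (K_{i,\bullet}, \partial^v_i)$ for the contractible chain complex sitting in homological degree $i$ of $K_*$, with vertical differential $\partial^v$, and let $\partial^h \colon K_{i,j} \to K_{i+1,j'}$ denote the (collection of) components of the differential of $K_*$; the total differential is $\partial = \partial^v \pm \partial^h$ after suitable sign conventions (this is just the statement that a chain complex of chain complexes is a double complex, possibly with higher differentials — I will address that subtlety below). Since each $K_i$ is contractible, choose a contracting homotopy $h_i \colon K_{i,\bullet} \to K_{i,\bullet+1}$ with $\partial^v h_i + h_i \partial^v = \mathrm{id}_{K_i}$.

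First I would handle the clean case where $\partial = \partial^v + \partial^h$ is an honest double complex differential (so $\partial^h$ raises the column degree by exactly one and $(\partial^h)^2 = 0$, $\partial^v\partial^h + \partial^h\partial^v = 0$). Here I would build a contracting homotopy $H$ for $\partial$ by the standard "zig-zag" formula $H = \sum_{k \ge 0} (-1)^{?}\, h\,(\partial^h h)^k$, i.e. $H = h - h\partial^h h + h\partial^h h \partial^h h - \cdots$. Because the paper's chain complexes are bounded below in homological degree (Definition \ref{kom def}: there is $N$ with $K_n = 0$ for $n < N$) and $\partial^h$ strictly increases the homological degree, for any fixed element of $K_{i,j}$ only finitely many terms of this sum are nonzero, so $H$ is well-defined as a morphism in the (finitely supported in each degree) ambient additive category. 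A direct computation then gives $\partial H + H \partial = \mathrm{id}$: the $\partial^v$-part of the telescoping sum collapses using $\partial^v h + h\partial^v = \mathrm{id}$ in each column, and the cross terms cancel in pairs. This shows $K_* \simeq 0$.

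The main obstacle — and the step I would spend the most care on — is justifying that "a chain complex of chain complexes" really is a double complex with no higher differentials, or else adapting the argument if it is not. In the categories $\Kom(n)$ at hand, an object of $\Kom(\Kom(n))$ has a differential whose components $K_{i,j} \to K_{i+1,j+1-k}$ a priori only need to satisfy the total relation $\partial^2 = 0$; there is no reason the component dropping vertical degree by more than one should vanish. So I would instead run the homotopy-via-filtration argument: filter $K_*$ by $F^p = \bigoplus_{i \le p} K_{i,\bullet}$, note each associated graded piece $\mathrm{gr}^p = K_p$ is contractible, and argue inductively that the truncations $K_*/F^{p-1}$ are contractible for all $p$, assembling the contractions compatibly. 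Alternatively, and perhaps most cleanly, I would invoke the mapping-cone description: $K_*$ is an iterated (infinite, but bounded-below) cone of the shifted contractible complexes $K_i$, and the cone of a map between contractible complexes is contractible (cone of $0 \to 0$, up to homotopy); one then needs that an infinite bounded-below iterated cone of contractibles is contractible, which again reduces to the convergence of the zig-zag homotopy and is exactly where the bounded-below hypothesis of Definition \ref{kom def} is used. Either route, the only real content is (i) constructing $H$ by the alternating sum and (ii) checking the finiteness/convergence, so that the formula makes sense termwise in the additive category $\Mat(\Cob^3_{\cdot/l}(n))$.
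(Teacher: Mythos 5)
The paper states this lemma bare, with no proof supplied, so there is no ``paper proof'' to compare against --- the authors treat it as standard. Your proposed argument is correct. The zig-zag/basic-perturbation-lemma computation does go through: with $D=d^v+d^h$, $(d^h)^2=0$, $d^vd^h+d^hd^v=0$, and a vertical contracting homotopy $h$ with $d^vh+hd^v=1$, one checks directly that $H=h(1+d^hh)^{-1}=h-hd^hh+hd^hhd^hh-\cdots$ satisfies $DH+HD=1$ (the key cancellation uses $d^h(1+hd^h)^{-1}d^h=0$ since $(d^h)^2=0$), and the geometric series converges term-by-term because $d^hh$ strictly raises the column index $i$ while each total-degree component $\bigoplus_{i+j=n}K_{i,j}$ has only finitely many nonzero summands, thanks to the bounded-below convention of definition \ref{kom def}. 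This is exactly the right mechanism.

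The one place where you worked harder than necessary is the concern about higher differentials. ``A chain complex $K_*$ of chain complexes $K_i$'' means an object of $\Kom(\Kom(n))$: the $K_i$ are genuine objects of $\Kom(n)$ and the differentials $K_i\to K_{i+1}$ are chain maps in $\Kom(n)$. By definition that is an honest bicomplex, so the total differential is $d^v\pm d^h$ with no components $K_{i,j}\to K_{i+k,j-k+1}$ for $k\ge 2$. Such higher components would appear only for a filtered complex that is not assumed to come from a bicomplex, which is not what the statement says. Consistent with this, the lemma is invoked (in proposition \ref{composition of projectors} and lemma \ref{cfk structure}) for totalizations of tensor-product bicomplexes and for column-filtered $\CFK$ pieces, both honest bicomplexes. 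Your fallback routes (filtration by columns, or presenting $\Tot(K)$ as an iterated bounded-below cone of contractibles) are also fine and reduce to the same convergence point; they are not needed here, but noticing the potential pitfall is good practice. One could also prove the lemma inside the paper's own toolkit by an unbounded application of Gaussian elimination (lemma \ref{gaussian elimination} / \ref{sim gaussian elimination}) across the columns, which is morally the same computation packaged differently.
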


\section{Universal Projectors and Statement of the Main Theorem} \label{main theorem section}
The projectors defined in this paper satisfy a universal property making
them unique up to homotopy.

\begin{definition} \label{universal projector def}
  A chain complex $(P_*,d_*) \in \Kom(n)$ is a \emph{universal projector} if
\begin{enumerate}

\item It is positively graded with degree zero differential.
\begin{enumerate}
\item $P_k = 0$ for all $k < 0$ and $\deg_q(P_k) \geq 0$ for all $k > 0$.
\item $d_k$ is a matrix of degree zero maps for all $k \in \mathbb{Z}$.
\end{enumerate}

\item The identity diagram appears only in homological degree zero and only
  once.
\begin{enumerate}
\item $P_0 \cong 1$
\item $P_k \not\cong 1 \oplus D$ for any $D \in \Mat(\Cob(n))$ for all $k
  > 0$.
\end{enumerate}

\item The chain complex $P_*$ is contractible ``under turnbacks,'' that is for any generator $e_i \in \TL_n$, $0<i<n$,
\begin{enumerate}
\item $P_* \otimes e_i \simeq 0$
\item $e_i \otimes P_* \simeq 0$
\end{enumerate}
\end{enumerate}
\end{definition}

See section \ref{categorified TL section} for a discussion of degrees.

Compare these axioms to the axioms in section \ref{JW projectors}
characterizing the Jones-Wenzl projectors $p_n\in\TL_n$. The first two
axioms are non-triviality conditions. The first excludes uninteresting
variants of the definition obtained by degree shifting and symmetry. For
instance, we could change (1) to require a negative $q$-grading and reverse
of all the arrows. The second excludes contractible complexes from
consideration. The third axiom implies that composing the projector with any
Temperley-Lieb diagram which is not identity yields an object in $\Kom(n)$
which is homotopic to the zero complex.

If $P_n \in \Kom(n)$ is a universal projector then $[P_n] \in
K_0(\Kom(n)) \cong \TL_n$ satisfies the axioms in section \ref{JW
  projectors}, implying that $[P_n] = p_n \in \TL_n$. See section
\ref{catdiscuss} for a detailed discussion of what categorification means in
this context.

In \cite{MR2232858} it was shown that those categorifications of $\TL_n$
giving rise to functorial invariants of tangles must be categories
containing Frobenius algebra objects which are quotients of the one
described by section \ref{categorified TL section} (see example
\ref{thecircle}). Since the construction of the universal projector only
uses relations which follow from the existence of such an algebra it is
possible that the construction of the universal projector in $\Kom(n)$ could
be carried out within other categorifications.

It is important to note that our definition \emph{disagrees} with some
previous categorifications based on different axiomizations of the
Jones-Wenzl projectors such as the dimension axiom \cite{MR2124557} (for
related work see \cite{MR2462446, Grigsby}):
$$\tr(p_n) = [n+1]$$

This is implied by the homotopy uniqueness corollary below and the
computation of $\Homology_*(\tr(P_2))$ contained in the next section.

We can now state the main theorem of the paper.

\begin{theorem} \label{main theorem} For each $n > 0$, there exists a chain
  complex $C \in \Kom(n)$ that is a universal projector.
\end{theorem}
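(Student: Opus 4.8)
The plan is to construct $P_n$ inductively in $n$, following the Frenkel--Khovanov recursion for the classical projectors $p_n = p_{n-1} - \tfrac{[n-1]}{[n]}\, p_{n-1} e_{n-1} p_{n-1}$. The base case $n=1$ is trivial: take $P_1 = 1$ concentrated in homological degree zero, which vacuously satisfies all three axioms. For the inductive step, suppose $P_{n-1}$ has been built. The first move is to form $P_{n-1} \sqcup 1 \in \Kom(n)$ using the inclusion functor; this complex already has the identity diagram appearing once in degree zero and is positively graded with degree-zero differentials, so axioms (1) and (2) will be inherited by anything homotopy equivalent to it that only adds material in positive homological degree. However $P_{n-1}\sqcup 1$ is \emph{not} killed by the new turnback $e_{n-1}$: the complex $(P_{n-1}\sqcup 1)\otimes e_{n-1}$ is, up to the known contractibility $P_{n-1}\otimes e_i \simeq 0$ for $i<n-1$, homotopy equivalent to a shifted copy of $P_{n-1}$ (with a circle delooped and eliminated via Remark \ref{deloop} and Lemma \ref{gaussian elimination}). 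The classical analogue of this fact is exactly why the correction term $-\tfrac{[n-1]}{[n]}p_{n-1}e_{n-1}p_{n-1}$ is needed.

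The heart of the construction is to build a chain map (or a zig-zag / cone) that cancels this leftover copy of $P_{n-1}$. Concretely I would set up the Frenkel--Khovanov sequence alluded to in the ``fk sequence section'': an explicit infinite complex, with monotonically increasing $q$-degree, whose even and odd pieces alternate between $P_{n-1}\sqcup 1$-type terms and turnback-decorated $P_{n-1}$-type terms, the connecting maps being saddles (the categorified version of multiplication by $e_{n-1}$). One then defines $P_n$ to be the total complex of this sequence, or equivalently an iterated mapping cone. The classical recursion, read mod the Euler characteristic, expands the rational coefficient $\tfrac{[n-1]}{[n]}$ as a power series in $q$; categorically this is precisely why $P_n$ must be an unbounded complex, and the terms of the expansion dictate the $q$-shifts in the sequence. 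Using Lemma \ref{sim gaussian elimination} (simultaneous Gaussian elimination) on this total complex one should be able to simplify it and verify directly that the only appearance of the identity is in degree zero (axiom 2), that the $q$-gradings are nonnegative and increasing (axiom 1), and ultimately that $[P_n]=p_n$ in $K_0$.

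The main obstacle — and where the real work lies — is verifying axiom (3), contractibility under each turnback $e_i$, $0<i<n$. For $i<n-1$ this should reduce, via the monoidal structure and the inductive hypothesis $P_{n-1}\otimes e_i \simeq 0$, to showing that tensoring the whole Frenkel--Khovanov sequence with $e_i$ produces a complex of contractible complexes, so that Lemma \ref{big collapse} applies. The genuinely delicate case is $i = n-1$: here one must show that tensoring $P_n$ with $e_{n-1}$ kills it, which amounts to checking that in $P_n\otimes e_{n-1}$ every term can be cancelled in pairs by Gaussian elimination — i.e. that the saddle maps in the sequence become, after delooping, a sequence of isomorphisms that telescope the entire complex to zero. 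This requires a careful bookkeeping argument showing the relevant composites are $\pm$ identities (up to the neck-cutting and sphere relations of $\Cob(n)$), and controlling signs and the dot/handle relations; it is the step most likely to need a separate technical lemma. A secondary subtlety is confirming that the inductive construction stays inside $\Kom^0(n)$ (monotone $q$-degree), which is what makes the Euler characteristic well-defined and the identification with $p_n$ meaningful; this follows from the Remark after the second Grothendieck-group lemma together with a check that the chosen $q$-shifts in the sequence are monotone, but it must be tracked throughout.
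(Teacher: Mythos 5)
Your high-level strategy matches the paper's: base the construction on the Frenkel--Khovanov linear recurrence, build the complex as a periodic infinite sequence tensored with $P_{n-1}\sqcup 1$, and verify contractibility under $e_i$ by combining the inductive hypothesis for $i<n-1$ with explicit local cancellations (which the paper formalizes as ``triples'' and ``quadruples'' in sections \ref{triple}--\ref{quadruple}) and then applying Lemma \ref{sim gaussian elimination} and Lemma \ref{big collapse}. That part is on target.

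However there is a genuine gap: you write that one ``defines $P_n$ to be the total complex of this sequence, or equivalently an iterated mapping cone,'' treating the Frenkel--Khovanov sequence of saddle maps as if it were already a chain complex. It is not. The composition of two consecutive saddles in $\KF_n$ is \emph{not} zero in $\Cob(n)$ --- it is a cylinder, which the neck-cutting relation rewrites in terms of dotted identities, and these are nonzero. Consequently $(P_{n-1}\sqcup 1)\otimes\KF_n$ has $d^2\neq 0$, there is no total complex to take, and the naive mapping-cone description does not produce an object of $\Kom(n)$. The paper flags this explicitly: ``the composition of two saddles is not equal to zero (although it is homotopic to zero). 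The technical heart of this paper consists of a detour taken purely for the purpose of arriving at an actual chain complex.'' Your proposal skips exactly this detour.

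What the paper actually does (section \ref{fattening fk section}) is ``fatten'' the FK sequence into an honest chain complex $\CFK_{n,l}$, built recursively in the length $l$ by repeatedly tensoring on the next saddle (or pair of maps when a degree-$2$ differential appears) so that at each stage $d^2=0$ holds \emph{by construction}; the price is a large contractible summand $K_l$. Lemma \ref{cfk structure} establishes the decomposition $\CFK_{n,l}\cong (P_{n-1}\sqcup 1)\otimes(\FK_{n,l}\oplus K_l)$ with $K_l$ contractible, and only \emph{after} contracting $K_l$ does one get a genuine chain complex $P_{n,l}$ realizing the truncated FK picture, with stability (Proposition \ref{stability}) allowing the limit $P_n=P_{n,\infty}$. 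Equivalently, the extra differentials in $P_{n,l}$ are the nullhomotopies and Massey products needed to rectify the homotopy chain complex $HP_n$, and your axioms must guarantee uniqueness of the resulting rectification --- but you would still need to produce it. So while your plan captures the right recursion and the right cancellation mechanism under turnbacks, it is missing the central construction that turns the heuristic sequence into an actual object of $\Kom(n)$; without it the proof does not get off the ground.
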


We summarize some immediate consequences of the axioms in definition
\ref{universal projector def}. See also sections \ref{Reidemeister section}
and \ref{spin networks section}.

\begin{proposition} \label{composition of projectors}
  If $C \in \Kom(n)$ is a universal projector and $D \in \Kom(m)$ is a
  universal projector such that $0 \leq m \leq n$ then
$$C \otimes (D \sqcup 1^{n-m}) \simeq C \simeq (D \sqcup 1^{n-m} ) \otimes C$$

  Pictorially,
$$\BPic{compositeprop} \simeq\!\! \BPic{compsquare2} \simeq\!\! \BPic{compositeprop2}$$

\end{proposition}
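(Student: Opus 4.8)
The plan is to leverage the characterization of universal projectors together with Gaussian elimination, rather than any explicit formula for $C$ or $D$. First I would observe that $D \sqcup 1^{n-m}$ is, by definition of the inclusion functor, a chain complex in $\Kom(n)$ whose degree-zero term is $1^{n-m}$ unioned with the identity diagram of $\Kom(m)$, i.e. the identity diagram $1 \in \Cob(n)$, and all of whose higher terms are of the form $(\text{turnback diagrams of } \Kom(m)) \sqcup 1^{n-m}$. Each such higher-degree summand factors through some $e_i$ with $0 < i < m < n$, so when we tensor on the left with $C$ we may group the terms of $C \otimes (D \sqcup 1^{n-m})$ by the homological degree of the $D$-factor: the degree-zero piece is exactly $C \otimes 1 = C$, and each higher piece is a bounded-below chain complex built out of objects of the form $C \otimes e_i \otimes (\text{stuff})$.

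The key step is then to use axiom (3a), $C \otimes e_i \simeq 0$ for every $0 < i < n$, to show that everything except the degree-zero $D$-piece is contractible. Concretely, I would filter $C \otimes (D \sqcup 1^{n-m})$ by the homological degree in the $D$-direction; the associated graded consists of $C$ in filtration $0$ and, in each positive filtration $k$, a finite direct sum of complexes of the form $C \otimes \big( (D_k\text{'s summands})\sqcup 1^{n-m}\big)$, each of which is a shift of $C \otimes e_i \otimes(\cdots) \simeq 0$ — here one uses that $\otimes$ preserves homotopy equivalences, so tensoring a contractible complex with anything stays contractible, and the Big Collapse lemma (Lemma~\ref{big collapse}) to assemble a chain complex of contractibles into a contractible complex. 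Running this filtration/collapse argument, or equivalently performing Simultaneous Gaussian Elimination (Lemma~\ref{sim gaussian elimination}) to cancel all the positively-$D$-graded material against itself inside $C\otimes(D\sqcup 1^{n-m})$, leaves a complex homotopy equivalent to $C$. The right-tensor statement $ (D\sqcup 1^{n-m})\otimes C\simeq C$ is entirely symmetric, using axiom (3b) instead.

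The main obstacle is making the "group the terms and collapse" step rigorous: the complex $C\otimes(D\sqcup 1^{n-m})$ is only bounded below, not bounded, so one cannot naively induct degree by degree, and one must check that the filtration by $D$-degree is exhaustive and that the homotopies produced on each graded piece can be assembled without convergence issues. This is exactly the situation the (Big Collapse) and (Simultaneous Gaussian Elimination) lemmas were stated to handle — both are designed for unbounded-above complexes living in $\Kom^0(n)$ with monotone $q$-degree — so the real content is verifying their hypotheses: that the relevant sub/quotient complexes are genuinely of the shape required (a single isomorphism $a_{2i}$ or $e_{2i+1}$ per step, or a column of contractible complexes), and that the resulting equivalence lands back in $\Kom(n)$ rather than escaping the monotonicity condition. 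Once those bookkeeping checks are in place, the three displayed pictures are just the graphical restatement of $C\otimes 1 = C$ after the cancellation. I would also note the special cases $m=0$ (where $D\simeq 1\in\Kom(0)$, $D\sqcup 1^n = 1^n$, and the statement is trivial) and $m=n$ (where there are no extra strands and the claim is the homotopy-idempotence $C\otimes D\simeq C$ for two universal projectors on the same number of strands), to confirm the argument degenerates correctly.
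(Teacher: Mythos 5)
Your approach is essentially identical to the paper's: both view $C\otimes(D\sqcup 1^{n-m})$ as a chain complex (in the $D$-homological direction) of chain complexes, observe via axiom (2) that the degree-zero piece is $C\otimes 1\cong C$ and via axiom (3) that every positive-degree piece is contractible because $D_k\sqcup 1^{n-m}$ is a sum of non-identity diagrams each factoring through some $e_i$, and then apply the Big Collapse lemma to discard the positive-degree material. The only difference is cosmetic: you offer Simultaneous Gaussian Elimination as an alternative tool and spell out the unboundedness concern, whereas the paper simply cites Big Collapse, which is exactly the lemma designed to handle the unbounded column of contractibles; your factoring-through-turnbacks remark is implicit but unstated in the paper.
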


\begin{proof}
  The tensor product of chain complexes $C_* \otimes (D_* \sqcup 1^{n-m})$
  is the total complex of a bicomplex which can be written as a chain
  complex of chain complexes:

$$C_* \otimes (D_0 \sqcup 1^{n-m}) \to C_* \otimes (D_1 \sqcup 1^{n-m}) \to C_* \otimes (D_2 \sqcup 1^{n-m}) \to \cdots $$

Or graphically,
$$\begin{diagram}
\BPic{compositepropexp1} & \rTo & \!\!\BPic{compositepropexp2} & \rTo & \!\!\BPic{compositepropexp3} & \rTo & \cdots
\end{diagram}$$

By the second axiom $D_0 = 1$ in homological degree $0$ and so the identity
diagram cannot be found as a summand of $D_k \sqcup 1^{n-m}$ for any $k >
1$. In addition, $C$ satisfies axiom 3 so it follows that:

\begin{enumerate}
\item The homological degree $0$ portion of this complex is isomorphic to
$C_*$
\item All chain complexes in degree above zero are contractible.
\end{enumerate}

Lemma \ref{big collapse} (big collapse) implies that there is a homotopy
equivalence $C_* \simeq C_* \otimes (D_* \sqcup 1^{n-m})$. The other
equivalence $(D_* \sqcup 1^{n-m} ) \otimes C_* \simeq C_*$ is proven in the
same manner.
\end{proof}

A special case of the above proposition with $m=n$ implies that universal
projectors we have defined behave like idempotent elements. In other words,
if $C$ is a universal projector then the functors $C\otimes -$ and $-\otimes
C$ are idempotent on the homotopy category of $\Kom(n)$.

\begin{corollary}{(Idempotence)} \label{idempotency cor}
If $C \in \Kom(n)$ is a universal projector then

$$C \otimes C \simeq C$$

This is represented diagrammatically as
$$\BPic{compsquare} \simeq \!\!\BPic{compsquare2}$$

\end{corollary}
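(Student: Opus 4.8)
The plan is to derive the Idempotence Corollary directly from Proposition \ref{composition of projectors} by taking the special case $m = n$. In that case there are no extra vertical strands to adjoin, so $D \sqcup 1^{n-m} = D \sqcup 1^0 = D$, where $D \in \Kom(n)$ is itself a universal projector. Thus the proposition reads $C \otimes D \simeq C \simeq D \otimes C$ for any two universal projectors $C, D \in \Kom(n)$. The only remaining step is to specialize $D = C$, which immediately yields $C \otimes C \simeq C$.

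One subtlety worth addressing is that the proposition is stated for an arbitrary pair of universal projectors rather than for a single fixed one, so applying it with $D = C$ requires knowing that a universal projector may legitimately be tensored with itself — but this is automatic since $C \in \Kom(n)$ is a single object and $\otimes$ is defined on $\Kom(n) \otimes \Kom(n) \to \Kom(n)$ as in section \ref{categorified TL section}. There is no circularity: Proposition \ref{composition of projectors} is proved using only axioms (2) and (3) of Definition \ref{universal projector def} together with the Big Collapse lemma (Lemma \ref{big collapse}), none of which presuppose idempotence. So the inference $C \otimes C \simeq C$ is valid with no further hypotheses.

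The diagrammatic reformulation $\BPic{compsquare} \simeq \BPic{compsquare2}$ is then just the pictorial translation of $C \otimes C \simeq C$: the left picture depicts two stacked copies of the projector box, the right depicts one, and the homotopy equivalence asserted is precisely the $m = n$ instance of the pictures appearing in Proposition \ref{composition of projectors}. Consequently, since $[C] = p_n$ in $K_0(\Kom(n)) \cong \TL_n$ (as remarked after the statement of the main theorem), the equivalence $C \otimes C \simeq C$ categorifies the classical idempotence relation $p_n \cdot p_n = p_n$, fitting the entry in the summary table of the introduction.

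There is essentially no main obstacle here: the corollary is a one-line specialization of the preceding proposition, and all the real work — the bicomplex/total-complex analysis, the identification of the homological-degree-zero part with $C_*$, the contractibility of the higher rows via axiom (3), and the appeal to Big Collapse — has already been carried out in the proof of Proposition \ref{composition of projectors}. If anything, the only point requiring a moment's care is confirming that the $n - m = 0$ case of that proof goes through verbatim (it does, since adjoining zero strands is the identity operation), so that no separate argument is needed.

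\begin{proof}
This is the special case $m = n$ of Proposition \ref{composition of projectors}. When $m = n$ we have $1^{n-m} = 1^0 = \emptyset$, so $D \sqcup 1^{n-m} = D$, and the proposition gives $C \otimes D \simeq C \simeq D \otimes C$ for any universal projectors $C, D \in \Kom(n)$. Taking $D = C$ yields $C \otimes C \simeq C$. The diagrammatic statement is the corresponding instance of the pictures in Proposition \ref{composition of projectors}.
\end{proof}
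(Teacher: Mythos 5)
Your proof is correct and matches the paper's approach exactly: the paper also treats the idempotence corollary as the immediate $m=n$ specialization of Proposition \ref{composition of projectors} (with $D=C$), giving no separate argument beyond the observation that a special case applies.
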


Proposition \ref{composition of projectors} also implies that the universal projectors are unique
up to homotopy.

\begin{corollary}{(Homotopy Uniqueness)} \label{homotopy uniqueness corollary}
  If $C, D \in \Kom(n)$ are universal projectors then $C \simeq D$.

\end{corollary}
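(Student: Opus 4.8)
The plan is to deduce homotopy uniqueness directly from Proposition~\ref{composition of projectors} applied twice, using the special case $m=n$ so that the $\sqcup 1^{n-m}$ factors disappear. Concretely, let $C, D \in \Kom(n)$ both be universal projectors. First I would apply Proposition~\ref{composition of projectors} with the roles ``$C$'' $=C$ and ``$D$'' $=D$ (and $m=n$), which gives a homotopy equivalence $C \otimes D \simeq C$. Then I would apply the same proposition with the roles reversed, ``$C$'' $=D$ and ``$D$'' $=C$, which gives $D \otimes C \simeq D$.

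The remaining point is that the two tensor products $C\otimes D$ and $D\otimes C$ are homotopy equivalent. For universal projectors in $\Kom(n)$ the tensor product is taken along all $n$ boundary points and intervals, and one should check that this monoidal structure is braided (or at least that the swap cobordism induces a homotopy equivalence $C\otimes D \simeq D\otimes C$); alternatively, and more in the spirit of the paper, one can avoid even this by running the big-collapse argument of Proposition~\ref{composition of projectors} symmetrically: expanding $C\otimes D$ as a complex of complexes in the $D$-direction collapses it to $C$, while expanding the \emph{same} complex in the $C$-direction collapses it to $D$, so $C\simeq C\otimes D\simeq D$. I would present the short version: chain the equivalences
\[
C \;\simeq\; C\otimes D \;\simeq\; D\otimes C \;\simeq\; D,
\]
where the middle equivalence is the symmetry of $\otimes$ up to homotopy, and the outer two are Proposition~\ref{composition of projectors}.

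I expect the only genuine obstacle to be justifying the middle step $C\otimes D\simeq D\otimes C$: one must be slightly careful because $\otimes$ on $\Cob(n)$ is defined by gluing along boundary points and is not literally symmetric on the nose, and because the complexes here are semi-infinite, so one wants to know the relevant swap map is an honest homotopy equivalence rather than merely a quasi-isomorphism. This is handled by noting that the interchange cobordism between $C\otimes D$ and $D\otimes C$ is invertible up to the relations in $\Cob(n)$ and is degree-preserving, hence gives a chain homotopy equivalence in $\Kom^0(n)$; everything else is a formal concatenation of the equivalences already supplied by Proposition~\ref{composition of projectors}. Hence the corollary follows immediately.
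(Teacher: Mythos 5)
Your primary route has a genuine gap at the middle step. The product $\otimes$ on $\Cob(n)$ is vertical stacking of Temperley--Lieb diagrams --- it is the algebra multiplication --- and it is \emph{not} symmetric or braided: already in $\TL_3$ one has $e_1\otimes e_2 \ne e_2\otimes e_1$, and there is no ``interchange cobordism'' relating $A\otimes B$ to $B\otimes A$ (these are morphisms between different objects, and nothing forces them to be invertible). So the equivalence $C\otimes D \simeq D\otimes C$ cannot be invoked as a general structural fact; a priori it is of the same difficulty as the corollary itself. You correctly sense that this step is the obstacle, but the proposed fix (an invertible swap cobordism) does not exist in this category.

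The alternative you mention in passing is in fact the paper's argument, and you should promote it to the main line. Proposition~\ref{composition of projectors} is a \emph{two-sided} statement: for universal projectors it gives both $C\otimes(D\sqcup 1^{n-m})\simeq C$ and $(D\sqcup 1^{n-m})\otimes C\simeq C$. Setting $m=n$ and applying the left formula with the given $C,D$ yields $C\otimes D\simeq C$; applying the right formula with the roles of the two projectors exchanged yields $(C\sqcup 1^0)\otimes D\simeq D$, i.e.\ $C\otimes D\simeq D$. One works with the single complex $C\otimes D$ throughout and collapses it either in the $D$-direction (using that $C$ kills turnbacks on the right) or in the $C$-direction (using that $D$ kills turnbacks on the left). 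This gives $C\simeq C\otimes D\simeq D$ with no commutativity of $\otimes$ anywhere. You had this idea; commit to it and drop the swap-cobordism argument.
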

\begin{proof}
  Let $C,D \in \Kom(n)$ be universal projectors. Proposition \ref{composition of projectors}  holds when $n=m$ so that $1^0 = \emptyset$,

$$ C \simeq C \otimes (D \sqcup \emptyset) \cong (C \sqcup \emptyset) \otimes D \simeq D$$

In pictures,
$$\BPic{compsquare2} \simeq \!\!\BPic{compsquareCD} \simeq \!\!\BPic{compsquareD}$$
\end{proof}

\section{Explicit Formulae and Computations} \label{formulasec}

We now give some explicit examples of lower order projectors. The second
projector below will play a role in the proof of the main theorem. Higher
projectors are much more complicated.

\subsection{The Second Projector} \label{second projector section}

The second projector is defined to be the chain complex

$$\begin{diagram}
\CPic{p2box}\! & = & \!\CPic{n2-1} & \rTo^{\MPic{n2-1-sad}} &q \CPic{n2-s} &\rTo^{\MPic{n2-tops} \!\!\!- \MPic{n2-bots}} &q^{3} \CPic{n2-s} &
\rTo^{\MPic{n2-tops}\!\!\! + \MPic{n2-bots}} & q^{5} \CPic{n2-s} &  \cdots
\end{diagram}$$

in which the last two maps alternate ad infinitum. More explicitly,

$$P_2 = (C_*, d_*)$$

The chain groups are given by
$$
C_n =
\left\{
\begin{array}{lr}
q^0 \MPic{n2-1}&n = 0\\
q^{2n-1} \MPic{n2-s}&n > 0
\end{array}
\right.
$$

The differential is given by

$$
d_n =
\left\{
\begin{array}{llr}
\MPic{n2-1-sad}                 & : \MPic{n2-1}\!\! \to q \MPic{n2-s}   & n = 0\\
\MPic{n2-tops} \!\!\! + \MPic{n2-bots} & :   q^{4k-1} \MPic{n2-s}\!\!  \to q^{4k +1} \MPic{n2-s}                    &n \ne 0, n = 2k\\
\MPic{n2-tops} \!\!\! - \MPic{n2-bots} & : q^{4k +1} \MPic{n2-s}\!\!  \to q^{4k+3} \MPic{n2-s}                    & n = 2k+1
\end{array}
\right.
$$

\begin{proposition}
$P_2$ defined above is a chain complex, that is successive compositions of the differential are equal to zero.
\end{proposition}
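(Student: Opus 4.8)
The plan is to verify the chain-complex condition $d_{k+1}\circ d_k=0$ directly in the dotted cobordism category $\Cob(2)$, separating the single composition $d_1\circ d_0$ from the infinite family $d_{k+1}\circ d_k$ with $k\ge 1$.

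First I would record two elementary facts about the endomorphisms of the turnback object in $\Cob(2)$. Write $t=\MPic{n2-tops}$ and $b=\MPic{n2-bots}$ for the identity cobordism of the turnback carrying one dot on its upper band, resp.\ its lower band. Since these two bands are disjoint, the dots are placed independently and hence $t\circ b=b\circ t$. Secondly, placing two dots on a single band and applying the neck-cutting relation $\CPic{cyl}=\CPic{cut1}+\CPic{cut2}$ along a small separating circle on that band bounding a disk that contains both dots, one resulting term is a sphere with two dots, which is $0$ by $\CPic{spheredotdot}=0$, while the other is the undecorated band tensored with a sphere carrying three dots, which contributes the scalar $\alpha$ by $\CPic{spheredotdotdot}=\alpha$; thus $t\circ t=\alpha\cdot\mathrm{id}$ and likewise $b\circ b=\alpha\cdot\mathrm{id}$.

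For $k\ge 1$ the differential $d_k$ is $t-b$ or $t+b$, and these alternate, so $d_{k+1}\circ d_k$ is either $(t+b)\circ(t-b)$ or $(t-b)\circ(t+b)$. Expanding bilinearly over $\mathbb{Z}$ and using $t\circ b=b\circ t$, the cross terms cancel and what remains is $t\circ t-b\circ b=\alpha\cdot\mathrm{id}-\alpha\cdot\mathrm{id}=0$ in both cases. For the remaining composition I must show $(t-b)\circ s=0$, where $s=\MPic{n2-1-sad}$ is the saddle from the identity diagram to the turnback. Here I would use that $s$ is a single saddle joining the two bands of the identity diagram, hence a \emph{connected} cobordism; therefore $t\circ s$ and $b\circ s$ coincide, as each is the surface $s$ decorated with one interior dot, and a dot can be isotoped rel boundary between any two interior points of a connected cobordism. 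Thus $d_1\circ d_0=(t-b)\circ s=t\circ s-b\circ s=0$.

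Everything here is routine once the two facts in the second paragraph are in hand; the only point demanding care — the minor obstacle — is stating the neck-cutting evaluation and the dot-isotopy argument precisely within the relations imposed on $\Cob(2)$.
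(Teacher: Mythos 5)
Your proof is correct and is essentially the paper's own argument: the same reduction to the two cases $d_1\circ d_0$ and the alternating composite $(t\pm b)\circ(t\mp b)$, the same dot-isotopy over the connected saddle for the first case, and the same expansion using commutativity of $t,b$ together with the two-dot evaluation $\alpha$ (via neck-cutting and $\CPic{spheredotdot}=0$, $\CPic{spheredotdotdot}=\alpha$) for the second. The paper states the two facts about $t$ and $b$ implicitly where you derive them explicitly, but the structure and content of the argument are identical.
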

\begin{proof}
Since $d_{2n+1} \circ d_{2n} = d_{2n} \circ d_{2n-1}$ there are only two cases,
\begin{eqnarray*}
d_1\circ d_0 &= & \MPic{n2-s-tops}\!\!\! - \MPic{n2-s-bots} \\
&= & \MPic{n2-s-bots} \!\!\!- \MPic{n2-s-bots} =  0\\
\end{eqnarray*}
The second equality follows from moving the dot from one side of the saddle
to the other, and isotopic cobordisms are considered equal by construction, see section \ref{categorified TL section}.

\begin{eqnarray*}
 d_{2n+1} \circ d_{2n} &= & (\MPic{n2-tops} \!\!\! + \MPic{n2-bots})\circ (\MPic{n2-tops}\!\!\! - \MPic{n2-bots}) \\
&=& \MPic{n2-tops-2} \!\!\! + \MPic{n2-mid-2}\!\!\! - \MPic{n2-mid-2} \!\!\! - \MPic{n2-bots-2} \\
&=& \alpha \MPic{n2-s} \!\!\! + 0 - \alpha \MPic{n2-s} = 0\\
\end{eqnarray*}

The last equality follows because the relations of section \ref{categorified TL section} allow us to replace two dots by multiplication with $\alpha$.

\end{proof}

\begin{theorem}\label{secondproj thm}
  The chain complex $P_2 \in \Kom(2)$ defined above is a universal projector.
\end{theorem}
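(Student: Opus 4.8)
The plan is to verify the three axioms of Definition \ref{universal projector def} for the explicit chain complex $P_2$.

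\textbf{Axioms 1 and 2.} These are immediate from the explicit description. The chain complex is supported in non-negative homological degrees, the differentials are all degree-zero cobordisms (a saddle from $q^0$ to $q^1$, and dotted cylinders between consecutive $q$-shifted copies of the cup-cap diagram), and the $q$-gradings $q^{2n-1}$ are non-negative for $n > 0$. For axiom 2, the identity diagram $\MPic{n2-1}$ occurs only at $C_0$, and for $n > 0$ every $C_n$ is a single copy of the turnback $\MPic{n2-s}$ (up to $q$-shift), which is not isomorphic to $1 \oplus D$ in $\Mat(\Cob(2))$.

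\textbf{Axiom 3.} Here $n=2$, so there is only one turnback generator $e_1 = \MPic{n2-s}$, and by the symmetry of $P_2$ under flipping top and bottom it suffices to show $P_2 \otimes e_1 \simeq 0$. First I would compute the chain complex $P_2 \otimes e_1$ term by term: tensoring $\MPic{n2-1}$ with $e_1$ gives $e_1$, and tensoring $\MPic{n2-s}$ with $e_1$ creates a closed circle, so $\MPic{n2-s} \otimes e_1 \cong e_1 \sqcup (\text{circle})$. Using the delooping isomorphism $\varphi,\psi$ from section \ref{categorified TL section} (see Remark \ref{deloop}), each such term splits as $q^{-1} e_1 \oplus q\, e_1$. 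After delooping, the differentials become matrices whose entries are the images of the dotted cobordisms; the dotted cylinder acts on a delooped circle by sending the $q^{-1}$-summand to (a multiple of) the $q$-summand via the sphere-with-dot relation. The key computation is to track these maps and see that the resulting complex has the form in Lemma \ref{sim gaussian elimination} (simultaneous Gaussian elimination), with the relevant diagonal entries being isomorphisms $\pm 1_{e_1}$. Applying that lemma cancels everything in pairs, leaving the zero complex; the single $e_1$ coming from $C_0 \otimes e_1$ cancels against one of the delooped summands of $C_1 \otimes e_1$ through the saddle map $d_0$, which after delooping becomes an isomorphism onto one factor.

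\textbf{Main obstacle.} The routine part is bookkeeping the $q$-shifts and verifying the differentials; the genuinely delicate point is getting the \emph{signs} right after delooping so that the maps match the hypotheses of Lemma \ref{sim gaussian elimination}. The alternation between $\MPic{n2-tops} + \MPic{n2-bots}$ and $\MPic{n2-tops} - \MPic{n2-bots}$ in the differential of $P_2$ is precisely what is needed: under delooping, moving a dot across the saddle (as in the previous proposition) shows that one of the two terms in each differential vanishes on a given delooped summand while the other becomes $\pm 1$, and the alternating signs ensure that the isomorphisms line up in the alternating even/odd pattern ($a_{2i}$ and $e_{2i+1}$) required by the lemma. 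I expect verifying this alternation to propagate correctly to infinity — i.e. that no ``leftover'' term survives at any stage — to be the crux of the argument.
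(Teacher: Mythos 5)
Your proposal follows essentially the same route as the paper's proof: axioms (1) and (2) are read off from the explicit description, and axiom (3) is handled by noting the vertical symmetry reduces to a single turnback, delooping each term $\MPic{n2-s}\otimes e_1$ into two $q$-shifted copies of $e_1$, and applying simultaneous Gaussian elimination (Lemma \ref{sim gaussian elimination}) along the resulting sequence of identity components. The paper carries out the sign and degree bookkeeping you flag as the crux by writing the delooped differentials as explicit $2\times 2$ matrices $M_0, M_1, M_2$ and reading off the identity entries, but the method and the key lemma invoked are the same as yours.
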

\begin{proof}
  Since the identity object only appears in degree $0$ and the chain complex
  is positively graded with degree zero differentials, axioms 1 and 2 are
  satisfied by definition.  For axiom 3, note that there is only one
  standard generator $e_1 \in \TL_2$ and the vertical symmetry in the
  definition of $P_2$ implies $P_2\otimes e_1 \cong e_1\otimes P_2$. Consider $e_1 \otimes P_2$:

$$\begin{diagram}
\CPic{p2capped} \!\!\! & = & \!\!\! \CPic{n2-hs} &
\rTo^{\MPic{n2-s-sad}} &
q \CPic{n2-capbub} &
\rTo^{\MPic{n2-capbub-tops} \!\!\!- \MPic{n2-capbub-bots}} &
q^{3} \CPic{n2-capbub} &
\rTo^{\MPic{n2-capbub-tops} \!\!\! + \MPic{n2-capbub-bots}} &
q^{5} \CPic{n2-capbub} &  \!\!\! \!\!\!\cdots
\end{diagram}$$

The top strand of $e_1$ has been omitted in the illustration above.  Now we ``deloop'' (see
remark in section \ref{deloop}) and conjugate our differentials by the isomorphism
$\varphi$ in section \ref{categorified TL section} to obtain the isomorphic
complex

$$\begin{diagram}
\CPic{n2-hs} \!\!\! &
\rTo^{\!\!\! M_0} &
q^{0} \CPic{n2-hs} \!\!\! \oplus\,\, q^{2} \CPic{n2-hs} \!\!\!&
\rTo^{M_1} &
q^{2} \CPic{n2-hs} \!\!\! \oplus\,\, q^{4} \CPic{n2-hs} \!\!\!&
\rTo^{M_2} &
q^{4} \CPic{n2-hs} \!\!\! \oplus\,\, q^{6} \CPic{n2-hs}\!\!\! &  \cdots
\end{diagram}$$

where
$$
M_0 = \left(\begin{array}{c}
\MPic{n2-hs} \\
\MPic{n2-hs-dot}
\end{array} \!\!\!\! \right)  \hspace{.5cm}
M_1 = \left(\begin{array}{cc}
-\MPic{n2-hs-dot} & \MPic{n2-hs}\\
\alpha\MPic{n2-hs} & -\MPic{n2-hs-dot}
\end{array}
\!\!\!\!\right) \hspace{.5cm}
M_2 = \left(\begin{array}{cc}
\MPic{n2-hs-dot} & \MPic{n2-hs}\\
\alpha\MPic{n2-hs} & \MPic{n2-hs-dot}
\end{array}
\!\!\!\!\right)  \\
$$

Applying lemma \ref{sim gaussian elimination} (simultaneous Gaussian
elimination) using $C_* = 0$ and selecting for $a_{2i}$ and $e_{2i+1}$ the
first component of $M_0$ and the identity map in the upper righthand component
of each successive matrix shows that the complex is homotopic to the zero
complex.
\end{proof}

\subsubsection{Homology of the Trace} \label{homology subsection}

In the Temperley-Lieb algebra the trace of any diagram $D \in \TL_n$ is
defined to be the element $\tr(D)\in \mathbb{Z}[q,q^{-1}]$ associated with
the diagram obtained by connecting each of the bottom boundary points to the
corresponding top points by parallel arcs in the plane:
$$\tr(D) = \BPic{traceD}$$

The Jones-Wenzl projectors, $p_n \in \TL_n$ are commonly known to
satisfy

$$\tr(p_n) = [n+1]$$

In fact, they can be characterized by this property together with the
turnback axiom 3 of definition \ref{universal projector def}.  One would
expect then that the graded Euler characteristic of the complex given by the
trace of the universal projectors defined in this document are given by the
polynomials $[n+1]$.

This is true when the coefficient ring is rational and $\alpha = 0$. It is
however \emph{not true} that the homology of $\tr(P_2)$ is spanned
\emph{only} by classes that correspond to coefficients of the graded Euler
characteristic; the homology contains infinitely many terms which cancel in
the graded Euler characteristic.

When $\alpha = 0$ and coefficients are rational,

$$
\Homology_n(\tr(P_2);\Mb{Q}) =
\left\{
\begin{array}{ll}
q^{-2}\Mb{Q} \oplus q^0\Mb{Q} & n = 0\\
0                                   & n = 1\\
q^{4k-2}\Mb{Q}                    &n = 2k \normaltext{ and } k >0\\
q^{4k+2}\Mb{Q}                    & n = 2k+1 \normaltext{ and } k > 0\\
\end{array}
\right.
$$

Note that the graded Euler characteristic equals $[3] = q^{-2} + 1 +
q^{2}$. All other terms cancel in pairs.

If $\alpha = 0$ and the coefficient ring is integral then there is an
additional infinite family of 2-torsion. If $\alpha \ne 0$ and the
coefficient ring is $\mathbb{Q}$ then the homology of $P_2$ is
$2$-dimensional and isomorphic for any choice of $\alpha$. If $\alpha
\in \mathbb{Z}_+$ and the coefficient ring is $\mathbb{Z}$ then there is an
infinite family of $2$-torsion \emph{and} an infinite family of
$2\alpha$-torsion. In particular, the homotopy type of the projectors is
not constant with respect to the deformation parameter $\alpha$.

Taking the trace of our projector yields a complex with alternating
differential:
$$\begin{diagram}
\CPic{circlecircle} &
\rTo^{\hspace{.2in}\MPic{circlecircle-sad}\hspace{.2in}} &
q \CPic{circle} &
\rTo^{\hspace{.2in}0\hspace{.2in}} &
q^{3} \CPic{circle} &
\rTo^{\hspace{.2in}2 \MPic{circle-dot}\hspace{.2in}} &
q^{5} \CPic{circle} & \rTo^{\hspace{.2in}0\hspace{.2in}} & \cdots
\end{diagram}$$

The homology of this complex is in general given by

$$
\Homology_n(\tr(P_2)) =
\left\{
\begin{array}{lr}
q^{-2}\mathbb{Z} \oplus q^0\mathbb{Z} & n = 0, \alpha = 0\textnormal{ or } \alpha \ne 0\\
0                                   & n = 1, \alpha = 0\textnormal{ or } \alpha \ne 0\\
q^{4k-2}\mathbb{Z}                    &n = 2k, \alpha  = 0\\
q^{4k+2}\mathbb{Z} \oplus q^{4k}\mathbb{Z}/2      & n = 2k+1, \alpha = 0\\
0                                   & n = 2k, \alpha \ne 0\\
q^{4k+2}\mathbb{Z}/(2\alpha) \oplus q^{4k}\mathbb{Z}/2      & n = 2k+1, \alpha \ne 0\\
\end{array}
\right.
$$

\medskip

\subsection{The Third Projector}

We give an inductive definition of the chain complex for the $n$th projector
in section \ref{proof section} below, with the second projector defined
above as the base of the induction. The third projector $P_3$ can therefore
be deduced from that inductive definition.  In this section we present a
minimal (in the sense that it is not homotopic to a chain complex containing
fewer Temperley-Lieb diagrams) chain complex for $P_3$.  The third projector
is the last which can be written down in a short and explicit diagrammatic
form. After the initial identity term the complex below becomes $4$
periodic.

$$\begin{diagram}
\MPic{n3-1}\, & \rTo^A &\, q^{1}\left(\! \MPic{n3-e1}\,\, \oplus \MPic{n3-e2}\!\right) & \rTo^B &\, q^{2}\left(\! \MPic{n3-e1e2}\,\, \oplus \MPic{n3-e2e1}\!\right) & \rTo^C &\, q^{4}\left(\! \MPic{n3-e1e2}\,\, \oplus \MPic{n3-e2e1}\!\right) \\
  &  &           &   &         &   &  \dTo^D\\
\cdots & \lTo & \, q^{8}\left(\! \MPic{n3-e1e2}\,\, \oplus \MPic{n3-e2e1} \!\right) & \lTo^B & \, q^{7}\left(\! \MPic{n3-e1}\,\, \oplus \MPic{n3-e2}\!\right) & \lTo^E & \, q^{5}\left(\MPic{n3-e1}\,\, \oplus \MPic{n3-e2}\!\right) \\
\end{diagram}$$

Where $A = \left(\begin{array}{cc} \! -\MPic{n3-e1-sad} & \MPic{n3-e2-sad} \end{array}\right)^{\rm tr}$ and

\begin{align*}
B &= \left(\begin{array}{cc}
\MPic{n3-B-00} & - \MPic{n3-B-01}\\
-\MPic{n3-B-10} & \MPic{n3-B-11}\\
\end{array}
\right)  &
C &= \left(\begin{array}{cc}
\MPic{n3-e1e2-dot-u} + \MPic{n3-e1e2-dot-d} &  \MPic{n3-C-01}\\
\MPic{n3-C-10} & \MPic{n3-e2e1-dot-u} + \MPic{n3-e2e1-dot-d}\\
\end{array}
\right) & \\
D &= \left(\begin{array}{cc}
\MPic{n3-D-00} & - \MPic{n3-D-01}\\
-\MPic{n3-D-10} & \MPic{n3-D-11}\\
\end{array}
\right) &
E &= \left(\begin{array}{cc}
\MPic{n3-e2-dot-u} + \MPic{n3-e2-dot-d} &  \MPic{n3-E-01}\\
\MPic{n3-E-10} & \MPic{n3-e1-dot-u} + \MPic{n3-e1-dot-d}\\
\end{array}
\right) & \\
\end{align*}

\begin{theorem}
  The definition of $P_3$ given above is a chain complex that satisfies the
  axioms of the universal projector. In particular,
$$e_i\otimes P_3 \simeq 0 \simeq P_3\otimes e_i\hspace{1in}i = 1,2.$$
\end{theorem}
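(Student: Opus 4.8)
The plan is to verify the three axioms of Definition~\ref{universal projector def} directly from the explicit four-periodic complex above. Axioms~(1) and~(2) are immediate from the shape of the complex: the differentials $A,B,C,D,E$ are prescribed matrices of degree-zero cobordisms, the $q$-gradings are $0,1,2,4,5,7,8,\dots$ and hence nonnegative and monotone, the identity diagram $\MPic{n3-1}$ occurs only in homological degree $0$ and only once, and no later chain group contains a summand isomorphic to the identity (each $P_k$ for $k>0$ is a sum of the diagrams $e_1, e_2, e_1e_2, e_2e_1$, none of which is the identity in $\Cob(3)$). So the substance of the proof is (i) checking that $d^2=0$, i.e.\ $B\circ A=0$, $C\circ B=0$, $D\circ C=0$, $E\circ D=0$, $B\circ E=0$ and that the period-$4$ pattern then repeats; and (ii) establishing the turnback axiom~(3), namely $e_i\otimes P_3\simeq 0\simeq P_3\otimes e_i$ for $i=1,2$.

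For $d^2=0$ I would argue exactly as in the $P_2$ proposition: each composite entry of the matrix products $BA$, $CB$, etc.\ is a linear combination of composed cobordisms, and these reduce to $0$ (or cancel in pairs) using the local relations of Section~\ref{categorified TL section} — isotopy of cobordisms, the sphere/dotted-sphere evaluations, the neck-cutting relation, and the rule that two dots on a sheet become multiplication by $\alpha$. The signs built into the off-diagonal entries of $B$ and $D$ are chosen precisely so that the two surviving $\alpha$-terms (as in the $P_2$ computation $d_{2n+1}\circ d_{2n}=\alpha\,\MPic{n2-s} + 0 - \alpha\,\MPic{n2-s}=0$) cancel. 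This is routine but must be done for each of the finitely many composites; by four-periodicity it suffices to check one full period.

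The turnback axiom~(3) is the main obstacle and the heart of the theorem. By the left–right symmetry of the complex (reflection across the horizontal axis interchanges $e_1\leftrightarrow e_2$ and the ``up/down'' dotted saddles), it suffices to treat, say, $e_1\otimes P_3$; and by the $e_1\leftrightarrow e_2$ symmetry within $P_3$ one of $i=1$ and $i=2$ reduces to the other. So the plan is: tensor the whole complex on the left with $e_1$, apply the delooping isomorphisms $\varphi,\psi$ of Section~\ref{categorified TL section} to remove every circle created by capping off a strand of $e_1$ against $P_3$ (turning each chain group into a direct sum of shifted copies of a single diagram, namely $e_1$ itself, as happened for $e_1\otimes P_2$ in the proof of Theorem~\ref{secondproj thm}), and then conjugate the differentials by these isomorphisms to obtain an explicit matrix complex built from copies of $\MPic{n3-e1}$. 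One then identifies, in each matrix, an isomorphism entry (an identity cobordism up to sign) and applies Lemma~\ref{sim gaussian elimination} (simultaneous Gaussian elimination) repeatedly — alternating between even and odd homological degrees, using the four-periodicity to see that the cancellation propagates to infinity — collapsing the entire complex to $0$. The bookkeeping is more involved than for $P_2$ because there are two diagrams in play ($e_1e_2$, $e_2e_1$) in half the degrees, so after delooping one gets $2\times 2$ or larger matrices and must check that enough identity entries survive the conjugation to feed Lemma~\ref{sim gaussian elimination}; but structurally it is the same argument, and the relations $e_1^2=[2]e_1$, $e_1e_2e_1=e_1$ from Section~\ref{TL section} (categorified: delooping plus a dotted cobordism) are exactly what make the identity entries appear.
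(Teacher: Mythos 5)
Your proposal follows the same route the paper takes: the paper's own proof is a single line stating that the argument is analogous to the proof of Theorem~\ref{secondproj thm} (the $P_2$ case), with the observation that the main theorem independently produces a universal projector for $n=3$; you fill in the details of that analogy (verify $d^2=0$ via the local cobordism relations, then tensor with $e_i$, deloop, and collapse via simultaneous Gaussian elimination). One small expositional slip to flag: you conflate the two relevant symmetries. Reflection across the \emph{horizontal} axis (top--bottom flip, the ``vertical symmetry'' of the $P_2$ proof) is what identifies $P_3\otimes e_i$ with $e_i\otimes P_3$; reflection across the \emph{vertical} axis (left--right flip) is what interchanges $e_1\leftrightarrow e_2$. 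You need both, applied separately, to reduce the four cases to the single case $e_1\otimes P_3$ — your parenthetical attributes the $e_1\leftrightarrow e_2$ swap to the horizontal-axis reflection, which is not what it does. The substance of the reduction is nevertheless correct once the two symmetries are kept distinct.
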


The proof is analogous to the proof of theorem \ref{secondproj thm}. The
main theorem also produces a universal projector $P_n$ for $n=3$.

\section{Reidemeister Moves and Graphical Calculus} \label{Reidemeister section}

In this section we define invariants of framed tangles obtained by applying
the $m$th projector to the strands of a cabling and showing that the
homotopy type of the result is invariant under Reidemeister moves 2 and 3.
These are categorifications of the invariants of higher representations of
$\quantsu$ corresponding to the colored Jones polynomial.

\begin{definition}
  Given $m\in\Mb{N}$ consider the category $\Kom_m(n)$ with objects

  $$\Obj(\Kom_m(n)) = \Obj(\Kom(n))$$

  To any object $D \in \Obj(\Kom(n))$ associate a chain complex $F(D)$ in
  the category $\Kom(mn)$ by replacing each strand in each diagram with $m$
  parallel strands composed with the $m$th projector.

  If $A$ and $B \in \Kom_m(n)$ are two objects then we define

  $$\Morph_{\Kom_m(n)}(A,B) = \Morph_{\Kom(mn)}(F(A),F(B))$$
\end{definition}

This can be illustrated by
$$\BPic{line} \hspace{-.4in}\mapsto \BPic{line-proj}\hspace{1in}\BPic{n2-cross}\hspace{-.2in} \mapsto \BPic{n2-cross-proj}$$

In the remainder of this section we wish to prove that the Reidemeister
moves and some standard graphical relations are satisfied up to homotopy.

\begin{lemma}{(Projector Isotopy)} \label{projector isotopy}
  A free strand can be moved over or under a projector up to homotopy. In
  pictures,
$$\BPic{line-proj-o1} \simeq \BPic{line-proj-o2}\hspace{1in}\BPic{line-proj-u1} \simeq \BPic{line-proj-u2}$$
\end{lemma}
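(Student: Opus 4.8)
The plan is to reduce the lemma to the defining turnback axiom (axiom 3) of the universal projector by resolving the crossings via the categorified skein relation and then cancelling the turnback terms. First I would set up notation: let $P = P_m \in \Kom(m)$ be the $m$th projector (produced by the main theorem), and consider the diagram $\BPic{line-proj-o1}$, a single free strand passing over the $m$-cabled projector box. Applying the skein relation from section \ref{categorified TL section} to each of the $m$ crossings where the free strand meets the cable, we express this diagram as the total complex of an $m$-fold iterated mapping cone whose vertices are indexed by subsets $S\subseteq\{1,\dots,m\}$: at vertex $S$ one has put a turnback (the $\MPic{n2-s}$ resolution) at the crossings in $S$ and the identity resolution elsewhere, with an overall $q$-shift depending on $|S|$. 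The key observation is that every vertex with $S\neq\emptyset$ contains a turnback arc inserted directly against the projector box, i.e. it is of the form $P_m\otimes e_i$ (or $e_i\otimes P_m$) tensored with the remaining free-strand data. By axiom 3, each such vertex is contractible.

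Next I would invoke the homotopy-theoretic machinery from section \ref{homotopy lemmas}: since the iterated cone is a chain complex of chain complexes in which every piece indexed by a nonempty $S$ is contractible, lemma \ref{big collapse} (big collapse), applied after collecting the nonempty-$S$ vertices appropriately, shows that the whole complex is homotopy equivalent to the single $S=\emptyset$ vertex, which is precisely the free strand drawn disjointly to one side of the projector, namely $\BPic{line-proj-o2}$ up to isotopy. More carefully, one filters the cube of resolutions by $|S|$ and repeatedly applies Gaussian elimination (lemma \ref{gaussian elimination}) or its simultaneous version (lemma \ref{sim gaussian elimination}) to strip off the acyclic summands; this is the same mechanism used in the proof of theorem \ref{secondproj thm}. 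The case of the free strand passing \emph{under} the projector is identical, using the other skein resolution (the $q^{-1},q^{-2}$ version), and the two ``over'' and ``under'' pictures are then both homotopy equivalent to the disjoint-strand diagram, hence to each other.

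The main obstacle I anticipate is bookkeeping rather than conceptual: one must check that, after resolving the $m$ crossings on one side of the box, the turnback $e_i$ produced at each nonempty resolution really does land adjacent to the projector (so that axiom 3 applies) rather than being separated from it by other strands — this follows because the free strand interacts only with the outermost cable strand at each crossing, and a turnback there can be isotoped onto the projector's boundary, but it requires drawing the cube of resolutions carefully. A secondary subtlety is ensuring the homological and $q$-degree shifts introduced by the iterated cone are tracked so that one stays within $\Kom^0(mn)$ and the contractions of section \ref{homotopy lemmas} are legitimate; the monotonicity remark at the end of section \ref{catdiscuss} handles this. Finally, one should note that the ``over'' diagram may a priori differ from the ``under'' diagram by more than planar isotopy, but since both collapse to the same free-strand-plus-projector diagram, transitivity of $\simeq$ closes the argument.
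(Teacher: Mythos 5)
Your proposal takes a genuinely different route from the paper's, and unfortunately it contains a gap at the central step. You resolve the $m$ crossings between the free strand and the cable, obtaining a cube of planar diagrams indexed by $S\subseteq\{1,\dots,m\}$, and you claim that each vertex with $S\ne\emptyset$ is of the form $P_m\otimes e_i$ (or $e_i\otimes P_m$) tensored with ambient data, hence contractible by axiom~(3). This is not the case. When a crossing between the free strand and cable strand $i$ is resolved as a turnback, the resulting cap and cup each join the free strand to cable strand $i$; they do not join cable strand $i$ to cable strand $i\pm 1$. After the resolution the projector's $i$th input is connected by an arc to a free-strand boundary point, while its other inputs go straight to the cable's boundary. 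There is no arc joining two adjacent inputs of $P_m$, so the vertex is not of the form $e_j\otimes P_m$ and axiom~(3) does not apply. Your anticipated ``secondary subtlety'' (that the turnback lands adjacent to the projector) is precisely where the argument fails, and the proposed fix --- that the free strand only interacts with the outermost cable strand --- is also not correct, since the free strand crosses all $m$ cable strands.

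The paper avoids this entirely by introducing an intermediate object: the diagram $C$ with \emph{two} copies of the projector, one on each side of the free strand. Expanding (say) the top copy of $P_m$ as a chain complex of chain complexes, the homological-degree-zero piece is the identity, which gives exactly the one-projector diagram with the projector on the far side; every higher-degree piece of the expansion contributes a genuine $e_j$, a turnback between \emph{two adjacent cable strands}, sitting at the top of the cable. That $e_j$, together with the free-strand crossings and the second copy of $P_m$ below, is contractible (the turnback slides past the free strand via the Reidemeister-2 invariance of the ambient theory and then annihilates $P_m$ by axiom~(3)), and lemma~\ref{big collapse} finishes the job. In other words, the relevant turnbacks must come from expanding the \emph{projector}, not from resolving the \emph{crossings}; the former produce $e_j$'s between cable strands, the latter do not. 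If you wish to salvage a crossing-cube argument you would still need the two-projector trick to kill the nonempty-$S$ vertices, at which point you have reproduced the paper's proof with extra bookkeeping.

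One smaller point: the lemma asserts $\BPic{line-proj-o1}\simeq\BPic{line-proj-o2}$ (both ``over'') and $\BPic{line-proj-u1}\simeq\BPic{line-proj-u2}$ (both ``under''); it does not compare an over-diagram with an under-diagram. Your closing sentence about ``the over diagram may a priori differ from the under diagram'' suggests a misreading of what is to be proved, though this is minor compared to the turnback issue above.
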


\begin{proof} The proof is similar to the proof of proposition
  \ref{composition of projectors} and corollary \ref{homotopy uniqueness
    corollary} in section \ref{main theorem section}. Specifically, observe
  that both the chain complex for the diagram with the projector below the
  strand and the chain complex for the diagram with the projector above the
  strand are chain homotopy equivalent to the chain complex $C$ for the
  diagram with {\em two} projectors: one above the strand and one below the
  strand.  This is true because expanding either of the two projectors in
  $C$ gives the identity diagram in degree zero and every other term
  involves a turnback, which is contractible when combined with the second
  copy of the projector.
\end{proof}

Applications of this lemma allow us to show that the Reidemeister moves are
satisfied by the category $\Kom_m(n)$.

\begin{theorem}
  The category $\Kom_m(n)$ contains invariants of framed tangles.
\end{theorem}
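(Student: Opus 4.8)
The plan is to reduce the statement that $\Kom_m(n)$ contains invariants of framed tangles to the verification that the Reidemeister moves $R2$ and $R3$ (the framed versions, omitting $R1$) hold up to homotopy after cabling by $m$ strands and inserting the $m$th projector. First I would recall that by the skein relation in section \ref{categorified TL section}, any tangle diagram $D$ with $2n$ boundary points already determines an object of $\Kom(n)$; applying the functor $F$ of the preceding definition, which replaces each strand by $m$ parallel strands composed with $P_m$, produces an object $F(D) \in \Kom(mn)$. The content is that $F(D)$ depends only on the framed isotopy class of the tangle, i.e. is unchanged up to chain homotopy equivalence under $R2$ and $R3$.

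The key steps, in order: (1) Observe that Khovanov's/Bar-Natan's original invariance already gives $R2$ and $R3$ homotopy equivalences at the level of the uncabled complexes in $\Kom(N)$; cabling by $m$ parallel strands is functorial (the $-\sqcup 1^{m-n}$ type inclusions and their iterates extend to $\Kom$), so we obtain homotopy equivalences between the $m$-cabled diagrams before inserting projectors. (2) Now insert a copy of $P_m$ on the group of $m$ strands at each original endpoint. Since chain homotopy equivalence is preserved by $\otimes$ with a fixed complex (tensoring with $P_m$ is a functor on the homotopy category), applying $\otimes P_m$ at each boundary bundle carries the cabled $R2$/$R3$ equivalences to the corresponding equivalences in $\Kom_m(n)$. (3) For the framing/$R1$-type issue, invoke Lemma \ref{projector isotopy} (projector isotopy): a free strand may be slid over or under a projector up to homotopy, which is exactly what is needed to move the cabled-and-projected strands past each other in the local pictures appearing in $R2$ and $R3$, so the moves localize correctly and the global homotopy equivalence is assembled by the usual ``local move implies global invariance'' argument (only finitely many crossings change, and everything outside a disk is untouched, so one tensors the local homotopy equivalence with the identity on the complement).

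I would then package this: define the invariant of a framed tangle $T$ to be the homotopy equivalence class of $F(D)$ for any diagram $D$ of $T$; steps (1)--(3) show this is well-defined, and functoriality under composition of tangles follows from $F$ being defined via $\otimes$, together with Proposition \ref{composition of projectors} and Corollary \ref{idempotency cor} (two adjacent copies of $P_m$ coming from stacking two tangles collapse to one, $P_m \otimes P_m \simeq P_m$), which is what makes the assignment compatible with the category structure.

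The main obstacle I expect is step (3): checking that the local homotopy equivalences for $R2$ and $R3$, which involve the projector $P_m$ sitting on bundles of strands that are being slid across one another, genuinely reduce to Lemma \ref{projector isotopy} and the Gaussian elimination lemmas rather than requiring a fresh computation. In particular $R3$ in cabled form involves a projector-decorated strand passing a crossing of two other projector-decorated bundles, and one must argue — as in the proof of Lemma \ref{projector isotopy}, by expanding a projector into its identity term plus turnback terms and killing the turnbacks against a second projector via axiom 3 of Definition \ref{universal projector def} — that this is homotopic to the configuration with the strand on the other side. Making that argument uniform across the finitely many cases of $R2$ and $R3$, and in both over/under variants, is the technical heart of the proof; everything else is formal.
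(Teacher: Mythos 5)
Your proposal uses the same three ingredients as the paper's proof — the projector isotopy lemma (Lemma \ref{projector isotopy}), the idempotence $P_m\otimes P_m\simeq P_m$ (Corollary \ref{idempotency cor}), and the underlying cabled Reidemeister-move homotopy equivalences in $\Kom(mn)$ — so this is essentially the same argument. One small repair to the logic: step (2) as stated produces homotopy equivalences between \emph{boundary-projected} cablings, which is not yet an equivalence between $F(D)$ and $F(D')$, since $F$ inserts a projector on \emph{every} strand, not just at the boundary; the real purpose of projector isotopy (and idempotence) is to slide those interior projectors across crossings and merge them so the local $R2$/$R3$ pictures reduce to projector-free cabled ones — not a ``framing/$R1$'' issue as labeled in step (3) — but since the content of your step (3) does carry out exactly this reduction, the proposal is correct once the motivation is restated.
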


\begin{proof}
Lemma \ref{projector isotopy} implies that up to homotopy projectors can be
slid under or over crossings. Corollary \ref{idempotency cor} implies that
up to homotopy any two projectors on the same strand can be replaced with
one projector.

For the second Reidemeister move,

$$\BPic{R2-1}\quad = \BPic{R2-2}\quad \simeq \BPic{R2-3} $$

The first equality is by definition. The homotopy equivalence follows from
the projector isotopy lemma and $P_n\otimes P_n \simeq P_n$.

$$\BPic{R2-3}\quad \simeq \BPic{R2-4}\quad \simeq \BPic{R2-5}\quad = \BPic{R2-6}$$

The first homotopy equivalence follows from the second Reidemeister move in
$\Kom(mn)$, the second follows from another application of $P_n\otimes P_n
\simeq P_n$.

The argument for the third Reidemeister move features the same ideas.

$$\BPic{R3-1} \quad = \BPic{R3-2}\quad \simeq\quad \BPic{R3-3} \quad\simeq \quad\BPic{R3-4} $$

$$\BPic{R3-4} \quad \simeq \BPic{R3-5}\quad = \BPic{R3-6} $$

Applying the definition to the standard Reidemeister 3 diagram we obtain a
diagram that looks like spaghetti which simplifies considerably up to
homotopy to a diagram in which the standard Reidemeister 3 homotopy in
$\Kom(mn)$ holds.
\end{proof}

Given a framed link $L$ the chain complex associated to $L$ in $\Kom_n(0)$
has the graded Euler characteristic of the positive $q$-power series
expansion of the $n$th colored Jones polynomial. This is a categorification
of the $n$th colored Jones polynomial. See section \ref{catdiscuss} for a
detailed discussion of what categorification means in this context.

\section{Spin Networks} \label{spin networks section}

In this section we describe how to associate to any spin network a chain
complex in a category defined using the universal projector of section
\ref{main theorem section}. Constructions involving four projectors are then
explored more thoroughly leading to a categorification of the 6j
symbols.

\subsection{Categories and Invariants}

Let $I=\Mb{N}$ be the set indexing the finite dimensional irreducible
representations of $\quantsu$. For any $n$-tuple $\mathbf{t} =
(i_1,\ldots,i_n) \in I^n$ define the invariants of the $n$-fold tensor
product by

$$\Inv(\mathbf{t}) = \Inv(V_{i_1} \otimes \cdots \otimes V_{i_n}) = \Morph_{\quantsu}(V_{i_1} \otimes \cdots \otimes V_{i_n},1)$$

This space is described by \emph{Temperley-Lieb diagrams} or unoriented
isotopy classes of 1-manifolds in a disk with boundary fixed in the boundary
of the disk and with boundary labeled by Jones-Wenzl projectors:
$p_{i_1}\sqcup p_{i_2} \sqcup \cdots \sqcup p_{i_n}$. See the illustration
below or also \cite{MR1280463, MR1403861}.

For any such $\mathbf{t} \in I^n$ the main theorem allows us to construct a
category $\Kom(\mathbf{t})$ with objects given by chain complexes obtained
from Temperly-Lieb diagrams with boundary labeled by universal projectors
and morphisms given by chain maps. When $\mathbf{t} = (a,b,c,d)$ there is an
associated picture,

$$\Obj(\Kom(\mathbf{t})) = \left\{ \BPic{abcddiag}\quad : \textnormal{ D is a Temperly-Lieb diagram }\right\}$$

The axiomatic correspondence between the Jones-Wenzl projector and the
universal projectors in this paper implies the following theorem.

\begin{theorem}
The category $\Kom(\mathbf{t})$ categorifies the invariants $\Inv(\mathbf{t})$.
\end{theorem}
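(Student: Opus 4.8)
The plan is to establish that $\Kom(\mathbf{t})$ categorifies $\Inv(\mathbf{t})$ by showing that $K_0(\Kom(\mathbf{t}))$ is isomorphic, as a $\mathbb{Z}[q^{-1}]\llbracket q \rrbracket$-module, to $\Inv(\mathbf{t})$, with the isomorphism induced by the graded Euler characteristic. First I would recall that $\Inv(\mathbf{t})$ has a standard basis given by Temperley-Lieb diagrams with boundary labeled by $p_{i_1}\sqcup\cdots\sqcup p_{i_n}$ — that is, unoriented $1$-manifolds in a disk connecting the boundary points (which are partitioned into groups of sizes $i_1,\dots,i_n$ by the projectors), with the classical relations: projectors absorb turnbacks, and closed circles evaluate to $[2]$. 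On the categorified side, an object of $\Kom(\mathbf{t})$ is a chain complex built from such a diagram $D$ by replacing each Jones-Wenzl projector $p_{i_k}$ with the universal projector $P_{i_k}$ (equivalently, cabling by $i_k$ strands and inserting the chain complex $P_{i_k}$ supplied by the main theorem \ref{main theorem}).

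Next I would define the Euler characteristic map $\chi\colon K_0(\Kom(\mathbf{t})) \to \Inv(\mathbf{t})$. Since every object lies in the monotone subcategory $\Kom^0$ (the universal projectors are positively graded with degree-zero differential, hence satisfy the monotonicity of section \ref{catdiscuss}), the graded Euler characteristic of any object is a well-defined power series, and because $K_0(P_{i_k}) = p_{i_k}$ (this is exactly the consequence of the universal projector axioms noted after definition \ref{universal projector def}), the Euler characteristic of the chain complex attached to $D$ is precisely the element of $\Inv(\mathbf{t})$ obtained by closing up $D$ with the classical projectors. This shows $\chi$ is well-defined on isomorphism classes and surjective, since it hits every basis diagram. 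For injectivity, I would argue that the categorified turnback relations and delooping precisely mirror the classical relations: any object of $\Kom(\mathbf{t})$ is homotopy equivalent — via the idempotence corollary \ref{idempotency cor}, the composition proposition \ref{composition of projectors}, and the turnback axiom $P_* \otimes e_i \simeq 0$ — to a direct sum of shifted copies of the basis diagrams (those Temperley-Lieb diagrams with no turnbacks touching a projector), with a closed circle delooping to $q^{-1}\emptyset \oplus q\,\emptyset$ as in section \ref{categorified TL section}. Hence $K_0$ is freely generated over $\mathbb{Z}[q^{-1}]\llbracket q\rrbracket$ by exactly the same basis that spans $\Inv(\mathbf{t})$, and $\chi$ is the identity on basis elements, so it is an isomorphism.

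The main obstacle will be making the reduction-to-basis step precise: one must show that inserting a universal projector $P_{i_k}$ along a group of strands and then applying a turnback from an adjacent diagram genuinely produces a contractible complex, and that this ``cleaning up'' can be done functorially enough to conclude that $K_0$ is free on the no-turnback diagrams. This is essentially a bookkeeping argument combining axiom 3 of the universal projector, the big collapse lemma \ref{big collapse}, and Gaussian elimination \ref{sim gaussian elimination}, in direct analogy with how proposition \ref{composition of projectors} was proved; but some care is needed because the diagrams here live in a disk rather than a rectangle, and the projectors sit on the boundary. I would handle this by observing that any turnback appearing in a Temperley-Lieb diagram either connects two points in the same projector-group — in which case it is absorbed and killed by axiom 3 — or connects points in different groups, in which case the diagram is already reduced; iterating and invoking the big collapse lemma finishes the argument. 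The remaining details (that $\Morph_{\Kom(\mathbf{t})}$ as defined gives an honest category, compatibility of $\chi$ with the module structure) are routine and parallel to the lemmas of section \ref{categorified TL section}.
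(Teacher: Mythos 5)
The paper offers essentially no proof of this theorem: it states only that ``the axiomatic correspondence between the Jones-Wenzl projector and the universal projectors implies the following theorem'' and points to section \ref{catdiscuss} for the $K_0$ framework. Your proposal fleshes out exactly that implicit argument — construct the Euler characteristic map using $K_0(P_{i_k}) = p_{i_k}$ (a consequence of the universal projector axioms, as noted after definition \ref{universal projector def}), check surjectivity by hitting the standard diagrammatic basis, and check injectivity using the turnback axiom and delooping — so the approach is the same.

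One step as written is imprecise: you claim every object of $\Kom(\mathbf{t})$ is homotopy equivalent to a \emph{direct sum} of shifted no-turnback diagrams. That is not what happens — these objects are (infinite) chain complexes, and the correct statement, proved in the lemma immediately following this theorem in the paper, is that $\Kom(a,b,c,d)$ is \emph{spanned} by the full subcategory $\mathcal{N}$ of diagrams with no disjoint circles and no projector capped by a turnback: every object is homotopy equivalent to a chain complex built from objects of $\mathcal{N}$, not to a direct sum of them. For the $K_0$ computation you do not even need the homotopy statement; since $K_0$ already identifies $[K_*]$ with $\sum_i (-1)^i[K_i]$, you can work directly at the level of chain groups, deloop circles there, and observe that projector-plus-turnback summands produce contractible subcomplexes whose alternating sum telescopes to zero under the monotonicity conditions defining $\Kom^0$. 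With that correction your argument is sound and matches the paper's intent.
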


See section \ref{catdiscuss} for a detailed discussion of what
categorification means in this context.

\subsection{6j Symbols} \label{6j section}

There is a standard way to \emph{resolve} (compare \cite[Chapter 4.1]{MR1280463})
a trivalent vertex with edges labeled by $a,b,c \in I$ in a spin
network:

$$\BPic{abctri} \quad = \BPic{resabctri}$$

where $i = (a+b-c)/2$, $j = (a+c - b)/2$ and $k = (b+c-a)/2$ are the number
of unoriented intervals placed between the projectors. We say that a diagram
is \emph{admissible} if all trivalent vertices can be resolved using the
assigned labels or equivalently $a+b+c$ is even and the triangle inequalites
hold for $a$, $b$ and $c$. Using this notation we can describe two bases for
$\Inv(a,b,c,d)$,

$$V = \left\{\CPic{vertj} \quad : \begin{array}{c} j \in I\\ \textnormal{ admissible }\end{array}\right\} \textnormal{  and  } H = \left\{\CPic{horzi} \quad : \begin{array}{c} i \in I\\ \textnormal{ admissible } \end{array}\right\}$$

The base change coefficients are called \emph{6j symbols},

$$\BPic{horzi} = \sum_j \left\{\begin{array}{ccc} a & b & i \\ c & d & j \end{array}\right\} \BPic{vertj}$$

which determine the change of basis map $S : H \to V$. $S$ is the matrix of
$6j$ symbols,
$$S_{ij} = \left\{\begin{array}{ccc} a & b & i \\ c & d & j \end{array}\right\} $$

{\bf Outline.} Our goal is to categorify $S$ as a functor $\Kom(a,b,c,d) \to
\Kom(a,b,c,d)$. Our construction is modeled on the linear-algebraic proof
(\cite{MR1280463} chapter 7.2) that the ``vertical'' and ``horizontal''
collections $V,H$ above are indeed bases for the space of Temperley-Lieb
diagrams $\Inv(a,b,c,d)$, pictured on the previous page.  The key point is
that the identity diagram appears only once in the chain complex for the
projector $P_n$, in degree zero (axiom (1) in definition \ref{universal
  projector def} of the universal projector). Therefore, the identity
diagram may be represented up to homotopy as the cone of the inclusion of
the positive degree part into the chain complex $P_n$. However the positive
degree part may in turn be inductively represented as an iterated cone on
lower order projectors. This is made precise in the proof of theorem
\ref{spin theorem} below. We begin by introducing a categorical analogue of
a linear basis.

Before proceeding we recall a number of definitions. The concept we wish to
capture is that of a category which is homotopy equivalent to some
subcategory sitting inside of it. In our case this amounts to a category of
complexes in which every chain complex is homotopy equivalent to a chain
complex of chain complexes contained within the subcategory of interest. All
of the categories involved in our discussion will be categories of chain
complexes of direct sums of objects in a small additive category.

A subcategory $\mathcal{C} \subset \mathcal{D}$ is \emph{full} if for all
pairs of objects $A,B \in \Obj(\mathcal{C})$,

$$\Morph_{\mathcal{C}}(A,B) = \Morph_{\mathcal{D}}(A,B)$$

A category $\mathcal{C}$ is \emph{differential graded} if for all objects
$A,B \in \Obj(\mathcal{C})$, $\Morph_{\mathcal{C}}(A,B)$ is a chain
complex. A functor of differential graded categories $F: \mathcal{C} \to
\mathcal{D}$ is a \emph{differential graded functor} if the maps

$$F_{A,B} : \Morph_{\mathcal{C}}(A,B) \to \Morph_{\mathcal{D}}(F(A),F(B))$$

are chain maps for all objects $A,B \in \mathcal{C}$.  Two differential
graded functors $F,G : \mathcal{C} \to \mathcal{D}$ are homotopic, $F \simeq
G$ if there is a natural transformation $\varphi : F \to G$ such that
$\varphi_A$ is a homotopy equivalence for all $A \in \Obj(\mathcal{C})$. Two
differential graded categories $\mathcal{C}$ and $\mathcal{D}$ are
\emph{homotopy equivalent} if there exist differential graded functors $F :
\mathcal{C} \to \mathcal{D}$ and $G : \mathcal{D} \to \mathcal{C}$ such that
$FG \simeq 1_{\mathcal{D}}$ and $GF \simeq 1_{\mathcal{C}}$.

\begin{definition}
  If $\mathcal{A}$ is an additive category and $\mathcal{C} =
  \Kom(\mathcal{A})$ is the category of chain complexes of finite direct
  sums of objects in $\mathcal{A}$ then a full subcategory $\mathcal{B}
  \subset \mathcal{C}$ \emph{spans} $\mathcal{C}$ if the inclusion
  $\mathcal{B} \hookrightarrow \mathcal{C}$ is a homotopy equivalence of
  categories.
\end{definition}

\begin{lemma}
For any $a,b,c,d \in \mathbb{N}$ the category $\Kom(a,b,c,d)$ is naturally
spanned by the full subcategory $\mathcal{N}$:

$$\Obj(\mathcal{N}) = \left\{ \CPic{abcddiag} \quad : \begin{array}{c} \normaltext{D is a $\TL$ diagram which contains no disjoint circles}\\ \normaltext{and no projector is capped by a turnback} \end{array} \right\}$$
\end{lemma}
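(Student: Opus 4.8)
The plan is to prove the lemma in two independent reduction steps, corresponding to the two conditions defining $\mathcal{N}$: (i) every chain complex in $\Kom(a,b,c,d)$ is homotopy equivalent to one built from $\TL$ diagrams with no disjoint circles, and (ii) every such complex is further homotopy equivalent to one in which no projector is capped by a turnback. For (i), I would use delooping (remark \ref{deloop}): each disjoint circle in a diagram $D$ is replaced, via the isomorphisms $\varphi,\psi$ of section \ref{categorified TL section}, by a direct sum $q^{-1}\emptyset \oplus q\,\emptyset$, and the resulting identity summands in the differentials are removed by Gaussian elimination (lemma \ref{gaussian elimination}), or its simultaneous version (lemma \ref{sim gaussian elimination}) when infinitely many such cancellations occur down an unbounded complex. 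Since each circle strictly decreases a complexity count (number of circles) and delooping never creates new circles, iterating terminates diagram-by-diagram and produces a homotopy-equivalent complex with no disjoint circles. This construction is visibly natural and functorial in $\Kom(a,b,c,d)$, giving the homotopy equivalence of categories rather than merely of objects.

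For (ii), the key input is axiom (3) of the universal projector (definition \ref{universal projector def}): if a projector $P_m$ attached to some collection of strands in a diagram $D$ is capped by a turnback — i.e. $D$ factors through $P_m \otimes e_i$ or $e_i \otimes P_m$ for some generator $e_i$ — then that whole tensor factor is contractible, so by lemma \ref{big collapse} (big collapse) applied to the bicomplex $F(D)$, the diagram $D$ contributes a contractible subquotient and can be cancelled against neighboring terms. More precisely, I would expand the chain complex for each capped projector $P_m$ as a cone: by axiom (2) the identity diagram appears exactly once in $P_m$ in degree zero, so $P_m \otimes e_i$ has its degree-zero part equal to $1\otimes e_i = e_i$ (a turnback diagram), and every higher term of $P_m$ already contains a turnback; combining, $P_m\otimes e_i$ is contractible and its contraction can be used to delete the capped diagram from the ambient complex via Gaussian elimination. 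Doing this for all capped projectors, and interleaving with step (i) to clear any circles that appear, yields a homotopy-equivalent complex all of whose diagrams lie in $\Obj(\mathcal{N})$.

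The main obstacle is the bookkeeping for unbounded complexes: the diagrams in $\Kom(a,b,c,d)$ involve the infinite chain complexes $P_a,\dots,P_d$, so both reduction steps must be carried out simultaneously across infinitely many homological degrees, and one must check that the cancellations can be organized so that lemma \ref{sim gaussian elimination} applies (the relevant "block" maps being isomorphisms, arranged monotonically in $q$-degree as in $\Kom^0(n)$). I would handle this by filtering $F(D)$ by the homological degree of the expanded projectors and performing the eliminations in a single pass that is strictly decreasing in an appropriate complexity, exactly as in the proof of theorem \ref{secondproj thm}; the monotonicity condition on $\Kom^0(n)$ guarantees the resulting infinite composite of eliminations is well-defined. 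Naturality — that these reductions assemble into a genuine differential graded functor $\Kom(a,b,c,d) \to \mathcal{N}$ homotopy-inverse to the inclusion — follows because every move used (delooping, Gaussian elimination against a canonical isomorphism, big collapse) is defined by an explicit natural transformation, so the construction commutes with the morphism complexes.
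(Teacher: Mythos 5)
Your two inputs are the correct ones and match the paper's own proof: delooping via the isomorphisms $\varphi, \psi$ of section \ref{categorified TL section} to eliminate disjoint circles, and axiom (3) of definition \ref{universal projector def} to deal with projectors capped by turnbacks. The paper's argument is, however, much shorter than yours, and it is worth seeing why most of the machinery you invoke is unnecessary here. For step (i): the circle is an actual \emph{isomorphism}, $\MPic{circle}\cong q^{-1}\emptyset\oplus q\,\emptyset$ in $\Cob$, so the chain complex attached to a diagram $D$ with a disjoint circle is \emph{isomorphic} (not merely homotopy equivalent) to a finite direct sum of complexes attached to $D$ with the circle deleted and $q$-shifted. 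No Gaussian elimination, simultaneous or otherwise, is needed; there is no differential to clean up, since the circle is a tensor factor that does not interact with the projector differentials. For step (ii): if a projector is capped by a turnback, then by (3)(a) or (3)(b) the \emph{entire object} of $\Kom(a,b,c,d)$ is contractible. One does not need big collapse or any cancellation "against neighboring terms" -- there is no ambient bicomplex to collapse, because you are dealing with a single object of the category, not with a complex of such objects. The paper simply defines $\pi$ by iteratively removing circles (an isomorphism) and sending turnback-capped diagrams to $0$ (a homotopy equivalence), and then observes $\pi i = 1_{\mathcal N}$ and $i\pi \simeq 1_{\Kom(a,b,c,d)}$.

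Your proposal would likely still produce a correct proof, since the extra Gaussian eliminations you describe reduce to isomorphisms or trivialities in the cases at hand, but the "obstacle" you identify -- bookkeeping over infinitely many homological degrees, filtering by degree, interleaving the two steps, appealing to monotonicity in $\Kom^0(n)$ -- is imaginary at this stage. You are importing the difficulty of the main theorem (section \ref{proof section}, where the full $\CFK$ machinery really is needed) into a lemma whose content is just that the two generating moves are an isomorphism and a contraction. It would be worth going back and recognizing that the definition of $\mathcal N$ is chosen precisely so that each excluded configuration can be removed in one step at the level of a single object, which is what makes $\pi$ straightforward to define and the homotopy equivalence of categories immediate.
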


\begin{proof}
Definition \ref{universal projector def} (3) (a) and (3) (b) implies that
projectors with turnbacks are contractible. Using the isomorphisms of
section \ref{categorified TL section} any complex associated to a diagram
with disjoint circles is isomorphic to a finite direct sum of chain
complexes associated to diagrams without circles. This defines a functor
$\pi : \Kom(a,b,c,d) \to \mathcal{N}$ which together with the obvious
inclusion functor $i : \mathcal{N} \hookrightarrow \Kom(a,b,c,d)$ satisfy
$\pi i = 1_{\mathcal{N}}$ and $i \pi \simeq 1_{\Kom(a,b,c,d)}$.
\end{proof}

There are two other categories we would like to consider: $\mathcal{H}$ and
$\mathcal{V}$. These are the full subcategories of $\Kom(a,b,c,d)$ with
objects given by horizontal and vertical diagrams respectively.
$$\Obj(\mathcal{H}) = \left\{\CPic{horzi} \quad : \begin{array}{c} i \in I\\ \textnormal{ admissible } \end{array}\right\} \textnormal{  and  } \Obj(\mathcal{V}) = \left\{\CPic{vertj} \quad : \begin{array}{c} j \in I\\ \textnormal{ admissible }\end{array}\right\}$$

We can now state our theorem,

\begin{theorem} \label{spin theorem}
  For any $a,b,c,d \in I$ the full subcategories $\mathcal{H}$ and $\mathcal{V}$
  defined above span the category $\Kom(a,b,c,d)$.
\end{theorem}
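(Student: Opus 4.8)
The plan is to realize both $\mathcal H$ and $\mathcal V$ as spanning subcategories by exhibiting, for every diagram $D$ defining an object of $\mathcal N$, an explicit homotopy equivalence between the chain complex of $D$ and a chain complex built out of objects of $\mathcal H$ (resp.\ $\mathcal V$). Since the previous lemma already shows $\mathcal N$ spans $\Kom(a,b,c,d)$, it suffices to show $\mathcal H$ (and by a symmetric argument $\mathcal V$) spans $\mathcal N$, and for this it is enough to span each object of $\mathcal N$ separately, compatibly with morphisms. The mechanism is the one outlined before the theorem statement: axiom (1) of the universal projector says that in $P_n$ the identity diagram sits in homological degree zero and nowhere else, so there is a cofiber sequence $P_n^{>0}\hookrightarrow P_n \to 1$ exhibiting the identity strand-bundle $1^n$ as (up to homotopy) the cone of the inclusion of the positive-degree part $P_n^{>0}$ into $P_n$. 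Shifting this around, $1^n$ is homotopy equivalent to an iterated cone whose other constituents are the positive-degree terms of $P_n$, and these in turn — via the Frenkel--Khovanov style inductive structure of $P_n$ used in the main theorem — are built as iterated cones on diagrams factoring through lower-order projectors $P_m$, $m<n$.

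The main body of the argument is then a resolution-of-the-identity procedure applied to a generic diagram $D\in\Obj(\mathcal N)$ with four boundary projectors $P_a,P_b,P_c,P_d$. First I would pick, say, the ``horizontal'' channel: cut $D$ by a vertical arc separating the $a,b$ pair from the $c,d$ pair, and insert a pair of identity strand-bundles $1^{\ell}\otimes 1^{\ell}$ across the cut (where $\ell$ is the number of through-strands). Replacing one copy of $1^{\ell}$ by the cone description above and repeatedly resolving, one expands $D$ up to homotopy into an iterated cone (a finite filtration, by the monotonicity/boundedness built into $\Kom^0(n)$) whose graded pieces are diagrams in which the through-strands on the cut have been grouped by a single projector $P_i$ with $i$ admissible for $(a,b,\cdot)$ and for $(\cdot,c,d)$ — precisely the horizontal diagrams $\CPic{horzi}$, up to turnback-capped projectors which are contractible by axiom (3) and so may be discarded. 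This produces a functor $\Kom(a,b,c,d)\to\Kom(\mathcal H)$ (landing in chain complexes of objects of $\mathcal H$), and the inclusion $\mathcal H\hookrightarrow\Kom(a,b,c,d)$ composed with it is homotopic to the identity by construction; the reverse composite is the identity on the nose because the expansion, restricted to a single horizontal diagram, already terminates there. The argument for $\mathcal V$ is identical with the roles of the two pairs of boundary points exchanged (equivalently, cutting along a horizontal arc), giving the ``vertical'' resolution.

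The routine points — that cones of the relevant inclusions compute the identity up to homotopy, that the expansion is finite because of the boundedness in $\Kom^0(n)$ together with the positivity of $q$-degrees, that capped projectors are killed by Lemma on turnbacks, and that delooping removes the circles — all follow from results already in the excerpt (axioms (1)--(3) of Definition \ref{universal projector def}, the Gaussian elimination lemmas, Lemma \ref{big collapse}, and Proposition \ref{composition of projectors}). The step I expect to be the main obstacle is organizing the iterated-cone/resolution bookkeeping into an honest \emph{differential graded functor} rather than merely a term-by-term homotopy equivalence: one must check that the cone description of $1^n$ is natural enough that the resolution of a morphism $D\to D'$ in $\mathcal N$ is compatible with the resolutions of $D$ and $D'$, so that $\pi$ and $i$ assemble into dg-functors with $\pi i = 1$ and $i\pi\simeq 1$. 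Concretely, this means verifying that the choices made in the iterated cone (which projector to expand first, in what order to resolve the resulting turnbacks) can be made functorially, using the inductive structure of $P_n$ from section \ref{proof section}; once that naturality is in place, the homotopy equivalence of categories — i.e.\ the spanning statement — is immediate.
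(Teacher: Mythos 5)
Your proposal follows essentially the same strategy as the paper's proof: each diagram in $\mathcal{N}$ is represented up to homotopy as the cone of an inclusion $V_j \hookrightarrow (\text{spin network labeled }j)$, where $V_j$ is the positive-degree part of the expanded central projector $P_j$ (equivalently, the identity on the through-strands is replaced by $\Cone(P_j^{>0}\hookrightarrow P_j)$), and then $V_j$ is resolved into lower-labeled spin networks by induction on the number of through-strands, with turnback-capped projectors discarded as contractible. The functoriality of the substitutions, which you correctly flag as the delicate point, is handled in the paper by exactly the sort of change-of-basis argument you anticipate, namely an application of the Gaussian elimination lemma to the vertical identity maps in the iterated cone.
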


The proof consists of constructing a family of chain complexes $V_n$ and
$H_n$ each of which comes from the positive degree part of the chain complex
defining the $n$th universal projector $P_n$ constructed above. The gist of
the proof is captured by the two tables below in which
$(a,b,c,d)=(2,2,2,2)$.
$$\begin{array}{lr}
\underline{\underline{\mathcal{V}}} & \hspace{-.25in}\underline{\underline{\mathcal{N}}}\\
\Cone\left(V_4\left(\!\!\CPic{vert0}\;\,,\!\CPic{vert2}\;\,\right)\hookrightarrow \!\!\CPic{vert4}\;\,\right) & \simeq  \CPic{case0}\quad  \\
\Cone\left(V_2\left(\!\!\CPic{vert0}\;\,\right)\hookrightarrow \!\!\CPic{vert2}\;\,\right) & \simeq  \CPic{case1}\quad \\
\Cone\left(V_0 \hookrightarrow \!\!\CPic{vert0}\;\,\right) & \simeq  \CPic{case2}\quad  \\
\end{array}$$
\newline
$$\begin{array}{lr}
 \hspace{.25in}\underline{\underline{\mathcal{N}}}  & \underline{\underline{\mathcal{H}}} \qquad\\
\CPic{case0}\quad  \simeq &
\Cone\left(H_0 \hookrightarrow \!\!\CPic{horz0}\;\,\right)\\
\CPic{case1}\quad  \simeq & \Cone\left(H_2\left(\!\!\CPic{horz0}\;\,\right)\hookrightarrow \!\!\CPic{horz2}\;\,\right)\\
\CPic{case2}\quad  \simeq & \Cone\left(H_4\left(\!\!\CPic{horz0}\;\,,\!\CPic{horz2}\;\,\right)\hookrightarrow \!\!\CPic{horz4}\;\,\right)
\end{array}$$

In the first table, the third chain complex $\MPic{case2}$ can be
constructed as a cone on the vertical spin network $\MPic{vert0}$. When
$(a,b,c,d) = (2,2,2,2)$ and $V_0 = 0$ the spin network itself is equal to
$\MPic{case2}$. Next, to define the chain complex $V_2$
expand the central projector of $\MPic{vert2}$ and then substitute
instances of $\MPic{case2}$ with $\Cone(V_0 \hookrightarrow
\MPic{vert0})$. The chain complex $V_2$ consists only of the resulting terms
with the $0$ labeled edge. The cone on the inclusion of $V_2$ into
$\MPic{vert2}$ is homotopic to $\MPic{case1}$. Finally, the first line
states that the object $\MPic{case0}$ is homotopic to the cone on a chain
complex $V_4$ consisting of only vertical networks labeled $0$ and $2$. We
now give a proof of theorem \ref{spin theorem}.

\begin{proof}
  We will explain the construction only for $\mathcal{V}$ since the argument
  is the same for $\mathcal{H}$.  For fixed $a,b,c,d \in I$ each diagram $D$
  defining an object in the spanning subcategory $\mathcal{N}$ defined above
  must be of the form
$$D_{j} = \BPic{abcddiagj}\quad := \BPic{sqbox}$$

where $j$ is the total number of vertical strands and $i$ is the total number of
horizontal strands. Notice $i$ depends on $j$ and if the diagram is
admissible the number $j$ assumes either odd or even integer values between
two numbers $l_0, l_N \in \mathbb{Z}_+$. If we use $D_n$ to denote the
diagram with $j = n$ in the illustration above then for a given $a,b,c,d \in
I$ we have diagrams $D_{l_0}, D_{l_0 + 2}, \ldots, D_{l_N -2}, D_{l_N}$. The proof is
by induction on the number of vertical strands.

For each $a,b,c,d \in I$ and admissible $j$ we define the chain complex
$V_j$,
$$V_j = V_j\left(\!\!\!\!\!\BPic{abcddiagl0}\quad,\,\cdots\,,\!\BPic{abcddiagjminus2}\quad\right)$$

to be the subcomplex shown below

$$\BPic{vertj} = \BPic{abcddiagj} \to V_j\left(\!\!\!\!\!\! \BPic{abcddiagl0}\quad,\,\cdots\,,\BPic{abcddiagjminus2}\quad\right)\quad $$

obtained by expanding the central projector $P_j$. As our notation suggests
$V_j$ is a chain complex containing only contractible terms and objects of
$\mathcal{N}$ defined by diagrams $D_{l_0},\ldots, D_{j-2}$.

The preceding diagram implies that the homotopy equivalence below is
tautological:
$$\BPic{abcddiagj} \simeq \Cone\left(V_j\left(\!\!\!\!\!\!\BPic{abcddiagl0}\quad,\,\cdots\,,\BPic{abcddiagjminus2}\quad\right) \overset{i}{\hookrightarrow} \!\!\!\!\BPic{vertj}\right)$$

where $i$ is the inclusion of the subcomplex $V_j$ into the complex
representing the labeled graph. This inclusion exists by construction of
$V_j$ above. The diagrams appearing in $D_0,\ldots, D_{j-2}$ have fewer
vertical strands and are homotopic to diagrams containing only vertical spin
networks by induction.

$$\BPic{abcddiagj} \simeq \Cone\left(V_j\left(\BPic{vertl0},\,\cdots\,,\BPic{vertjminus2}\right) \overset{i}{\hookrightarrow} \!\!\!\!\BPic{vertj}\right)$$

Again $i$ is the inclusion of the subcomplex $V_j$ into the complex
representing the labeled graph. The proof that substitution works is an
application of the change of basis isomorphism used in the Gaussian
elimination lemma in section \ref{homotopy lemmas} to the first vertical
identity map below.
\begin{align*}
\BPic{abcddiagj} &= \Cone\left(
\begin{diagram}
D_j     & \rTo^{d_1} & C_1  & \rTo^{d_2} & C_2 & \rTo  & \cdots\\
\uTo  &             & \uTo^1 &            & \uTo^1 &  & \\
 0     & \rTo       & C_1  & \rTo^{d_2} & C_2  & \rTo &  \cdots
\end{diagram}
\right) & \\
&\cong \Cone\left(
\begin{diagram}
D_j     & \rTo^{0} & C_1  & \rTo^{d_2} & C_2 & \rTo  & \cdots\\
\uTo  &             & \uTo^1 &            & \uTo^1 &  & \\
 0     & \rTo       & C_1  & \rTo^{0} & C_2  & \rTo &  \cdots
\end{diagram}
\right) & \\
\end{align*}

Note that in the base case, expanding the projector $p_{l_0}$ yields only
contractible terms in degree greater than zero. The $a,b,c,d,l_0$ labeled
network is homotopy equivalent to the cone on a nullhomotopic map of the
form above.

\end{proof}

\begin{remark}
  The naturally defined homotopy equivalence of categories $S:
  \mathcal{H} \to \mathcal{V}$ may be viewed as a categorical analogue of the matrix of 6j symbols
  $S : H \to V$ defined in section \ref{6j section}. See \ref{catdiscuss}
  for further discussion of what categorification means in this context.
\end{remark}

The quantum reader is invited to prove the homotopy
Biedenharn-Elliot identity.

\section{Proof of the Main Theorem} \label{proof section}

The two term recurrence relation satisfied by the Jones-Wenzl projectors in
section \ref{JW projectors} is quadratic in the sense that in order to define
$p_n$ the $n-1$st projector $p_{n-1}$ appears twice in the second term. One obtains the
linear recurrence of Frenkel and Khovanov \cite{MR1446615} by expanding
the bottom $p_{n-1}$ term completely and removing terms containing a turnback
$p_{n-1}e_i$ for any $0< i < n-1$. Keeping track of the
coefficients in this process gives the recurrence
$$\BPic{pn} \!= \!\BPic{pnm1parp} -\quad \frac{[n-1]}{[n]}\! \BPic{pnKF2} +\,\, \cdots\,\, \pm \quad \frac{[1]}{[n]}\! \BPic{pnKFn}$$

Note that our sign conventions differ from \cite{MR1446615}.  This can be
shown to satisfy the axioms (1)-(3) in section \ref{JW projectors} and so is
equal to the Jones-Wenzl projector. In this section we prove the main
theorem of the paper by constructing a chain complex in the category
$\Kom(n)$, motivated by the Frenkel-Khovanov recursive formula above,
satisfying the axioms of the universal projector given in section \ref{main
  theorem section}.

\subsection{Triples and Quadruples} \label{triples and quadruples}

We will begin by examining some situations in which local cancelations can
be made in a chain complex containing a turnback: $C_* \otimes e_i$. There
are two important cases: either a sequence of three terms can be canceled
after delooping the middle term or a sequence of four terms can be canceled
after delooping the two middle terms. We will call the first case a
\emph{triple} and the second a \emph{quadruple}. Both cases are necessary to
prove $P_3 \otimes e_i \simeq 0$ and, as we will see, they suffice to prove
the general case.

\subsubsection{The Triple}\label{triple}

\begin{definition}
  If $D \in \Kom(n)$ is any chain complex and $e_i$ is a standard generator
  of $\TL_n$ then an \emph{$i$-triple} or \emph{triple} is a sequence of
  maps in $\Kom(n)$ of the form

$$ \begin{diagram}
 D & \rTo  & q D \otimes e_i  & \rTo  & q^2 D \otimes e_i \otimes e_{i \pm 1}
\end{diagram} $$

\end{definition}

 Where the maps are given by saddles, as illustrated below:
$$\begin{diagram}
\BPic{triple1} & \rTo^{\CPic{triple1-m}\quad} & q \BPic{triple2} & \rTo^{\CPic{triple2-m}\quad} & q^2 \BPic{triple3}
\end{diagram}$$

Applying the functor $-\otimes e_i$ to the above yields the top sequence in
the following commutative diagram:

$$
\begin{diagram}
  D\otimes e_i & \rTo  & q D \otimes e_i \otimes e_i & \rTo  & q^2 D \otimes e_i \otimes e_{i \pm 1} \otimes e_i \\
\dTo^{\cong} & & \dTo^{\cong} & & \dTo^{\cong}\\
 D\otimes e_i & \rTo^{1 \oplus -}  & (D \otimes e_i) \oplus q^2 (D\otimes e_i) & \rTo^{- \oplus 1}  & q^2 D \otimes e_i
\end{diagram}$$

After applying $-\otimes e_i$ to an $i$-triple, the middle term can be
delooped (see remark \ref{deloop}) yielding the second isomorphism. The
last term satisfies the categorified planar isotopy relation 2 of section
\ref{TL section}, yielding the bottom sequence in the diagram above.  Note
that if this triple is part of a chain complex, then it can be canceled
using two applications of the Gaussian elimination (lemma \ref{gaussian
  elimination} in section \ref{homotopy lemmas}) or by a single application
of the simultaneous Gaussian Elimination (lemma 2).

\subsubsection{The Quadruple}\label{quadruple}

\begin{definition}
  If $D \in \Kom(n)$ is any chain complex and $e_i$ an elementary
  generator of $\TL_n$ then an \emph{$i$-quadruple} or \emph{quadruple} is
  a sequence of maps in $\Kom(n)$ of the form
$$
\begin{diagram}
D & \rTo  & q D \otimes e_i  & \rTo^A  & q^3 D \otimes e_i &\rTo & q^4 D \otimes e_i \otimes e_{i \pm 1}
\end{diagram}
$$

\end{definition}

All maps are given by saddles except for the $q$-degree 2 map $A$,
$$ A = \BPic{triple2-m-quad}\quad -\BPic{triple2-m-quad2}$$

$A$ is given by subtracting the addition of a handle on one strand from the
addition of a handle on an adjacent strand. Although this will not affect
the arguments below, we will fix the convention that the dot is placed
either to the left or the right of the saddle in the second term, depending
upon whether the $e_{i+1}$ or $e_{i-1}$ saddle is used at the end of the
quadruple.  The entire sequence of maps can be pictured as

$$\begin{diagram}
\CPic{triple1} & \rTo^{\MPic{triple1-m}} & q \CPic{triple2} & \rTo^{\MPic{triple2-m-quad}-\MPic{triple2-m-quad2} } & q^3 \CPic{triple2} & \rTo^{\MPic{triple2-m}} & q^4 \CPic{triple3}
\end{diagram}$$

Applying the functor $-\otimes e_i$ yields the top row of the following diagram:

{\Small
$$
\begin{diagram}
  D\otimes e_i  & \rTo  & q D \otimes e_i \otimes e_i  & \rTo^A  & q^3 D \otimes e_i \otimes e_i  &\rTo & q^4 D \otimes e_i \otimes e_{i \pm 1} \otimes e_i  \\
  \dTo^{\cong} & & \dTo^{\cong} & & \dTo^{\cong} & & \dTo^{\cong} \\
 D\otimes e_i & \rTo^{1 \oplus - } & (D \otimes e_i) \oplus q^2 (D \otimes e_i)  & \rTo^B  & q^2 (D \otimes e_i) \oplus q^4 (D \otimes e_i) & \rTo^{- \oplus 1}  & q^4 D \otimes e_i
\end{diagram}
$$
}

in which $B$ is of the form,

$$B = \left(\begin{array}{cc} - & 1 \\ - & - \end{array}\right)$$

Compare to delooping in Section \ref{second projector section} and
remark \ref{deloop}.  Again note that if this quadruple is part of a
chain complex, then it can be canceled using three applications of the
Gaussian elimination (or a single application of the simultaneous Gaussian
Elimination).

\subsection{The Frenkel-Khovanov Sequence}\label{fk sequence section}
The Frenkel-Khovanov formula from the beginning of this section suggests the
following recursive definition

$$\begin{diagram}[size=2em,height=.75in]
\SPic{p1} & = & \SPic{line} \!\!\!\!\!\! &\!\!\!\!\!\!\! \textnormal{ , } & \SPic{p2box} \textnormal{ is defined in \ref{second projector section} and for $n>2$ } & \SPic{pn} & = & \\
\end{diagram}$$

$$\begin{diagram}[size=2em,height=.75in]
q^0 \SPic{pnm1parp} & \rTo^{\CPic{pnm1par-m}} & q^{1} \SPic{pnKF2} & \rTo^{\CPic{pnKF2-m}} & q^{2}\SPic{pnKF3} \\
                   &      &                     &      &    \dTo_{\CPic{pnKF3-m}}            \\
q^{n-2}\SPic{pnKFnm1} & \lTo^{\CPic{pnKFnm1-m}} &    \cdots       & \lTo^{\CPic{pnKF4-m}} & q^{3}\SPic{pnKF4}\\
\dTo^{\CPic{pnKFnm1-m2}}  &      &                 &      &                   \\
q^{n-1}\SPic{pnKFn}   & \rTo^{\CPic{pnKFn-md1}\,\,-\CPic{pnKFn-md2}} & q^{n+1} \SPic{pnKFn} & \rTo^{\CPic{pnKFn-m}} & q^{n+2} \SPic{pnKFnm1} \\
                      &      &              &      & \dTo_{\CPic{pnKFnm1-m3}} \\
q^{2n-2}\SPic{pnKF3} & \lTo^{\CPic{pnKFnm2-m2}} & \cdots       & \lTo^{\CPic{pnKFnm2-m}} & q^{n+3} \SPic{pnKFnm2} \\
\dTo^{\CPic{pnKF4-m2}}    &      &              &      &               \\
q^{2n-1} \SPic{pnKF2} & \rTo^{\CPic{pnKF2-d1}\,\,-\CPic{pnKF2-d2}} &  q^{2n+1} \SPic{pnKF2} & \rTo & \cdots\\
\end{diagram}$$

Although the proposition below implies that this definition behaves
"correctly" with respect to the turnback axiom (3) of the universal
projector (see definition \ref{universal projector def} in section \ref{main
  theorem section}), the composition of two saddles is not equal to zero
(although it is homotopic to zero). The technical heart of this paper
consists of a detour taken purely for the purpose of arriving at an actual
chain complex. The final formulation obtained in section \ref{fattening fk
  section} will amount to a version of the above which has been carefully
thickened by contractible summands. See definition \ref{CFK definition} and
the picture following it. The reader is encouraged to check that the graded
Euler characteristic of the sequence illustrated above is a formal power
series corresponding to the Frenkel-Khovanov recursion formula for $p_n \in
\TL_n$.

In order to formalize the definition above consider the category \Mb{N}
determined by the graph

$$\begin{diagram}
0 & \rTo^{d^0} & 1 & \rTo^{d^1} & 2 & \rTo^{d^2}  & \cdots
\end{diagram}$$

The objects of \Mb{N} are non-negative integers and the morphisms are freely
generated by compositions of identity morphisms $1_i : i \to i$ and
morphisms $d^i : i \to i+1$.

\begin{definition}
  A \emph{sequence} $F$ in $\Kom(n)$ is a functor:
$$F : \Mb{N} \to \Kom(n)$$
\end{definition}

For each $n \geq 1$, we will define a sequence $\KF_n : \Mb{N} \to
\Kom(n)$. $\KF_n(k)$ will correspond to the bottom of each diagram in the
illustration on the previous page (the illustration itself equals
$(P_{n-1}\sqcup 1)\otimes \KF_n$). After the initial identity diagram,
$\KF_n$ is $2(n-1)$ periodic. The following is an algebraic definition of
the diagrams pictured above.

\begin{definition}\label{fk sequence}

  If $m \in \mathbb{Z}_+$ write $m = 2(n-1)q + r$ with $0 < r \leq 2(n-1)$
  then the $m$th diagram of the $n$th \emph{Frenkel-Khovanov sequence} is
  defined by

$$\KF_n(m) =
\left\{
\begin{array}{ll}
 1 & \textnormal{ if } m = 0 \\
q^m e_{n-1}\otimes\ldots\otimes e_{n-m}& \textnormal{ if } 1 \leq m < n \\
q^{2(m-n+1)} \KF_n(2n-m-1) & \textnormal{ if } n \leq m \leq 2(n-1) \\
q^{2n} \KF_n(r) & \textnormal{ otherwise }
\end{array}
\right.
$$

We use the multiplicativity of the formal $q$-grading: $q^i (q^j D) =
q^{i+j} D$ for any $D \in \Kom(n)$. The differential between any two objects
whose $q$-degree differs by one is given by a saddle map. In each period the
two $q$-degree $2$ differentials are defined to be those illustrated in the
diagram above. (The degree $2$ maps in the sequence are separated by $n-2$
saddle maps). In what follows $f_m$ will be used to denote the differential,
$f_m\co \FK_n(m)\longrightarrow \FK_n(m+1)$.

For a given length $l \geq 0 $ the $n$th \emph{truncated Frenkel-Khovanov
  sequence} is given by

$$\KF_{n,l}(m) = \left\{
\begin{array}{ll}
 \KF_n(m) & \textnormal{ if } m \leq l \\
0 & \textnormal{ otherwise }
\end{array}
\right.
$$
\end{definition}

The following proposition and its corollary are key ingredients in the proof
of contractibility under turnbacks contained in section \ref{fattening fk
  section}.

\begin{proposition} \label{turnbacks pass through prop}  Let $\KF_n : \Mb{N} \to \Kom(n)$ be the $n$th Frenkel-Khovanov sequence
  defined above. Then for any standard generator $e_i$, $i=1,\ldots,n-1$,

  $$\KF_n( - ) \otimes e_i : \Mb{N} \to \Kom(n)$$

  is a sequence such that for every $k\in\Mb{N}$ the diagram $\KF_n(k)
  \otimes e_i \in\Kom(n)$ either
\begin{enumerate}
\item Satisfies a commutativity condition: there exists a Temperley-Lieb
  element $D \in \Cob(n)$ such that

$$\KF_n(k) \otimes e_i \cong e_j \otimes D$$

where $j=i,i-1,\textnormal{ or } i-2$ or

\medskip

\item Is contained in an $i$-triple or $i$-quadruple sequence.

\end{enumerate}
\end{proposition}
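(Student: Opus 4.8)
The plan is to establish the dichotomy by a finite computation in $\TL_n$, organised according to where the index $i$ sits relative to the ``turnback block'' of the diagram $\KF_n(k)$. The first step is to make Definition \ref{fk sequence} concrete: writing $E_m = e_{n-1}e_{n-2}\cdots e_{n-m}$ (so $E_0 = 1$), the underlying diagrams of $\KF_n$ are, after the initial term, the periodic word $E_1, E_2, \dots, E_{n-1}, E_{n-1}, E_{n-2}, \dots, E_1$ of period $2(n-1)$, and every structure map is a single saddle except for the two ``degree $2$'' maps of that definition, which occur at the two repetitions $E_{n-1}\to E_{n-1}$ and $E_1 \to E_1$. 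Two facts are recorded. First, in the descending stretch the identity $E_{m+1} = E_m \otimes e_{n-1-m}$ makes each three-term window $E_m \to E_{m+1}\to E_{m+2}$ literally an $(n-1-m)$-triple in the sense of \ref{triple}, and makes $1 \to E_1 \to E_2$ an $(n-1)$-triple. Second, adjoining to the degree $2$ map $E_{n-1}\to E_{n-1}$ the saddle on each side of it produces the four-term window $E_{n-2}\to E_{n-1}\to E_{n-1}\to E_{n-2}$, which --- using $E_{n-2}\otimes e_1 = E_{n-1}$ (a concatenation) and $E_{n-1}\otimes e_2 = E_{n-2}$ (the Temperley--Lieb relation $e_2 e_1 e_2 = e_2$) --- is a $1$-quadruple in the sense of \ref{quadruple}.

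Now fix $i$ and a term $\KF_n(k)$, which equals some $E_m$ up to a power of $q$; since the diagrams of $\KF_n$ repeat periodically it suffices to treat the $n$ diagrams $E_0, \dots, E_{n-1}$. The index set of $E_m$ is $\{n-m, \dots, n-1\}$, and the verification splits into three ranges for $i$. If $i \leq n-m-2$, then $e_i$ commutes past every factor of $E_m$, so $\KF_n(k)\otimes e_i \cong e_i \otimes E_m$ and case (1) holds with $j = i$. If $i \geq n-m+2$, then $e_i$ lies strictly inside the block; the relation $e_i e_{i-1}e_i = e_i$ collapses $E_m e_i$ to $E_m$ with the single factor $e_{i-1}$ deleted, and then all remaining factors of index $\leq i-2$ commute to the front past those of index $\geq i$, so $\KF_n(k)\otimes e_i \cong e_{i-2}\otimes D$ and case (1) holds with $j = i-2$. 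In the remaining range $i \in \{n-m-1,\ n-m,\ n-m+1\}$ the product $E_m e_i$ equals $E_{m+1}$, $[2]E_m$ (a circle, to be removed by delooping), or $E_{m-1}$ respectively --- none of the required form --- and instead one invokes the first fact: the diagram $E_m$ occurs, in the descending stretch, as the first, middle, or last term of, respectively, the $(n-m-1)$-triple $E_m\to E_{m+1}\to E_{m+2}$, the $(n-m)$-triple $E_{m-1}\to E_m\to E_{m+1}$, or the $(n-m+1)$-triple $E_{m-2}\to E_{m-1}\to E_m$, so case (2) holds --- subject to the boundary cases below.

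The step requiring care --- and the main obstacle --- is that the triples invoked in the last range only live inside the descending stretch, so for the two diagrams $E_{n-1}$ and $E_{n-2}$ with $i = 1$ the triple one wants would have to reach across the degree $2$ map $E_{n-1}\to E_{n-1}$ and hence does not exist. These cases are absorbed instead by the $1$-quadruple $E_{n-2}\to E_{n-1}\to E_{n-1}\to E_{n-2}$ of the second fact (in which $E_{n-2}$ is the first and last term and $E_{n-1}$ the two middle ones), and verifying that this window really has the shape of \ref{quadruple} is where the relation $e_\ell e_{\ell\pm1}e_\ell = e_\ell$ is used a second time and where the sign in the ``$e_{i\pm1}$'' of that definition must be tracked: the descending-stretch triples use $e_{i-1}$, while the window straddling the degree $2$ map forces $e_{i+1}$. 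Running the three ranges over all $m \in \{0, \dots, n-1\}$ and all $i \in \{1, \dots, n-1\}$, together with this quadruple and its periodic translates, is a bounded verification and gives the proposition; it also makes transparent that for $n = 3$ both a triple and a quadruple genuinely enter, consistent with the explicit form of $P_3$ in Section \ref{formulasec}.
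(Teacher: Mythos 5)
Your overall strategy is the same as the paper's: classify each term $e_{n-1}\cdots e_{n-m}$ of $\KF_n$ (your $E_m$) by how the index $i$ sits relative to the block $\{n-m,\dots,n-1\}$, use far commutativity (giving $j = i$) or the Temperley--Lieb relation $e_i e_{i-1}e_i = e_i$ (giving $j = i-2$) when $e_i$ is far from or deep inside the block, and fall back on triples or quadruples in the middle range $i \in \{n-m-1, n-m, n-m+1\}$. The triples you identify, and the check that $E_{n-2}\to E_{n-1}\to E_{n-1}\to E_{n-2}$ is a $1$-quadruple using $e_2 e_1 e_2 = e_2$, are correct and match the paper's computations in its cases (1) and (3).

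There is, however, a genuine gap. Each period of $\KF_n$ contains \emph{two} degree-$2$ maps: one at $E_{n-1}\to E_{n-1}$ in the middle of a period, and one at $E_1\to E_1$ between periods. You construct a quadruple only around the first. For the diagrams $E_1$ and $E_2$ with $i=n-1$, the triple you invoke, $E_0\to E_1\to E_2$, exists only in positions $0,1,2$: the identity $E_0 = 1$ never recurs. Both the $E_1, E_2$ on the decreasing side of the first period and all later occurrences of $E_1, E_2$ therefore require the $(n-1)$-quadruple $E_2\to E_1\to E_1\to E_2$ around the $E_1\to E_1$ map, which your argument never produces. The phrase ``this quadruple and its periodic translates'' does not save you: translating the $1$-quadruple by the full period $2(n-1)$ does not land on the positions $2n-3,\dots,2n$, and in the cases $n=3,4$ where the two windows overlap they are still quadruples for a different $i$. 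Nor can these terms be pushed into case (1): for example $E_2\otimes e_{n-1} = e_{n-1}$ factors as $e_j\otimes D$ only with $j = n-1$, which satisfies the letter of the proposition but defeats its purpose, since $e_{n-1}$ does not cap $P_{n-1}$ and so the contractibility argument of corollary \ref{truncated FK dies on turnback} would fail. The paper treats this explicitly as the final sentence of its $i=n-1$ case. A secondary expository point: you only exhibit triples in the half of the period where the index of $E_m$ increases; since each $E_m$ with $1\leq m\leq n-1$ appears twice per period, you should note that the reversed windows $E_{m+2}\to E_{m+1}\to E_m$ on the other side are also $i$-triples (now with $e_{i+1}$ in place of $e_{i-1}$), as otherwise half the occurrences of each $E_m$ are unaccounted for.
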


The case (1) above when $j=i$ can be pictured by

$$\BPic{KFtop}\quad \cong \BPic{KFbottom}$$

Before giving the proof, we note that once $\FK_n$ is part of
the chain complex for the universal projector (defined further
below), both conclusions (1) and (2) above imply that all terms in
$(P_{n-1}\sqcup 1)\otimes \KF_n\otimes e_i$ may be contracted. In the case (1) this will
follow by the inductive contractibility of $P_{n-1}$ under turnbacks. The
contractibility in case (2) follows from the analysis of triples and
quadruples in section \ref{triples and quadruples}.

\begin{proof}
  The periodicity of the diagram $\KF_n$ ensures that inspecting the first
  $2n+1$ terms of $\KF_n\otimes e_i$ is sufficient.  Geometrically inclined
  readers are invited to prove the proposition by examining the illustration
  at the beginning of this section. Expanding $\KF_n$ allows us to write the first period of
  $\KF_n \otimes e_i$ as follows:

{\Small
  $$\begin{diagram}
1\otimes e_i & \rTo & e_{n-1}\otimes e_i & \rTo & (e_{n-1}\otimes e_{n-2})\otimes e_i &  \rTo &  \cdots & \rTo &  (e_{n-1}\otimes\cdots\otimes e_1)\otimes e_i & \\
 & & & & & & & & \dTo^2 \\
e_{n-1}\otimes e_i & \lTo^2 & e_{n-1}\otimes e_i &\lTo & \cdots & \lTo & (e_{n-1}\otimes\cdots\otimes e_2)\otimes e_i & \lTo & (e_{n-1}\otimes\cdots\otimes e_1)\otimes e_i
\end{diagram}$$
}

We have dropped the $q$-grading because it is implied by the requirement that
the first axiom (in definition \ref{universal projector def}) holds.  We write ``2'' above arrows in order to indicate
which maps are of $q$-degree $2$ and all other maps are given by saddles.

There are several cases to consider. The first two are boundary cases $i =
1$ and $i = n-1$ and the last is the generic case for $1 <i < n-1$.

\begin{enumerate}
\item If $i=1$ then consider $- \otimes e_1$. If $n-k > 2$ then because of
  the far commutativity relation,

$$(e_{n-1} \otimes e_{n-2} \otimes \cdots \otimes e_{n-k} ) \otimes e_1 \cong e_{1} \otimes (e_{n-1} \otimes e_{n-2} \otimes \cdots \otimes e_{n-k} )$$

$$\BPic{com1top} \quad\cong \BPic{com1bot}$$

The terms corresponding to $n-k = 1$ and $n-k = 2$ fit in the following four term sequence:

$$\begin{diagram}
\alpha \otimes e_1 & \rTo  &  \alpha\otimes e_1 \otimes e_1 & \rTo^2  & \alpha\otimes e_1 \otimes e_1  & \rTo & \alpha \otimes e_1,
\end{diagram}$$

\noindent
where $\alpha = e_{n-1} \otimes e_{n-2} \otimes \cdots \otimes e_{2} $. This sequence
forms a $1$-quadruple (see \ref{quadruple}).

\medskip

\item If $i = n-1$ then when $k > 2$ we have

$$(e_{n-1} \otimes e_{n-2} \otimes \cdots \otimes e_{n-k} ) \otimes e_{n-1} \cong e_{n-3} \otimes (e_{n-1} \otimes e_{n-2} \otimes \cdots \otimes e_{n-k} )$$

$$\BPic{com2bot} \quad\cong \BPic{com2top}$$

When $k\leq 2$, the first three terms form an $(n-1)$-triple (see \ref{triple})

$$\begin{diagram}
 1\otimes e_{n-1} &  \rTo  & e_{n-1}\otimes e_{n-1} & \rTo & e_{n-1}\otimes e_{n-2} \otimes e_{n-1}.\end{diagram}$$

After the first period there is an $(n-1)$-quadruple surrounding every other
degree $2$ map:

$$\begin{diagram}
 e_{n-1} \otimes e_{n-2} \otimes e_{n-1} & \rTo & e_{n-1}\otimes e_{n-1}&  \rTo^{2} & e_{n-1}\otimes e_{n-1} & \rTo & e_{n-1} \otimes e_{n-2} \otimes e_{n-1}
\end{diagram}$$

\item If $i \ne 1$ and $i \ne n-1$,
each term has the form

$$ (e_{n-1} \otimes e_{n-2} \otimes \cdots \otimes e_{n-k} ) \otimes e_i $$

for some $k$ such that $2\leq k < n-1$. Depending on $k$ there are several cases to consider,
\begin{enumerate}
\item If $n - k > i + 1$ then $e_i$ commutes with $e_j$ for all $j$, $n-k
  \leq j \leq n-1$ because of the far commutativity relation. It follows
  that

$$ (e_{n-1} \otimes e_{n-2} \otimes \cdots \otimes e_{n-k} ) \otimes e_i \cong e_i \otimes (e_{n-1} \otimes e_{n-2} \otimes \cdots \otimes e_{n-k} )  $$

This can be pictured in the same way as the other application of the far
commutativity relation in (1).

\item If $n - k < i - 1$ then similarly,

$$(e_{n-1} \otimes e_{n-2} \otimes \cdots \otimes e_{n-k} ) \otimes e_i \cong e_{i-2} \otimes (e_{n-1} \otimes e_{n-2} \otimes \cdots \otimes e_{n-k} )$$

$$\BPic{com3top}\quad \cong\BPic{com3bot}$$

\item The terms in which $n-k = i-1$, $n-k = i$ and $n-k = i+1$ form an
  $i$-triple. For instance,
$$\begin{diagram}
\cdots & \rTo &  \BPic{turnback1} & \rTo & \BPic{turnback2} & \rTo & \BPic{turnback3} & \rTo & \cdots
\end{diagram}$$
\end{enumerate}
\end{enumerate}
\end{proof}

Let $f_m$ be the $m$th map in the Frenkel-Khovanov sequence, $$f_m\co
\FK_n(m)\longrightarrow \FK_n(m+1)$$ (see definition \ref{fk
  sequence}). Recall that each map $f_m$ has $q$-degree equal to either $1$
or $2$. All degree $1$ maps are given by saddle cobordisms and the degree 2
maps are shown in the diagram at the beginning of section \ref{fk sequence
  section}.

\begin{corollary}\label{truncated FK dies on turnback}
Suppose $P_{n-1} \in \Kom(n-1)$ is an $n-1$st universal projector and let
$l\in {\mathbb N}$ be such that $f_l$ is a degree $1$ map.  Then each term
in the truncated sequence

$$f_l \big( (P_{n-1} \sqcup 1) \otimes \FK_{n,l-1} \big)$$

either is the projector $P_{n-1}$ capped with a turnback or is
contained in a triple or a quadruple.
\end{corollary}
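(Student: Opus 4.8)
The plan is to derive the corollary directly from Proposition \ref{turnbacks pass through prop}, applied term by term, with the only genuine work being a combinatorial check at the truncation boundary. Fix the standard generator $e_i$, $1\le i\le n-1$, that is being used as the turnback. For each $k$ with $0\le k\le l-1$, Proposition \ref{turnbacks pass through prop} says that $\FK_n(k)\otimes e_i$ either (1) is isomorphic to $e_j\otimes D$ for a Temperley--Lieb diagram $D\in\Cob(n)$ with $j\in\{i-2,i-1,i\}$, or (2) is one of the terms of an $i$-triple or $i$-quadruple inside the sequence $\FK_n\otimes e_i$ of Section \ref{triples and quadruples}. I would then tensor this dichotomy on the left with the complex $P_{n-1}\sqcup 1$; the two alternatives of the corollary correspond exactly to the two alternatives here.

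For alternative (1) I would first note, from the proof of Proposition \ref{turnbacks pass through prop}, that $j\le n-2$ always holds: when $i\le n-2$ this is automatic since $j\le i$, and when $i=n-1$ every instance of alternative (1) produces $j=n-3$ (the small-$k$ terms, which would otherwise give a larger $j$, are precisely the ones falling under alternative (2)). Since $j\le n-2$, the turnback $e_j$ is supported on the first $n-1$ strands, i.e. $e_j$ is the image of the generator $e_j\in\TL_{n-1}$ under the inclusion $-\sqcup 1\co\Cob(n-1)\to\Cob(n)$, so associativity of $\otimes$ gives
$$(P_{n-1}\sqcup 1)\otimes e_j\otimes D\;\cong\;\big((P_{n-1}\otimes e_j)\sqcup 1\big)\otimes D,$$
which is the projector $P_{n-1}$ capped by a turnback (tensored with the inert remainder). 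For alternative (2), the structure maps of an $i$-triple or $i$-quadruple are saddle cobordisms together with the single $q$-degree-two handle-difference map $A$ of Section \ref{quadruple}, none of which involves the extra strand carried along by $-\sqcup 1$; hence passing from $\FK_n\otimes e_i$ to $(P_{n-1}\sqcup 1)\otimes\FK_n\otimes e_i$ and forming the total complex reproduces, level by level in the $P_{n-1}$-direction, genuine $i$-triples and $i$-quadruples, so the term is contained in a triple or a quadruple.

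The step I expect to be the main obstacle is checking that the truncation at length $l-1$ does not amputate a triple or a quadruple, and this is exactly where the hypothesis that $f_l$ is a $q$-degree-one map is used. Because $\FK_n$, hence $\FK_n\otimes e_i$, is $2(n-1)$-periodic with its two degree-two maps at fixed offsets inside each period, the triples and quadruples occupy prescribed blocks of consecutive positions. I would tabulate these blocks within a single period for each of the three cases $i=1$, $i=n-1$, $1<i<n-1$ appearing in the proof of Proposition \ref{turnbacks pass through prop}, and verify that the degree-one hypothesis on $f_l$ forces the truncation to cut between two such blocks rather than through the interior of one, so that every triple or quadruple that meets $\FK_{n,l-1}$ appears in it in full. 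With that bookkeeping in place the corollary is immediate; the inductive hypothesis that $P_{n-1}$ is a universal projector is invoked only afterwards, when the first alternative is recognized as contractible via axiom (3) of Definition \ref{universal projector def}.
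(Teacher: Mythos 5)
Your proposal uses exactly the same route as the paper: the degree-one hypothesis makes $f_l$ a single saddle $-\otimes e_i$, and then Proposition~\ref{turnbacks pass through prop} is applied term by term. The paper's own proof is a one-liner saying precisely this; your two refinements---checking $j\le n-2$ so that the commutativity case really produces a turnback landing entirely inside $P_{n-1}$, and observing that the $P_{n-1}$-direction of the total complex does not disturb the triple/quadruple structure---are sound and make explicit what the paper leaves implicit.

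Where you diverge is the ``truncation boundary'' worry. The paper does not address it, and I think the reason is that it is not needed for the corollary \emph{as stated}: the degree-one hypothesis on $f_l$ is used only to rewrite $f_l(\cdot)$ as $(\cdot)\otimes e_i$ so that Proposition~\ref{turnbacks pass through prop} can be quoted, and the phrase ``is contained in a triple or a quadruple'' refers to a block of the ambient sequence $\FK_n\otimes e_i$, not to a block lying entirely inside the truncation. The stronger claim you want --- that the cut at position $l-1$ never falls in the interior of a block --- is something the \emph{downstream} use in Lemma~\ref{cfk structure} implicitly relies on, so your instinct is a good one; but you outline this verification (``I would tabulate $\ldots$ and verify $\ldots$'') without carrying it out. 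If you do want to include it, there is a cleaner mechanism than tabulating by cases: since $f_l=-\otimes e_i$ is by definition the saddle from $\FK_n(l)$ to $\FK_n(l+1)$, position $l$ of $\FK_n\otimes e_i$ is always the \emph{first} term of the $i$-triple or $i$-quadruple (or the first commutativity term after one), so position $l-1$ is necessarily either a commutativity term or the \emph{last} term of a block, and nothing is amputated. As submitted, though, the truncation paragraph is a plan, not a proof, and should either be tightened along these lines or dropped, since the corollary itself does not require it.
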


The proof of corollary \ref{truncated FK dies on turnback} follows from
proposition \ref{turnbacks pass through prop} since the map $f_l$ is assumed
to be a degree $1$ map, that is a saddle cobordism. Therefore the sequence
$f_l ( (P_{n-1} \sqcup 1) \otimes \FK_{n,l})$ equals $((P_{n-1} \sqcup 1)
\otimes \FK_{n,l})\otimes e_i$ for some $i$. \qed

The point of this corollary is that if $f_l \big( (P_{n-1} \sqcup 1) \otimes
\FK_{n,l} \big)$ is part of a chain complex, then it can be contracted. (In
the first case, the projector $P_{n-1}$ capped with a turnback is
contractible by axiom (1) of the universal projector $P_{n-1}$. In the
second case, each triple or quadruple is contractible according to the
analysis in sections \ref{triple}, \ref{quadruple}). This will play an
important role in the proof of the main theorem below.

\subsubsection{The homotopy projector} \label{homotopy projector section}
If an $n-1$st universal projector $P_{n-1}$ exists then corollary
\ref{truncated FK dies on turnback} shows that the Frenkel-Khovanov sequence
can be used to define a sequence which satisfies the axioms for an $n$th
universal projector (definition \ref{universal projector def}) up to
homotopy.

\begin{definition}\label{hty projector}
The $n$th \emph{homotopy projector} $HP_n : \Mb{N} \to \Kom(n)$ is the sequence defined by

\begin{align*}
  HP_1 &= 1\\
  HP_n &= \Tot\left( (P_{n-1} \sqcup 1) \otimes \KF_n \right)
\end{align*}

For a given length $l \geq 0$ the \emph{truncated homotopy projector} is
defined using the truncated Frenkel-Khovanov sequence:

$$ HP_{n,l} = (P_{n-1} \sqcup 1) \otimes \KF_{n,l} $$

\end{definition}

A picture of $HP_n$ is given at the beginning of this section. For each
$e_i$, $0< i < n$ by the above proposition $HP_n(k) \otimes e_i$ is either a
term containing $(HP_{n-1} \otimes e_j)\sqcup 1$ where $0<j<n-1$ or fits into
an $i$-triple or $i$-quadruple. If this were a chain complex then it would
be contractible by the lemmas of section \ref{homotopy lemmas}.

\subsection{Construction of the chain complex: fattening the FK sequence} \label{fattening fk section}

The remark at the end of section \ref{homotopy projector section} implies
that the sequence $HP_n$ (definition \ref{hty projector}) behaves like a
universal projector. However it is not a chain complex: the composition of
any two successive saddle maps is not zero (although it is not difficult to
see that all compositions are homotopic to zero).

In order to obtain a chain complex and so complete the proof of the main
theorem, we thicken the $\FK$ sequence by contractible pieces.
Specifically, we consider the truncated Frenkel-Khovanov
sequence $\FK_{n,l}$ of length $l$ and our construction is inductive in
$l$.

Let $P_{n-1} \in \Kom(n-1)$ be a chain complex representing the $n-1$st
universal projector. We will now define a chain complex $\CFK_{n,l}$
inductively in length $l$ using the maps $\{f_k\}_{0}^\infty$ of the
$\FK$-sequence. At each stage $\CFK_{n,l}$ is defined as either a two
term chain complex using $f_{l-1}$ (in case the following map $f_{l}$ has
$q$-degree $1$) or as a three term chain complex using $f_{l-1}$ and $f_{l}$
(in case $f_{l}$ has degree $2$).

\begin{definition}
\label{CFK definition}
Set $\CFK_{n,0} = P_{n-1} \sqcup 1$. For each $l>0$ the $q$-degree of $f_{l-1}$
is either $1$ or $2$. If $f_{l-1}$ has $q$-degree $1$ then set

$$\CFK_{n,l} =  \left\{
\begin{array}{ll}
\CFK_{n,l-1} \; \xrightarrow{f_{l-1}} \; q\, f_{l-1} \CFK_{n,l-1} &  \textnormal{ if } \deg_q(f_{l}) =1  \\
\CFK_{n,l-1} \; \xrightarrow{f_{l-1}} \; q\,  f_{l-1} \CFK_{n,l-1} \; \xrightarrow{f_{l}} \; q^3 \, f_{l} f_{l-1} \CFK_{n,l-1} &  \textnormal{ if } \deg_q(f_{l}) = 2\\
\end{array}
\right.
$$

Otherwise, the $q$-degree of $f_{l-1}$ is $2$ and we set

$$\CFK_{n,l} = \CFK_{n,l-1}.$$

In this second step we do not change the complex $\CFK_{n,l}$ after having
just used a degree 2 map in order to avoid a degree shift.

Here we follow the convention that $f_l(D\oplus D) = f_l(D) \oplus f_l(D)$,
$q^i(q^j D) = q^{i+j} D$ and $f_{l}(D) = D$ if $\deg_q(f_{l}) = 2$.
Note that the three term sequence in the last case is indeed a chain
complex, that is $f_{l} f_{l-1}=0$.

\end{definition}

The recursive step can be visualized as follows.

\begin{enumerate}
\item If $\deg_q(f_{l-1}) = 1$ and $\deg_q(f_{l}) =1$,

$$ \begin{diagram}
 \BPic{nlpic} & =  & \BPic{nlm1pic}  & \rTo^{f_{l-1}}  & q \BPic{nlm1sadpic}
\end{diagram} $$

\item If $\deg_q(f_{l}) =2$,

$$ \begin{diagram}
 \BPic{nlpic} & =  & \BPic{nlm1pic}  & \rTo^{f_{l-1}}  & q \BPic{nlm1sadpic} & \rTo^{f_{l}} & q^3 \BPic{nlm1sadpic}
\end{diagram} $$

\end{enumerate}

\newpage

Consider the chain complex $\CFK_{4,3}:$

\begin{diagram}[size=3em]
\CPic{c4id}  &  \rDotsto^{f_0}            &         &       &  \CPic{c4e3} &              &   \\
        & \rdTo^{f_1}            &         &       & \dLine^{f_2} & \rdDotsto^{f_1}     &  \\
\dTo^{f_2}    &                 &  \CPic{c4e2}  & \rTo^{f_0}  &        &              &  \CPic{c4e3e2}     \\
        &                  &         &       & \dTo   &              &        \\
\CPic{c4e1}       &   \rLine    & \dTo^{f_2}       & \rTo^{f_0}  &  \CPic{c4e3e1}  &  & \dDotsto^{f_2}  \\
        &   \rdTo^{f_1}          &         &       &        & \rdTo^{f_1}      &       \\
        &                  & \CPic{c4e2e1}  &       &        &  \rTo^{f_0}        & \CPic{c4e3e2e1}  \\
\end{diagram}

(To make the illustration more compact, only a part of the actual chain complex $\CFK_{4,3}$ is shown above. The actual 
$\CFK_{4,3}$ has another layer below the cube shown above.)
Recall that the truncated Frenkel-Khovanov sequence $\FK_{4,3}$ is given by

$$\begin{diagram}[center]
\CPic{c4id} & \rDotsto^{\!\!\!\!\! f_0} & \CPic{c4e3} & \rDotsto^{\!\!\!\!\!\! f_1} & \CPic{c4e3e2} & \rDotsto^{f_2} &\CPic{c4e3e2e1} &
\rDotsto^{f_3}  & \CPic{c4e3e2e1}
\end{diagram}
$$

Here we use dotted arrows to help the reader find the relevant
information. The first four terms of the sequence $\FK_{4,3}$ start in the
upper left hand corner of the cube, travel to the right then to the front
face of the cube and land in the lower right hand corner. These are precisely
the first four terms of the diagram pictured at the beginning of section
\ref{fk sequence section} together with four contractible terms.

The proofs contained in the remainder of this section are rooted in the
observation that $\CFK_{n,l}$ will always decompose as $\FK_{n,l}$ plus a
contractible subcomplex $K_l$ consisting of truncated Frenkel-Khovanov
sequences containing turnbacks.

\begin{lemma}\label{cfk structure} {(Structure of $\CFK_{n,l}$)}
For each $l\geq 0$ the chain complex $\CFK_{n,l}$ admits a decomposition,
\begin{equation} \label{cfk eq}
\CFK_{n,l} \cong (P_{n-1} \sqcup 1) \otimes (\FK_{n,l} \oplus K_l),
\end{equation}
where the second summand $(P_{n-1} \sqcup 1)\otimes K_l$ is contractible.

More specifically, the contractibility of $(P_{n-1} \sqcup 1)\otimes K_l$ is a consequence of
simultaneous Gaussian elimination of some of the terms in $K_l$,
so that each remaining term (in the notation of lemma \ref{sim gaussian elimination})
$(P_{n-1} \sqcup 1)\otimes C_i$ is contractible. Moreover, the off-diagonal component

$$\begin{diagram} (P_{n-1} \sqcup 1) \otimes K_l & \rTo^\beta & (P_{n-1} \sqcup 1) \otimes\FK_{n,l}\end{diagram}$$

of the $\CFK_{n,l}$ differential with respect to the
decomposition (\ref{cfk eq}) vanishes on the domain of the isomorphisms
underlying the simultaneous Gaussian elimination in $K_l$.
\end{lemma}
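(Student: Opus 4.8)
The plan is to prove Lemma \ref{cfk structure} by induction on $l$, tracking simultaneously (i) the decomposition (\ref{cfk eq}), (ii) which terms of $K_l$ have been paired off by simultaneous Gaussian elimination, and (iii) the vanishing of the off-diagonal map $\beta$ on the domains of those pairing isomorphisms. The base case $l=0$ is trivial: $\CFK_{n,0} = P_{n-1}\sqcup 1 = (P_{n-1}\sqcup 1)\otimes \FK_{n,0}$ with $K_0 = 0$. For the inductive step I would consider the three cases in Definition \ref{CFK definition} according to the $q$-degrees of $f_{l-1}$ and $f_l$. In each case $\CFK_{n,l}$ is built from $\CFK_{n,l-1}$ by appending one or two copies of the mapping-cone construction along $f_{l-1}$ (and possibly $f_l$), and I would use the inductive decomposition $\CFK_{n,l-1}\cong (P_{n-1}\sqcup 1)\otimes(\FK_{n,l-1}\oplus K_{l-1})$ to split the newly appended terms. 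The new top term $q\,f_{l-1}\CFK_{n,l-1}$ (and $q^3 f_l f_{l-1}\CFK_{n,l-1}$ when applicable) decomposes as $(P_{n-1}\sqcup 1)\otimes\big(q\,f_{l-1}(\FK_{n,l-1}\oplus K_{l-1})\big)$, and the piece coming from $\FK_{n,l-1}$ is exactly the new term(s) of $\FK_{n,l}$, while the piece coming from $K_{l-1}$ is adjoined to $K_l$. This establishes (\ref{cfk eq}).

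The heart of the argument is the contractibility of $(P_{n-1}\sqcup 1)\otimes K_l$ together with the $\beta$-vanishing statement. Here I would invoke Corollary \ref{truncated FK dies on turnback}: since the new saddle map $f_{l-1}$ (when $\deg_q(f_{l-1})=1$) applied to $(P_{n-1}\sqcup 1)\otimes\FK_{n,l-1}$ produces $((P_{n-1}\sqcup 1)\otimes\FK_{n,l-1})\otimes e_i$, every term that gets appended to $K_l$ is either the projector $P_{n-1}$ capped by a turnback or sits inside a triple or a quadruple. The triples and quadruples are precisely the configurations analyzed in Section \ref{triples and quadruples}, where after $-\otimes e_i$ and delooping the middle term(s), one extracts an identity isomorphism component (the ``$1$'' in the matrices $B$, or the isomorphism realizing relation (2)) and applies simultaneous Gaussian elimination via Lemma \ref{sim gaussian elimination}. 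I would make explicit the bookkeeping: the $A_i$ and $B_i$ terms eliminated by Lemma \ref{sim gaussian elimination} are the delooped components of the triple/quadruple middle terms, the isomorphisms $a_{2i}$, $e_{2i+1}$ are the identity components identified in Section \ref{triples and quadruples}, and the surviving $C_i$ are either zero or $(P_{n-1}\sqcup 1)$ capped by a turnback — contractible by axiom (3)(a)/(3)(b) of the universal projector $P_{n-1}$, via Lemma \ref{big collapse}. Thus $(P_{n-1}\sqcup 1)\otimes K_l\simeq 0$.

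The remaining claim — that $\beta$ vanishes on the domain of the pairing isomorphisms — is what I expect to be the main obstacle, and the reason the lemma is stated with this level of precision rather than just asserting contractibility. The subtlety is that the decomposition (\ref{cfk eq}) is only a decomposition of the \emph{objects}, not of the differential: the $\CFK_{n,l}$ differential may well have a nonzero component $\beta\colon (P_{n-1}\sqcup 1)\otimes K_l\to (P_{n-1}\sqcup 1)\otimes\FK_{n,l}$. The point to establish is that on those specific summands of $K_l$ that serve as the domains $A_{2i}$, $B_{2i+1}$ of the Gaussian-elimination isomorphisms, the map $\beta$ is zero, so that the Gaussian elimination inside $K_l$ can be performed \emph{without} disturbing the $\FK_{n,l}$ summand. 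This I would verify by a direct inspection of the cube structure of $\CFK_{n,l}$ illustrated by the $\CFK_{4,3}$ example: a term of $K_l$ arises as a $q$-shifted copy $q\,f_{l-1}$ or $q^3 f_l f_{l-1}$ applied to a term of $K_{l-1}$ or a turnback-capped term, and its only differentials into the $\FK_{n,l}$ layer would have to come from a horizontal $f_0$-type edge of the cube; the domains of the elimination isomorphisms are precisely the ``interior'' delooped summands, which by the explicit form of the matrices $B=\left(\begin{smallmatrix}-&1\\-&-\end{smallmatrix}\right)$ in Section \ref{quadruple} (and the analogous triple matrices) receive and emit maps only within the triple/quadruple, not across to $\FK_{n,l}$. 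Carefully matching the cube edges to the matrix entries, and confirming that the relevant off-diagonal blocks are supported away from the elimination domains, is the one genuinely delicate bookkeeping step; once it is in place, Lemma \ref{sim gaussian elimination} applies verbatim and the proof is complete.
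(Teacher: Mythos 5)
Your overall plan — induction on $l$, peel off $\FK_{n,l}$ and a complement $K_l$, then contract $K_l$ via Corollary~\ref{truncated FK dies on turnback} together with triples, quadruples, and Lemma~\ref{sim gaussian elimination} — is the paper's approach. But there is a genuine error in how you describe the inductive decomposition of the object $q\,f_{l-1}\CFK_{n,l-1}$, and it is not cosmetic: it is exactly where the content of the lemma lives.

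You write that in $q\,f_{l-1}\CFK_{n,l-1}\cong (P_{n-1}\sqcup 1)\otimes q\,f_{l-1}(\FK_{n,l-1}\oplus K_{l-1})$, the piece $q\,f_{l-1}\FK_{n,l-1}$ becomes ``the new term(s) of $\FK_{n,l}$'' while only $q\,f_{l-1}K_{l-1}$ is adjoined to $K_l$. This is not right. The sequence $q\,f_{l-1}\FK_{n,l-1}$ consists of $l$ objects; only its \emph{last} term, $f_{l-1}\FK_{n,l-1}(l-1) = \FK_{n,l}(l)$, is the new term in the Frenkel--Khovanov summand. The other $l-1$ terms form $q\,f_{l-1}\FK_{n,l-2}$, and they must go into $K_l$. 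The paper's decomposition is
\[
K_l \;=\; q\,f_{l-1}\FK_{n,l-2}\;\oplus\;K_{l-1}\;\oplus\;q\,f_{l-1}K_{l-1},
\]
and $q\,f_{l-1}\FK_{n,l-2}$ is precisely the piece whose contractibility is the point of Corollary~\ref{truncated FK dies on turnback}. Note that your own contractibility paragraph implicitly contradicts your decomposition: you there argue that the new terms of $K_l$ are produced by applying $f_{l-1}$ to $(P_{n-1}\sqcup 1)\otimes\FK_{n,l-1}$, which is the $q\,f_{l-1}\FK_{n,l-1}$ piece you had just assigned entirely to $\FK_{n,l}$.

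Two further points where you diverge from, or fall short of, the paper's argument. First, for the $\beta$-vanishing you propose a ``cube-edge'' inspection and an appeal to the matrices from Section~\ref{quadruple}; the paper's argument is simpler and worth knowing: the only new off-diagonal component introduced at step $l$ is $\beta_l$, the map from the \emph{last} term of the new row $q\,f_{l-1}\FK_{n,l-2}$ into $\FK_{n,l}(l)$, whereas the Gaussian-elimination isomorphisms (triple/quadruple middle terms) live on \emph{earlier} terms of that row, so the off-diagonal differential simply vanishes on their domains. Second, to run the simultaneous Gaussian elimination across the three summands $q\,f_{l-1}\FK_{n,l-2}$, $K_{l-1}$, and $q\,f_{l-1}K_{l-1}$ at once, one needs the combined matrix to remain invertible on the elimination domains; the paper gets this from the inductive $\beta$-vanishing hypothesis, which makes the relevant matrix block lower triangular. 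Your plan omits this step, and without it ``apply Lemma~\ref{sim gaussian elimination}'' is not yet justified.
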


\begin{proof}
The proof is by induction on $l$ using the recurrence defining
$\CFK_{n,l}$ (definition \ref{CFK definition}). If $l = 0$ or
$\deg_q(f_{l-1}) = 2$ then there is nothing to prove. If $\deg_q(f_{l})
\ne 2$ then $\CFK_{n,l}$ is defined as a two term sequence,

$$\begin{diagram} \CFK_{n,l} &=& \CFK_{n,l-1} &\rTo^{f_{l-1}} & q f_{l-1} \CFK_{n,l-1}\end{diagram}$$

By induction we may assume that

$$\CFK_{n,l-1} \cong (P_{n-1} \sqcup 1) \otimes (\FK_{n,l-1} \oplus K_{l-1})$$

where $K_{l-1} \simeq 0$ satisfies the conclusion of the lemma. We claim that there is a decomposition

\begin{equation}\label{cfk iso}
\CFK_{n,l} \cong (P_{n-1} \sqcup 1) \otimes \Big( \FK_{n,l} \, \oplus \, q f_{l-1} \FK_{n,
l-2} \, \oplus \, K_{l-1} \, \oplus \, q f_{l-1} K_{l-1}\Big)
\end{equation}

This can be observed by writing the most important part of the recursion
defining $\CFK_{n,l}$ in a helpful way.

$${\Small \begin{diagram}
\Bigg[ & \FK_{n,l-1}(0) &  \rTo^{f_0} &  \cdots & \rTo^{f_{l-3}} &  \FK_{n,l-1}(l-2) & \rTo^{f_{l-2}} & \FK_{n,l-1}(l-1) \Bigg] & \subset & \CFK_{n,l-1} \\
       &  \dTo^{f_{l-1}}    &      & \cdots &      &  \dTo^{f_{l-1}} &   & \dTo^{f_{l-1}} & & \dTo^{f_{l-1}} \\
\Bigg[ & f_{l-1} \FK_{n,l-1}(0)  & \rTo^{f_0} & \cdots & \rTo^{f_{l-3}} & f_{l-1} \FK_{n,l-1}(l-2) \Bigg]&  \rTo^{\beta_l} &  f_{l-1} \FK_{n,l-1}(l-1) & \subset & f_{l-1} \CFK_{n,l-1}\\
& & & & & & &
\| & &  &  & \\
& & & & & & & \FK_{n,l}(l) & &  &  & \\
\end{diagram}}$$

In this diagram the lower order projector $(P_{n-1} \sqcup 1)$ is omitted to
simplify the notation. The terms in the top row are $\FK_{n,l-1}$, the
truncated Frenkel-Khovanov sequence of length $l-1$. By the inductive
assumption, this sequence is a summand in $\CFK_{n,l-1}$ and the remaining
part - the contractible summand $K_{l-1}$ - is not included in the diagram.
The terms on the left in the bottom row are of the form
$f_{l-1}\FK_{n,l-2}$. Observe that by definition the last term $f_{l-1}
\FK_{n,l-1}(l-1)$ is equal to the next term $\FK_{n,l}(l)$ in the
Frenkel-Khovanov sequence. The $\FK_{n,l}$ summand in (2) is seen in the
diagram above as $\FK_{n,l-1}$ in the top row followed by the vertical map
$f_{l-1}$ to $f_{l-1}\FK_{n,l-1}(l-1)$. The equation (\ref{cfk iso}) follows
immediately.

To prove that $(P_{n-1} \sqcup 1)\otimes K_l$ is contractible, where
$$K_l := q f_{l-1} \FK_{n, l-2} \, \oplus \, K_{l-1} \, \oplus \, q f_{l-1}
K_{l-1},$$ further analysis of the differential $\CFK_{n,l}$ is necessary.
Note that $(P_{n-1} \sqcup 1) \otimes f_{l-1}\FK_{n,l-2}$ is contractible by
corollary \ref{truncated FK dies on turnback}, more precisely all terms in
this sequence are either contained in triples or quadruples or are
projectors capped by turnbacks. By the inductive hypothesis on the
off-diagonal entry of the differential in the statement of the lemma, when
the terms participating in the Gaussian eliminations in $K_{l-1}$, $q f_{l-1}
K_{l-1}$ and $q f_{l-1} \FK_{n,l-2}$ are grouped together, (in the notation of
lemma \ref{sim gaussian elimination}) the isomorphisms underlying the
Gaussian eliminations in the summands, $a_{2i} : A_{2i} \to A_{2i + 1}$ and
$e_{2i+1} : B_{2i+1} \to B_{2i+2}$ remain isomorphisms because the matrices
are lower triangular.  After removing all triples and quadruples, the
remaining chain complex consisting of contractible terms may be contracted
by lemma \ref{big collapse} (big collapse). This concludes the proof that
$(P_{n-1} \sqcup 1)\otimes K_l$ is contractible.

To propagate the inductive hypothesis on the differential, note that the
only new off-diagonal component, of the form in the statement of the lemma,
introduced during the inductive step, is the map ${\beta}_l$ in the diagram
above.  The Gaussian eliminations take place in the sequence $q f_{l-1}
\FK_{n,l-2}$ in the bottom row and since ${\beta}_l$ is defined on the last
term of that sequence, clearly the component of the differential on the
domain of the isomorphisms in the Gaussian eliminations is trivial.

The proof in the second case (when $\deg_q(f_l) = 2$) is almost exactly
the same. Instead of one row of contractible terms there are two new rows of
contractible terms. By definition,

$$\begin{diagram} \CFK_{n,l} &=& \CFK_{n,l-1} &\rTo^{f_{l-1}} & q f_{l-1} \CFK_{n,l-1} & \rTo^{f_l} & q^3 f_l f_{l-1} \CFK_{n,l-1}
\end{diagram}$$

Again by induction we may assume that

$$\CFK_{n,l-1} \cong (P_{n-1} \sqcup 1) \otimes (\FK_{n,l-1} \oplus\, K_{l-1})$$

and $C_{l-1} \simeq 0$. In this case the claim is that there is a decomposition

$$\CFK_{n,l} \cong (P_{n-1} \sqcup 1) \otimes ( \FK_{n,l} \oplus\, K_l )$$

where

$$K_l := q f_{l-1} \FK_{n, l-2} \oplus K_{l-1} \oplus q f_{l-1} K_{l-1} \oplus q^3 f_l f_{l-1} \FK_{n,l-1} \oplus q^3 f_l f_{l-1} K_{l-1} $$

Again this can be observed by writing the most important part of the
recursion as follows:

$${\Small \begin{diagram}
\Bigg[ & \FK_{n,l-1}(0) &  \rTo^{f_0} &  \cdots & \rTo^{f_{l-3}} &  \FK_{n,l-1}(l-2) & \rTo^{f_{l-2}} & \FK_{n,l-1}(l-1) \Bigg] & \subset & \CFK_{n,l-1} \\
       &  \dTo^{f_{l-1}}    &      & \cdots &      &  \dTo^{f_{l-1}} &   & \dTo^{f_{l-1}} & & \dTo^{f_{l-1}} \\
\Bigg[ & f_{l-1} \FK_{n,l-1}(0)  & \rTo^{f_0} & \cdots & \rTo^{f_{l-3}} & f_{l-1} \FK_{n,l-1}(l-2) \Bigg]&  \rTo^{\beta_l} &  f_{l-1} \FK_{n,l-1}(l-1) & \subset & f_{l-1} \CFK_{n,l-1}\\
       &  \dTo^{f_l}    &      & \cdots &      &  \dTo^{f_l} &   & \dTo^{f_l} & & \dTo^{f_l} \\
\Bigg[ & f_l f_{l-1} \FK_{n,l-1}(0)  & \rTo^{f_0} & \cdots & \rTo^{f_{l-3}} & f_l f_{l-1} \FK_{n,l-1}(l-2) \Bigg]&  \rTo^{\beta_{l+1}} &  f_l f_{l-1} \FK_{n,l-1}(l-1) & \subset & f_l f_{l-1} \CFK_{n,l-1}\\
\end{diagram} }$$

As in the previous case, the summands in $K_l$ are contractible.
\end{proof}

{\bf Remark.}  The proof of the lemma above used the recursive definition of
the chain complex $\CFK_{n,l}$. Completely expanding the recursion gives the
following decomposition:

$$\CFK_{n,l} = \bigoplus_I q^{l(I) + \tau(I)} f_I \FK_{n,l-l(I)}$$

where $I$ are $k$-tuples indexing maps in the sequence $\FK_n$, $l(I)$ is
the cardinality of $I$, $f_I = f_{i_1} \circ f_{i_2}\circ \cdots \circ
f_{i_{k}}$ when $I = (i_1, i_2, \ldots, i_k)$ and $\tau(I)$ is the number of
degree $2$ maps in $f_I$.  Moreover, if $f_m$ is the differential of $\FK_n$
then the differential of in each summand, $f_I \FK_{n,l-l(I)}$, is
$f_I(f_m)$.  Each summand (except for $I=\emptyset$) is contractible by
corollary \ref{truncated FK dies on turnback}. In the notation of lemma
\ref{cfk structure}, $K_l=\bigoplus_{I\neq \emptyset} q^{l(I) + \tau(I)} f_I
\FK_{n,l-l(I)}$ and the summand $\FK_{n,l}$ corresponds to $I=\emptyset$.

The following statement is important for establishing the properties of a
universal projector:

\begin{lemma} {(Contractibility under turnbacks)} \label{turnbacks lemma}\nl
Let $n > 2$, $l \geq 0$ and $j\in\{1, \ldots, n-1\}$. Then all terms in the
chain complex
$$\CFK_{n,l} \otimes e_j$$ may be contracted, except possibly for the $l$th
term,

$$(P_{n-1} \sqcup 1) \otimes \FK_{n,l}(l)$$
\end{lemma}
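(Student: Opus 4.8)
The plan is to feed the structural decomposition of $\CFK_{n,l}$ from Lemma~\ref{cfk structure} into the term-by-term analysis of $\FK_n\otimes e_j$ supplied by Proposition~\ref{turnbacks pass through prop} and Corollary~\ref{truncated FK dies on turnback}. First I would apply the additive functor $-\otimes e_j$ to the fully expanded decomposition in the remark following Lemma~\ref{cfk structure}, so that $\CFK_{n,l}\otimes e_j$ has underlying graded object $\bigoplus_I q^{l(I)+\tau(I)}\,(P_{n-1}\sqcup 1)\otimes f_I\FK_{n,l-l(I)}\otimes e_j$. For $I\neq\emptyset$ the block $(P_{n-1}\sqcup 1)\otimes f_I\FK_{n,l-l(I)}$ is already contractible by Corollary~\ref{truncated FK dies on turnback}. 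For the block $I=\emptyset$, namely $(P_{n-1}\sqcup 1)\otimes\FK_{n,l}\otimes e_j$, Proposition~\ref{turnbacks pass through prop} says that each term $\FK_n(k)\otimes e_j$ with $0\le k\le l$ either has the form $e_{j'}\otimes D$ for a turnback $e_{j'}$ on the first $n-1$ strands --- whence $(P_{n-1}\sqcup 1)\otimes\FK_n(k)\otimes e_j\cong\big((P_{n-1}\otimes e_{j'})\sqcup 1\big)\otimes D$ is contractible by the inductive contractibility of $P_{n-1}$ under turnbacks (axiom (3) of Definition~\ref{universal projector def}) --- or else sits inside a triple or quadruple in the sense of Section~\ref{triples and quadruples}.

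Next I would collapse everything at once. After delooping the middle terms of each triple and quadruple (Remark~\ref{deloop}), one assembles all of the resulting identity isomorphisms --- those internal to the $I\neq\emptyset$ blocks, those produced by the triples and quadruples inside the $I=\emptyset$ block, and those witnessing contractibility of the turnback-capped copies of $P_{n-1}$ --- and performs a single application of Lemma~\ref{sim gaussian elimination} (simultaneous Gaussian elimination) to $\CFK_{n,l}\otimes e_j$. The point to verify is that these maps remain isomorphisms once the entire differential of $\CFK_{n,l}\otimes e_j$ is taken into account, and this is precisely where the refined clause of Lemma~\ref{cfk structure} is used: the off-diagonal component $\beta$ vanishes on the domains of the isomorphisms underlying the Gaussian eliminations inside $K_l$, and, together with the lower-triangular shape of the matrices, this survives $-\otimes e_j$. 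After the elimination only contractible residual blocks remain, and Lemma~\ref{big collapse} (big collapse) completes the contraction. The one term that escapes this process is the terminal term $(P_{n-1}\sqcup 1)\otimes\FK_{n,l}(l)$: by the $2(n-1)$-periodicity of the Frenkel-Khovanov sequence (Definition~\ref{fk sequence}) and the recursion defining $\CFK_{n,l}$ (Definition~\ref{CFK definition}), the partner of $\FK_n(l)\otimes e_j$ in its triple or quadruple --- or the target of the degree-$2$ map issuing from it --- carries index $>l$ and has therefore been discarded in the truncation, so this term need not be paired with anything.

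\textbf{The main obstacle} I anticipate is organizational rather than conceptual: one must make the various cancellations mutually compatible, i.e. check that the delooping and identity isomorphisms used to cancel the triples, quadruples and turnback terms in the $\FK_{n,l}$ block, together with those used in Lemma~\ref{cfk structure} to cancel $K_l$, can be taken simultaneously as the isomorphisms $a_{2i},e_{2i+1}$ of one application of Lemma~\ref{sim gaussian elimination}. The case division of Proposition~\ref{turnbacks pass through prop} is what guarantees that the triples, quadruples and turnback terms of index $<l$ partition the terms without overlap, and the $\beta$-vanishing clause of Lemma~\ref{cfk structure}, transported through $-\otimes e_j$, is what keeps the relevant matrix entries invertible; the residual task of identifying exactly which single term survives, according to the residue of $l$ modulo the period, is then a routine unwinding of Definition~\ref{CFK definition}.
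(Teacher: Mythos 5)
Your proposal is correct and follows essentially the paper's route: decompose $\CFK_{n,l}\otimes e_j$ via Lemma~\ref{cfk structure}, dispose of the $K_l\otimes e_j$ part by its inherited contractibility, classify the $\FK_{n,l}\otimes e_j$ terms via Proposition~\ref{turnbacks pass through prop}, and assemble the cancellations by one simultaneous Gaussian elimination protected by the $\beta$-vanishing clause. The only cosmetic difference is that you expand $K_l$ into the $\bigoplus_{I\neq\emptyset}$ form from the remark rather than treating it as a single contractible summand, and you spell out why the $l$th term may survive, both of which the paper leaves implicit by citing the inductive step of Lemma~\ref{cfk structure}.
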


\begin{proof}
By lemma \ref{cfk structure},
$$\CFK_{n,l} \otimes e_j  \cong \Big( (P_{n-1} \sqcup 1) \otimes (\FK_{n,l} \otimes e_j) \Big) \oplus \Big( (P_{n-1} \sqcup 1) \otimes (K_l \otimes e_j) \Big)$$
Application of $-\otimes e_j$ does not change the contractibility of the
the second summand.
By proposition \ref{turnbacks pass through prop} all of the terms (besides possibly the last, depending on $j$)
in the first summand are either projectors $P_{n-1}$ capped by
turnbacks or contained in triples and quadruples. The rest of the proof is identical to the
proof of the inductive step in lemma \ref{cfk structure}.
\end{proof}

As a consequence of lemma \ref{cfk structure} we have the following definition.

\begin{definition}{(The truncated projector $P_{n,l}$)}
Contracting $K_l \subset \CFK_{n,l}$ yields a homotopy equivalence

$$\begin{diagram} \CFK_{n,l} & \rTo  & P_{n,l} \end{diagram}$$

onto a \emph{chain complex} $P_{n,l}$ that consists of the first $l$ terms
of the Frenkel-Khovanov sequence pictured at the beginning of section \ref{fk
  sequence section}.
\end{definition}

Note that the chain complex $P_{n,l}$ may be thought of as a completed
version of the truncated homotopy chain complex $HP_{n,l}$ (definition
\ref{hty projector}). Given a homotopy chain complex there is a standard
obstruction theoretic approach to constructing a chain complex in which new
components corresponding to nullhomotopies and Massey products of
nullhomotopies are added to the differential (see \cite[2.10]{gelfandmanin}). The extra maps in $P_{n,l}$ are precisely those corresponding to
these homotopies and Massey products. Our axioms (definition \ref{universal
  projector def}) guarantee that any such choice of Massey products yields a
unique chain complex up to homotopy.

The universal projector $P_n$ will be defined as the limit of $P_{n,l}$ as
$l\rightarrow\infty$.  Its contractibility under turnbacks (to show that
$P_n$ satisfies the axioms of a universal projector in definition
\ref{universal projector def}) follows from lemma \ref{turnbacks lemma}.
The remaining property, ensuring that the limit exists, is the ``stability''
of the sequence $\{ P_{n,l}\}$, proved in the following proposition
\ref{stability}.

We now show that the chain complex $P_{n,l+1}$ is obtained from the chain
complex $P_{n,l}$ by adding the next term in the picture at the beginning of
section \ref{fk sequence section} and only adding maps to the differential
from the old terms to the new term. The maps between those terms in
$P_{n,l+1}$ which come from $P_{n,l}$ are exactly the same as the maps
between terms in $P_{n,l}$. We may conclude from this together with the
previous proposition that there is a chain complex $P_n = P_{n,\infty}$
which is a universal projector.

\begin{proposition}{(Stability of construction)} \label{stability}
The inclusion

$$P_{n,l} \hookrightarrow P_{n,l+1} $$

is an isomorphism onto its image. Moreover,

\begin{equation*}\label{pnl decom}
P_{n,l+1} \cong P_{n,l} \oplus \big ((P_{n-1} \sqcup 1) \otimes \FK_{n,l}(l) \big).
\end{equation*}

$d_{P_{n,l+1}}$ is lower triangular with respect to this decomposition and
$d_{P_{n,l+1}}|_{P_{n,l}} = d_{P_{n,l}}$.
\end{proposition}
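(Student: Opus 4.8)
The plan is to induct on $l$, in lockstep with the recursive Definition \ref{CFK definition} of $\CFK_{n,l}$ and with the structure Lemma \ref{cfk structure}. Recall that, by definition, $P_{n,l}$ is the chain complex obtained from $\CFK_{n,l}$ by carrying out the simultaneous Gaussian elimination of Lemma \ref{sim gaussian elimination} on the contractible summand $(P_{n-1}\sqcup 1)\otimes K_l$ supplied by Lemma \ref{cfk structure}; its terms are $(P_{n-1}\sqcup 1)$ tensored with the diagrams $\FK_n(0),\dots,\FK_n(l)$ of the Frenkel--Khovanov sequence, and its differential is assembled from the saddle maps $f_m$ together with the Massey corrections produced by the elimination. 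When $\deg_q(f_l)=2$ the recursion gives $\CFK_{n,l+1}=\CFK_{n,l}$, so $P_{n,l+1}=P_{n,l}$ and the assertion is immediate; hence assume $\deg_q(f_l)=1$, so that $\CFK_{n,l+1}$ is the two-term complex $\CFK_{n,l}\to q\,f_l\,\CFK_{n,l}$ with differential component $f_l$ (a third column $q^3 f_{l+1}f_l\CFK_{n,l}$ is adjoined if $\deg_q(f_{l+1})=2$).

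First I would feed the inductive decomposition $\CFK_{n,l}\cong(P_{n-1}\sqcup 1)\otimes(\FK_{n,l}\oplus K_l)$ into this recursion and distribute $f_l$ over it. Exactly as in the proof of Lemma \ref{cfk structure}, this yields
$$\CFK_{n,l+1}\cong(P_{n-1}\sqcup 1)\otimes\bigl(\FK_{n,l+1}\ \oplus\ K_{l+1}\bigr),\qquad K_{l+1}=q\,f_l\,\FK_{n,l-1}\ \oplus\ K_l\ \oplus\ q\,f_l\,K_l$$
(with the evident extra summands if $\deg_q(f_{l+1})=2$), where $\FK_{n,l+1}$ is $\FK_{n,l}$ with the single new diagram $f_l\FK_{n,l}(l)=\FK_n(l+1)$ appended. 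Here Corollary \ref{truncated FK dies on turnback} identifies $(P_{n-1}\sqcup 1)\otimes f_l\FK_{n,l-1}$ as contractible, its terms being projectors $P_{n-1}$ capped by turnbacks or lying in triples or quadruples, while $K_l$ and $f_l K_l$ are contractible by induction; so $K_{l+1}$ again satisfies the conclusion of Lemma \ref{cfk structure}, and the simultaneous Gaussian elimination collapsing $(P_{n-1}\sqcup 1)\otimes K_{l+1}$ breaks up into the eliminations already performed inside $K_l$ plus new eliminations confined to the summands $q\,f_l\FK_{n,l-1}$ and $q\,f_l K_l$.

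The heart of the matter is then to check that enlarging the elimination from $K_l$ to $K_{l+1}$ does not alter the maps among the terms already present in $P_{n,l}$. Two points: (i) the isomorphisms $a_{2i},e_{2i+1}$ underlying the old eliminations in $K_l$ are still isomorphisms inside $\CFK_{n,l+1}$, because the only new differential components touching the terms of $\CFK_{n,l}$ are the instances of $f_l$ and these land in the ``downstream'' summands $q\,f_l(\FK_{n,l}\oplus K_l)$, leaving the relevant matrices lower triangular precisely as in the proof of Lemma \ref{cfk structure}; and (ii) the Gaussian-elimination correction $\eta-\mu\varphi^{-1}\lambda$ attached to each old elimination computes the same map in $\CFK_{n,l+1}$ as in $\CFK_{n,l}$, since $\varphi,\lambda,\mu$ record only differential components among old terms, which are unchanged. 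Therefore the surviving differential on the $P_{n,l}$ summand of $P_{n,l+1}$ equals $d_{P_{n,l}}$; the only genuinely new contribution to $d_{P_{n,l+1}}$ is the component carrying $P_{n,l}$ into the new term $(P_{n-1}\sqcup 1)\otimes\FK_n(l+1)$, coming from $f_l$ and its Massey corrections; and this yields both the asserted direct-sum decomposition of the underlying graded object and the lower-triangularity of $d_{P_{n,l+1}}$ with $d_{P_{n,l+1}}|_{P_{n,l}}=d_{P_{n,l}}$. To keep the induction running I would finally re-verify the off-diagonal vanishing clause of Lemma \ref{cfk structure}: the only new off-diagonal components are the maps $\beta_l$ (and $\beta_{l+1}$) appearing in the recursion diagram of that lemma, each defined on the last term of a contractible sequence $q\,f_l\FK_{n,l-1}$, hence trivial on the domain of the isomorphisms used in the new eliminations --- word for word as there. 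The map $P_{n,l}\hookrightarrow P_{n,l+1}$ is then the inclusion of the first summand of this decomposition, manifestly an isomorphism onto its image.

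I expect the main obstacle to be exactly the bookkeeping in (i)--(ii): verifying that the ``forward-only'' character of the saddle maps $f_m$ and of the Massey corrections is preserved, so that passing from stage $l$ to stage $l+1$ is a pure extension rather than a modification of the earlier stages. This is not conceptually hard, but it is the one place where the carefully engineered off-diagonal vanishing statement of Lemma \ref{cfk structure} is essential, and where one must order the summands of $K_{l+1}$ so that Lemma \ref{sim gaussian elimination} applies with the stated isomorphisms; in effect the proof is just the observation that this ordering, together with the triangularity, is inherited from stage $l$.
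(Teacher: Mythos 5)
Your proposal is correct and follows essentially the same route as the paper's own proof, which is much terser: the paper simply points at "an analogue of the first commutative diagram in the proof of Lemma~\ref{cfk structure}," draws the two-row picture with vertical $f_{l-1}$ maps, and observes that contracting the contractible lower-left corner leaves the top row untouched. What you add is exactly the bookkeeping the paper leaves implicit: (i) that the new differential components introduced at stage $l+1$ all point ``forward'' into $q f_l(\FK_{n,l}\oplus K_l)$, so the lower-triangular structure persists and the old isomorphisms $a_{2i},e_{2i+1}$ remain isomorphisms; and (ii) that the Gaussian-elimination corrections $\eta-\mu\varphi^{-1}\lambda$ on the surviving $P_{n,l}$ part are computed from data unchanged by the extension. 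You also correctly identify the role of the off-diagonal vanishing clause in Lemma~\ref{cfk structure} as the mechanism that makes the stage-to-stage extension a genuine extension rather than a modification.

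One small caveat. Your opening dismissal of the case $\deg_q(f_l)=2$ (``$\CFK_{n,l+1}=\CFK_{n,l}$, so $P_{n,l+1}=P_{n,l}$ and the assertion is immediate'') is not literally consistent with the statement being proved: if $P_{n,l+1}=P_{n,l}$, the asserted decomposition $P_{n,l+1}\cong P_{n,l}\oplus\bigl((P_{n-1}\sqcup 1)\otimes\FK_{n,l}(l)\bigr)$ would force the second summand to vanish, which it does not. What actually happens is that the three-row branch of Definition~\ref{CFK definition} adjoins two consecutive $\FK$-terms at the prior stage, so the content of the proposition at a degree-2 step is absorbed into the preceding step rather than being vacuous. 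The paper's own proof sidesteps this by drawing only the degree-1 picture, so this imprecision is inherited rather than introduced; still, you should flag that the degree-2 case reduces to the degree-1 analysis at the step where both terms are adjoined, rather than being trivially true.
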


\begin{proof}
This follows from an analogue of the first commutative diagram in the proof
of proposition \ref{cfk structure}. $P_{n,l+1}$ is obtained from $P_{n,l}$
by

$${\Small \begin{diagram}
\Bigg[ & P_{n,l}(0) &  \rTo^{d_0} &  \cdots & \rTo^{d_{l-3}} &  P_{n,l}(l-2) & \rTo^{d_{l-2}} & P_{n,l}(l-1) \Bigg] \\
       &  \dTo^{f_{l-1}}    &      & \cdots &      &  \dTo^{f_{l-1}} &   & \dTo^{f_{l-1}} \\
\Bigg[ & f_{l-1} P_{n,l}(0)  & \rTo^{d_0} & \cdots & \rTo^{d_{l-3}} & f_{l-1} P_{n,l}(l-2) \Bigg]&  \rTo^{\beta_l} &  f_{l-1} P_{n,l}(l-1) \\
\end{diagram}}$$

For the sake of clarity we have omitted from the diagram the parts of the
differential $d_{P_{n,l}}$ between non-consecutive terms.  The terms in the
lower lefthand corner are again contractible. Contracting them does not
change the maps $d_i$ along the top row.
\end{proof}

\subsection{A Doubling Construction}
In the proof of the main theorem we only concerned ourselves with what could
be called right contractibility or the statement that for $C_* \in\Kom(n)$
and $0< i < n$,

$$ C_* \otimes e_i \simeq 0 $$

If $C_*$ is right contractible then define $\bar{C}_* \in\Kom(n)$ to be the
chain complex in which each diagram and morphism is flipped upside down. Now
define a new chain complex $D_*$ by

$$ D_* = \bar{C}_* \otimes C_* $$

The contractibility of $D_*$ by turnbacks on both sides now follows from
that of $C_*$ on one side. The first two axioms of the universal projector
are satisfied by $D_*$ provided that they are satisfied by $C_*$.

\bibliographystyle{alpha}  
\bibliography{jwspinnet}  

\end{document}